\newcommand{\al}{\alpha}
\newcommand{\la}{\lambda}
\newcommand{\A}{\mathbb A}
\newcommand{\C}{\mathbb C}
\newcommand{\FF}{\mathbb F}
\newcommand{\PP}{\mathbb P}
\renewcommand{\P}{\mathbb P}
\newcommand{\ZZ}{\mathbb Z}
\newcommand{\F}{\mathbb F}
\newcommand{\Q}{\mathbb Q}
\newcommand{\Z}{\mathbb Z}
\newcommand{\D}{\mathbb D}
\newcommand{\cO}{\mathcal O}
\newcommand{\cB}{\mathcal B}
\newcommand{\cD}{\mathcal D}
\newcommand{\cM}{\mathcal M}
\newcommand{\cY}{\mathcal Y}
\newcommand{\cV}{\mathcal V}
\newcommand{\cU}{\mathcal U}
\newcommand{\Xtilde}{\widetilde X}
\newcommand{\Ytilde}{\widetilde Y}
\newcommand{\Ztilde}{\widetilde Z}
\newcommand{\Vtilde}{\widetilde V}
\newcommand{\Sthree}{\mathfrak{S}_3}
\newcommand{\eps}{\varepsilon}
\DeclareMathOperator{\Fix}{Fix}
\DeclareMathOperator{\Proj}{Proj}
\newtheorem{thm}{Theorem}
\newtheorem{cor}[thm]{Corollary}
\newtheorem{lem}[thm]{Lemma}
\newtheorem{prop}[thm]{Proposition}
\theoremstyle{definition}
\theoremstyle{remark}
\newtheorem{rem}[thm]{Remark}
\theoremstyle{remark}
\numberwithin{equation}{section}
\numberwithin{thm}{section}
\title{On $\ZZ/3$-Godeaux surfaces}
\author{Stephen Coughlan \and Giancarlo Urz\'ua}
\newcommand{\Addresses}{{
  \bigskip
  \footnotesize

  S.~Coughlan, \textsc{Institut f\"ur Algebraische Geometrie, Leibniz Universit\"at Hannover, Welfengarten 1, D-30167 Hannover, Germany.}

\textit{Current address:} \textsc{Mathematisches Institut, Lehrstuhl Mathematik VIII, Universit\"atsstrasse 30, D-95447 Bayreuth, Germany.} \par\nopagebreak
  \par\nopagebreak
  \textit{E-mail address:} \texttt{stephencoughlan21@googlemail.com}

  \medskip

  G.~Urz\'ua, \textsc{ Facultad de Matem\'aticas, Pontificia Universidad Cat\'olica de Chile, Campus San Joaqu\'in, Avenida Vicu\~na Mackenna 4860, Santiago, Chile.}

  \textit{Current address:} \textsc{Department of Mathematics and Statistics, University of Massachusetts, 710 N. Pleasant Street, Amherst, MA 01003-9305, USA.} \par\nopagebreak
  \textit{E-mail address:} \texttt{gianurzua@gmail.com}

}}
\date{\today}
\begin{document}
\maketitle

\begin{abstract}
We prove that Godeaux--Reid surfaces with torsion group $\ZZ/3$ have topological fundamental group $\ZZ/3$. For this purpose, we describe degenerations to stable KSBA surfaces with one $\frac14(1,1)$ singularity, whose minimal resolution are elliptic fibrations with two multiplicity $3$ fibres and one $I_4$ singular fibre. We study special such degenerations which have an involution, describing the corresponding Campedelli double plane construction. We also find some stable rational degenerations, some of which have more singularities, and one of which has a single $\frac19(1,2)$ singularity, the minimal possible index for such a surface. Finally, we do the analogous study for the Godeaux surfaces with torsion $\ZZ/4$.
\end{abstract}
\setcounter{tocdepth}{1}
\tableofcontents

\section{Introduction}
Godeaux surfaces are surfaces of general type with $p_g=q=0$ and $K^2=1$. Such surfaces have been a classical object of study for a long time. We refer to \cite{BCP} and \cite{Dolg} for surveys of differing vintage. In \cite{Miy}, Miyaoka showed among other things, that Godeaux surfaces have cyclic torsion group $H^2(X,\ZZ)_{\text{tors}}$ of order at most five, and Reid \cite{R} gave explicit constructions of the surfaces and their moduli spaces when the torsion has order $3,4$ or $5$. In the two latter cases, it is clear from Reid's construction that the topological fundamental group is isomorphic to the torsion group, but the fundamental group in the $\Z/3$ case has not been computed before. In this article, we refer to such surfaces as Godeaux--Reid surfaces, (or simply $\ZZ/3$-Godeaux surfaces). Our main result is that the topological fundamental group of the Godeaux--Reid surface is indeed $\Z/3$.

We consider KSBA stable degenerations \cite{KSB} of certain Godeaux--Reid surfaces with an involution. Specifically, we study a boundary divisor corresponding to Godeaux surfaces with a single $\frac14(1,1)$ singularity. Those with an involution have a resolution of singularities as described in the abstract above, and we can compute the topological fundamental group explicitly.

\begin{thm} The topological fundamental group of the Godeaux--Reid surface is $\Z/3$.
\end{thm}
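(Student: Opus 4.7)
The plan is to exploit the fact that $\pi_1(X)^{\mathrm{ab}}=H_1(X,\Z)=\Z/3$ (since $p_g=q=0$ and the torsion group is $\Z/3$), so the theorem reduces to showing that $\pi_1(X)$ is abelian; equivalently $|\pi_1(X)|\le 3$. By Reid's description the moduli of Godeaux--Reid surfaces is irreducible, and by Ehresmann's theorem the topological fundamental group is constant in a smooth proper family, so it suffices to compute $\pi_1$ for a single, carefully chosen Godeaux--Reid surface.

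I would pick $X$ to be a $\Q$-Gorenstein smoothing of a stable KSBA surface $X_0$ with a single $\frac14(1,1)$ singularity whose minimal resolution $\pi:\Xtilde_0\to X_0$ is an elliptic fibration $\Xtilde_0\to\PP^1$ with two multiplicity-$3$ fibres and one $I_4$ fibre, and moreover one admitting an involution $\iota$ with Campedelli double plane presentation of $X/\iota$; construction of such $X_0$ with its involution is the content of the rest of the paper. Topologically $X$ is obtained from $X_0$ by replacing a regular neighbourhood $B$ of the singular point with the Milnor fibre $M$ of the Wahl smoothing, so by Van Kampen
$$
\pi_1(X)\;=\;\pi_1(X_0\setminus B)\;*_{\pi_1(L)}\;\pi_1(M),
$$
where $L$ is the link of the singularity, a lens space $L(4,1)$ with $\pi_1(L)=\Z/4$, and $\pi_1(M)$ is the known cyclic fundamental group of the rational homology ball Milnor fibre of a Wahl singularity.

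The complement $X_0\setminus B$ deformation retracts to $\Xtilde_0\setminus E$, where $E$ is the exceptional $(-4)$-curve over the singular point. From the elliptic fibration structure, the orbifold base $(\PP^1;3,3)$ contributes a $\Z/3$ factor, while the $I_4$ fibre and the meridian loop around $E$ add further generators and relations; after amalgamating with $\pi_1(M)$ across $\pi_1(L)=\Z/4$ these additional elements should collapse, leaving exactly $\Z/3$. The hard part will be the explicit bookkeeping of this Van Kampen presentation and verifying that no unexpected elements of $\pi_1(X)$ survive. Here the involution $\iota$ is essential: the Campedelli description $X/\iota\to\PP^2$ with its explicit branch curve gives an alternative, geometric handle on $\pi_1(X)$ via the fundamental group of the complement of the branch locus in $\PP^2$, and matching the two pictures is what should ultimately force $\pi_1(X)=\Z/3$ rather than a larger group.
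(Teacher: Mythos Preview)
Your overall strategy---degenerate to the stable surface $X_0$ with a $\frac14(1,1)$ point, resolve to the Dolgachev $(3,3)$ surface $\Xtilde_0$ with $\pi_1=\Z/3$, and run Seifert--van Kampen across the Milnor fibre---is exactly the paper's framework. But you have misidentified the mechanism that makes the computation go through, and your proposed fallback via the Campedelli double plane is not what closes the argument.

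The gap is this: amalgamating over $\pi_1(L)$ with $\pi_1(M)$ only imposes that the meridian $\gamma$ around $E$ has finite order (the order coming from $\pi_1(M)$), not that it is trivial. If $\gamma$ were merely of order $2$ in the amalgam, you would have no control over the result. What the paper actually uses is a purely geometric fact about the $I_4$ fibre: one of its four $\PP^1$-components meets the $6$-section $E$ transversally at \emph{exactly one} point. That component, with the intersection point removed, is a disc in $\Xtilde_0\setminus E$ bounding a meridian of $E$, so $\gamma$ is already trivial in $\pi_1(\Xtilde_0\setminus E)$. Hence $\pi_1(\Xtilde_0\setminus E)=\pi_1(\Xtilde_0)=\Z/3$, and the amalgamation with $\pi_1(M)$ adds nothing. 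The role of the involution is precisely to force this $I_4$ configuration with the correct incidence to $E$ (Proposition~\ref{prop!I4-fibre}); it is \emph{not} used via the Campedelli branch-curve picture, which is a separate result and would be far harder to exploit for $\pi_1$.

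So: keep your van Kampen setup, but replace the vague ``elements should collapse after amalgamation'' and the Campedelli detour with the single-point intersection of an $I_4$ component with $E$. That is the missing idea.
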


The presence of an involution, which was only recently discovered by Reid \cite{God3}, is crucial in the configuration of the singular fibres of the elliptic fibration, which in turn is important for the computation of the fundamental group. We also analyse the quotient, describing the Godeaux surface as a Campedelli double plane, see \cite{CCML} and \cite{KL}.

\begin{thm} The Godeaux--Reid surface with an involution is a Campedelli double plane. That is, the quotient gives a double cover of $\PP^2$ branched in a curve of degree 10 with five $(3,3)$ points and one 4-point.
\end{thm}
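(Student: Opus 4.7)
My plan is to analyse the quotient $Y := X/\sigma$ of the Godeaux--Reid surface $X$ by the involution $\sigma$, to prove $Y$ is rational, and to identify an explicit birational contraction of (the minimal resolution of) $Y$ onto $\PP^2$ that exhibits $X$ as the asserted Campedelli double plane.

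The first step is to determine the fixed locus $\Fix(\sigma) \subset X$. Because $X$ is a smooth surface of general type with $p_g = q = 0$ and $K_X^2 = 1$, the topological Lefschetz fixed point formula together with the holomorphic Lefschetz formula applied to $\chi(\cO_X)^\sigma = 1$ constrain the number of isolated fixed points and the invariants $\bigl(g(R), R^2\bigr)$ of any smooth fixed curve $R$. To pin these down, I would exploit the previous theorem: $\sigma$ descends to the degenerate surface with its single $\tfrac14(1,1)$ singularity, and on the minimal resolution $\widetilde X$ it must preserve the elliptic fibration, act by a prescribed symmetry on the two multiple fibres and on the $I_4$ fibre, and preserve the $(-4)$-curve over the quotient singularity. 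Reading $\Fix(\sigma)$ off $\widetilde X$ and tracking it through the smoothing to $X$ yields the structure of the fixed locus, in particular $g(R)$ and $R^2$.

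Next I would verify that $Y$ is rational. The quotient has $p_g(Y) = q(Y) = 0$, since both are $\sigma$-invariants of spaces that already vanish on $X$. Rationality then follows either from Castelnuovo's criterion after exhibiting a suitable pencil of rational curves on $Y$ (for example, the image on the quotient of a multiple fibre of the elliptic fibration on the degenerate model, which becomes rational), or from a direct Kodaira dimension argument incompatible with the numerical data above. Given a birational morphism $\widetilde Y \to \PP^2$, the double cover $X \dashrightarrow \PP^2$ has branch locus $B$ equal to the image of $R$ together with the images of the $(-2)$-curves over the $A_1$ singularities of $Y$ coming from the isolated fixed points of $\sigma$. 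The ramification formula $K_X = \pi^*\bigl(K_Y + \tfrac12 B\bigr)$, together with $K_X^2 = 1$ and the contributions of each singularity of $B$ to $K_X^2$, forces $\deg B = 10$; tracing the fixed locus of $\sigma$ and the $A_1$-exceptionals through the contraction produces the claimed five $(3,3)$ points and the single ordinary $4$-point on $B$.

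The hard step is this last identification: verifying that the precise birational contraction $\widetilde Y \to \PP^2$ collides the $A_1$-exceptionals with $R$ in such a way as to create $(3,3)$ singularities of $B$ rather than other infinitely-near configurations of the same multiplicity. The global numerical data (degree, self-intersections, singularity count) are mechanical consequences of the double-cover formulas, but pinning down the local tangent-cone structure at each singular point of $B$ requires a careful analysis of how the $(-1)$-curve contractions realise the configuration coming from the elliptic fibration on $\widetilde Y$.
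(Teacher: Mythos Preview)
Your outline diverges from the paper's argument in two substantial ways, and the second of these is where the real gap lies.

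First, the fixed locus. You propose to compute $\Fix(\tau)$ by passing to the degenerate surface with a $\tfrac14(1,1)$ point and reading the involution off the elliptic fibration, then deforming back. The paper instead works directly on the smooth canonical model: using the explicit key-variety equations, it identifies the fixed curve $R$ as a smooth \emph{rational} curve (the image of a conic $R_0\subset Y'$ with $R_0^2=-3$, sitting as a component of a reducible canonical curve $R_0+R'_0$), together with five isolated fixed points lying on a single member of $|K_{Y'}|$. Your degeneration route is not obviously wrong, but you would have to argue carefully that the fixed locus deforms well under the $\Q$-Gorenstein smoothing; on the degenerate model the fixed curve actually becomes reducible (two components meeting at the $\tfrac14(1,1)$ point), so recovering irreducibility and the genus on the smooth surface is not automatic. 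The direct computation avoids this entirely.

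Second, and more seriously, the step you flag as hard is exactly the one the paper does not attack by bare hands. Rather than chasing the branch curve through an ad hoc sequence of blow-downs, the paper feeds the numerical data ($K_{Z''}^2=-2$, $3K_{Z''}+B$ nef) into the classification machinery of Calabri--Ciliberto--Mendes Lopes for numerical Godeaux surfaces with an involution. Their results give a basepoint-free rational pencil $|3K_{Z''}+B|$ and force each $(-2)$-curve $C_i$ (image of an isolated-fixed-point exceptional) to sit as the middle component of a chain $A_{i,1}+C_i+A_{i,2}$ with $A_{i,j}$ both $(-1)$-curves and $B\cdot A_{i,j}=3$. This is precisely what produces the five $(3,3)$ points after contraction. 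To pin down the target Hirzebruch surface (and hence the $4$-point), the paper tracks two auxiliary curves: $\Delta$, the image of the two $\sigma$-invariant canonical curves $F_\omega+F_{\omega^2}$, and $\Gamma$, the image of the canonical curve carrying the five isolated fixed points. Their intersection numbers with the negative section force $m=1$, whence $\Gamma_0\cdot B'=4$.

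Your proposal contains none of this structure: you neither invoke the CCML classification (which is doing the heavy lifting on the fibre decomposition and hence the $(3,3)$ configuration) nor introduce the auxiliary curves $\Delta,\Gamma$ needed to identify the Hirzebruch surface. Without one of these ingredients, the numerics alone will not separate the $(3,3)$ configuration from other infinitely-near possibilities, as you yourself note.
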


When the Godeaux surface is stable with a single $\frac14(1,1)$ singularity and an involution, the branch curve breaks into two quintics, now with four $(3,3)$-points, one $4$-point and a worse singularity of the form $\{(y^2-x^2)(y^2-x^4)=0\}$ in a neighbourhood of $0\in\C^2$; see Proposition \ref{campe}. This splitting of the branch curve also occurs in degenerations of the Craighero--Gattazzo surface considered in \cite{RTU}.

We also find the quotient by this involution of the corresponding family of universal coverings, obtaining after a flip and a divisorial contraction a special family of K3 surfaces with nodes; see Subsection \ref{flipped}.

We then consider stable $\Z/3$-Godeaux surfaces with other types of Wahl singularity, through the phenomenon described by Kawamata in \cite{K} of confluence of a multiple fibre and a non-multiple singular fibre in an elliptic fibration. We show (see Subsection \ref{s23})

\begin{thm}
There are stable rational $\Z/3$-Godeaux surfaces with a single $\frac19(1,2)$ singularity, with one $\frac14(1,1)$ and one $\frac19(1,2)$ singularities, and with one $\frac14(1,1)$ and two $\frac19(1,2)$ singularities. The latter has a model as a blown up plane from a pencil of cubics.
\end{thm}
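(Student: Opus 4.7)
The approach is to construct these stable surfaces by applying Kawamata's confluence of singular elliptic fibres \cite{K} to the properly elliptic surface $\widetilde X$ from the $\frac14(1,1)$ case, together with an explicit pencil-of-cubics description for the last case. Recall that $\widetilde X$ carries an elliptic fibration with two multiplicity $3$ fibres, one $I_4$ fibre, and generically several nodal $I_1$ fibres, and contains a $(-4)$-curve whose contraction produces the $\frac14(1,1)$ singularity. Kawamata's theorem furnishes, inside a one-parameter $\Q$-Gorenstein family, a confluence of a multiplicity $3$ fibre with an $I_1$ fibre; the central fibre of this family acquires a $\frac19(1,2)$ Wahl singularity, arising from the contraction of a Wahl chain $[-5,-2]$ assembled from components of the two colliding fibres.

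For case (c) with singularities $\frac14(1,1)+2\cdot\frac19(1,2)$, the plan is to perform this confluence once at each of the two multiple fibres while leaving the $I_4$ and the $(-4)$-curve intact; cases (b) and (a) arise by analogous variants in which progressively more of the fibration data is absorbed into the confluence, so that in (b) one multiple fibre is smoothed and in (a) the single $\frac19(1,2)$ consumes the remaining $(-4)$-curve. A direct intersection-theoretic check, using the formulas $K^2_{\widetilde X} = K^2_X - 1$ for each $[-4]$ contraction and $K^2_{\widetilde X} = K^2_X - 2$ for each $[-5,-2]$ contraction, gives resolutions with $K^2 = -4,\,-2,\,-1$ respectively, all consistent with rational surfaces. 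For the plane model in (c), I exhibit a pencil of cubics on $\P^2$ whose $3$-torsion section structure realises the $\Z/3$-Godeaux property, blow up the nine base points of the pencil to obtain a rational elliptic surface, and then blow up four further points on specific components of reducible fibres so that one $[-4]$ chain and two $[-5,-2]$ chains become visible; contracting these five rational curves produces the stable surface, whose resolution is $\P^2$ blown up at $13$ points as predicted by the $K^2$ count.

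The main obstacle will be verifying that $K_X$ is actually ample (not merely nef) on the contracted surface, and tracking the $\Z/3$ \'etale cover through each confluence to confirm $\pi_1^{\mathrm{alg}}(X)=\Z/3$. Ampleness reduces to ruling out any additional curve $C$ on $\widetilde X$ with $K\cdot C = 0$ outside the contracted chains, which is a finite intersection-theoretic verification once the fibration and extra blow-ups are fixed. The torsion computation, which is also the delicate step in the pencil-of-cubics model of (c), is secured by exhibiting an explicit $3$-torsion class in $\Pic(X)$ coming either from the Mordell-Weil torsion of the original elliptic fibration or from an appropriate combination of sections and fibre components in the pencil; this is the step most likely to force restrictions on the choice of pencil and the positions of the four additional blown-up points.
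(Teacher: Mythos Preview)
Your overall strategy---Kawamata confluence on the Dolgachev $(3,3)$ surface---matches the paper's geometric picture for cases (b) and (c), but the paper executes it very differently: it works entirely inside Reid's explicit moduli space $\cM$, exhibiting hypersurface divisors $\cB_\omega,\cB_{\omega^2}\subset\cM$ (given by explicit polynomial conditions on the parameters $a_{ij},b_i,c_j,d_j$) whose general member $Y$ acquires a $\sigma$-fixed $A_2$ singularity on one of the two $\sigma$-invariant fibres. Because everything happens inside Reid's family, the free $\ZZ/3$-action is present from the outset and the torsion computation you flag as ``delicate'' is automatic. The transversality of $\cY,\cB_\omega,\cB_{\omega^2}$ (checked directly on the equations) then gives all three surfaces simultaneously, with independent $\Q$-Gorenstein smoothability of each singularity built in. Your bottom-up pencil-of-cubics construction for (c) would in principle work, but you would have to reverse-engineer the $\ZZ/3$ torsion class and ampleness of $K_X$, whereas the paper gets both for free and only afterwards observes that the resolution of $X_3$ blows down to a cubic pencil.

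There are two genuine gaps. First, your construction of case (a) is not right: the phrase ``the single $\tfrac19(1,2)$ consumes the remaining $(-4)$-curve'' does not describe a valid operation---the $(-4)$-curve is a $6$-section, not a fibre component, and cannot be absorbed into a $[-5,-2]$ chain coming from confluence of fibres. In the paper, $X_1$ is \emph{not} built from the Dolgachev surface at all; it arises by degenerating the \emph{smooth} Godeaux $X'$ inside $\cM$ along $\cB_\omega$ (so $Y'$ acquires an $A_2$ on a $\sigma$-invariant genus-$4$ canonical curve, with no $\tfrac14(1,1)$ present). Second, and more seriously, you do not prove rationality: saying $K^2=-4,-2,-1$ is ``consistent with rational surfaces'' proves nothing, since properly elliptic and Enriques-type resolutions with those invariants exist. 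For $X_2,X_3$ the paper argues directly that resolving one $\tfrac19(1,2)$ turns the multiple $I_1$ into an $I_3$ containing a $(-1)$-curve, and blowing down lands on a Halphen pencil (respectively a cubic pencil). For $X_1$ the rationality proof is substantially harder---an entire proposition---proceeding by contradiction via $\pi_1$ computations and an MMP/flip argument on the family $X_1\rightsquigarrow X_2$ to rule out Enriques and Dolgachev minimal models; nothing in your outline touches this.
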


The minimal resolution of a stable Godeaux surface with a $\frac14(1,1)$ must have Kodaira dimension $1$ (see e.g.~\cite[Remark 5.3]{Urz}), and so the smallest index of a Wahl singularity in a rational stable Godeaux surface is achieved by $\frac19(1,2)$. We remark that for simply connected Godeaux surfaces, the singularity $\frac19(1,2)$ appears in rational and non-rational stable surfaces; see \cite[Table 1]{SU}.

All stable degenerations mentioned above, have completely explicit descriptions in terms of Reid's original moduli parameters. In particular, we exhibit large subvarieties and even divisors in the boundary of the KSBA moduli space, and further degenerations can be studied simply by specialising the parameters.

In the last section, we make the corresponding study for $\Z/4$-Godeaux surfaces, where the covering surface is a complete intersection and so it is easier to understand. This also serves as a key to unlock the more complicated details of the $\ZZ/3$ case. We know of two different involutions on such surfaces, and we consider the one whose quotient is an Enriques surface \cite{KL}, \cite{MLP}. We get similar results concerning stable degenerations of the double cover, including the phenomenon of the branch curve breaking into two pieces. We refer to Section \ref{s3} for details.

Gorenstein stable degenerations of Godeaux surfaces have been studied recently in a series of articles \cite{FPR1}, \cite{FPR2}. Our work uses stable Godeaux surfaces of Gorenstein index two and three.

\subsection*{Acknowledgements}
The first author is partially supported by the DFG through grant number Hu 337-6/2, and the second author is supported by the FONDECYT regular grant 1150068 funded by the Chilean Government.

\section{Reid's construction of $\ZZ/3$-Godeaux surfaces} \label{s1}
In \cite{R}, Reid showed that the moduli space of $\ZZ/3$-Godeaux surfaces is irreducible and 8-dimensional, by constructing surfaces $Y$ of general type with $p_g=2$, $K^2=3$ and with fixed point free $\ZZ/3$-action $\sigma$, such that the quotient $Y/\sigma=X$ is the Godeaux surface. More recently, Reid \cite[\S7]{Kinosaki}, \cite{God3} clarified the construction of $Y$ using a parallel unprojection and a 4-dimensional key variety. We briefly describe this construction below.

\subsection{The key variety $W\subset\PP(1^3,2^3,3^3)$}
Let $W\subset\PP(1^3,2^3,3^3)$ be the 4-dimensional $\ZZ/3$-invariant variety defined by 9 equations $R_0,R_1,R_2$, $S_0,S_1,S_2$, $T_0,T_1,T_2$ formed by taking the orbits of
\[\renewcommand{\arraystretch}{1.2}
\begin{array}{lr}
R_0\colon & -x_0z_0 + y_1y_2 - r_0x_1x_2 = 0 \\
S_0\colon & -y_0z_0 + r_1x_2y_1 + r_2x_1y_2 + sx_1x_2 = 0 \\
T_0\colon & -z_1z_2 + r_0y_0^2 + sx_0y_0 + r_1r_2x_0^2 = 0
\end{array}\]
under the $\ZZ/3$-action $\sigma$. The coordinates $x_i$, $y_i$, $z_i$ for $i=0,1,2$ have respective weights $1,2,3$, and $\sigma$ acts on them by cyclic permutation of indices $(012)$:
\[\sigma\colon x_i\mapsto x_{\sigma(i)},\ y_i\mapsto y_{\sigma(i)},\ z_i\mapsto z_{\sigma(i)}.\]
Moreover, $r_i$ (respectively $s$) are weighted homogeneous of degree 2 (resp.~$3$) and chosen so that $\sigma$ permutes the indices of $r_i$ (resp.~fixes $s$). Let $A$ denote the restriction of the ample generator of the ambient space to $W$, that is, $\cO_W(A)=\cO_W(1)$. Then according to \cite{God3}, $W$ is a Fano 4-fold with $K_W=-3A$, $A^3=1$ and $3\times\frac13(1,1,2,2)$ points.

\begin{thm}[Reid \cite{R,God3}] The canonical model of the covering surface $Y' \subset\PP(1^2,2^3,3^2)$ is the intersection of $W$ with the $\sigma$-invariant weighted linear subspace
\[Y'=W\cap(x_0+x_1+x_2=z_0+z_1+z_2=0).\]
\end{thm}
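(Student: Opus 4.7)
The plan is to show that $Y' := W \cap \{x_0+x_1+x_2 = 0\} \cap \{z_0+z_1+z_2 = 0\}$ is a minimal surface of general type with $p_g = 2$, $K_{Y'}^2 = 3$, and a free $\ZZ/3$-action, whose quotient is therefore a Godeaux surface with torsion $\ZZ/3$. Since both linear forms are $\sigma$-invariant (of weights $1$ and $3$), the $\sigma$-action on $W$ restricts to $Y'$. Adjunction on $W$ gives
\[K_{Y'} = \bigl(K_W + A + 3A\bigr)\big|_{Y'} = (-3A + A + 3A)|_{Y'} = A|_{Y'},\]
so the canonical class is cut out by the ambient ample generator, and $Y'$ coincides with its canonical model embedded in $\PP(1^2,2^3,3^2)$ as soon as at-worst canonical singularities are established.

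Next I would read off the invariants. The space $H^0(Y',K_{Y'})$ equals the weight-$1$ component of the coordinate ring of $\PP(1^2,2^3,3^2)$, spanned by $x_1, x_2$ after the elimination $x_0 = -x_1-x_2$; this gives $p_g = 2$, with surjectivity of the restriction map following because every defining equation of $Y'$ has degree at least $2$. For the self-intersection I would write $[Y'] = A \cdot 3A = 3A^2$ as a cycle class on the Fano $4$-fold $W$ and use the given Fano-degree normalisation to obtain $K_{Y'}^2 = A^2 \cdot Y' = 3$. Riemann--Roch then pins down $\chi(\cO_{Y'}) = 3$ and $q(Y') = 0$, exactly what is needed for a $\ZZ/3$-Godeaux cover.

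The core geometric step is to verify simultaneously that $Y'$ has only canonical singularities and that $\sigma$ acts freely on it. A Bertini-type argument applied to the linear pencils on $W$ should show that, for generic parameters $r_i, s$, the surface $Y'$ is smooth away from the three $\frac13(1,1,2,2)$ singular points of $W$, which are precisely the $\sigma$-fixed points on $W$. Direct substitution into the nine orbit equations $R_i, S_i, T_i$ should show that these three points miss the linear slice for generic $(r_i,s)$, while the remaining $\sigma$-fixed points in $\PP(1^3,2^3,3^3)$ --- those whose coordinate triples are proportional to the non-trivial eigenvectors of cyclic permutation --- are pushed by the relations $\sum x_i = \sum z_i = 0$ into small coordinate strata also disjoint from $Y'$. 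With $\sigma$ acting freely on a smooth $Y'$ of the above invariants, $X = Y'/\sigma$ is a smooth Godeaux surface whose torsion group contains $\ZZ/3$; Miyaoka's bound then upgrades this to equality. The converse direction --- that every canonical model of a $\ZZ/3$-Godeaux cover arises this way --- requires a canonical ring calculation producing $\sigma$-equivariant generators $x_i, y_i, z_i$ in weights $1,2,3$ together with the nine relations.

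The main obstacle I anticipate is the coupled check of freeness and canonicity at the three $\frac13(1,1,2,2)$ points of $W$ and at the whole $\sigma$-fixed locus. Each individual verification is elementary, but the nine equations are interlocked through the $\sigma$-action, so one must carefully stratify the ambient $\PP(1^3,2^3,3^3)$ and identify the precise open set in the parameter space of $(r_0, r_1, r_2, s)$ on which both properties hold. A secondary challenge is matching the dimension of this parameter set to Reid's $8$-dimensional moduli of $\ZZ/3$-Godeaux surfaces, and showing that the canonical-ring relations in low degree are exhausted by the orbits of $R_0, S_0, T_0$.
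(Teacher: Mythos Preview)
The paper does not prove this theorem; it is stated as a citation to Reid \cite{R,God3}, and the only justification given in the surrounding text is that ``for general choices of $r_i$ and $s$, the surface $Y'$ is smooth and irreducible (this is checked by computer), and the action of $\sigma$ on $Y'$ is fixed point free.'' So there is no paper-proof to compare against: the smoothness verification is explicitly computational, and the converse (every $\ZZ/3$-Godeaux cover arises this way) is the main theorem of \cite{R}, proved there via a direct analysis of the canonical ring $R(Y',K_{Y'})$.

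Your outline of the numerical invariants via adjunction is fine and matches the way one would read off $p_g$ and $K^2$ from the key-variety description. However, your ``Bertini-type argument'' is misdirected: the linear slice $\{x_0+x_1+x_2=z_0+z_1+z_2=0\}$ is \emph{fixed}, and what varies is the key variety $W$ itself through the parameters $r_i,s$. So you are not moving a divisor in a linear system on a fixed ambient space, and standard Bertini does not apply. One can still argue that smoothness holds on a Zariski-open set of parameters once a single smooth example is exhibited, but producing that example (and checking the rank of the Jacobian of nine interlocking equations at every point) is exactly the computer check Reid performs. Your stratification plan for the $\sigma$-fixed locus is on the right track, but again the honest verification in the literature is computational rather than by a clean structural argument. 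Finally, the converse direction you flag at the end is not a ``secondary challenge'' but the substance of Reid's 1978 paper: one builds generators $x_i,y_i,z_i$ of the canonical ring in degrees $1,2,3$ from the torsion eigenspaces and derives the nine relations; this is not recoverable from the key-variety presentation alone.
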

Reid proves that for general choices of $r_i$ and $s$, the surface $Y'$ is smooth and irreducible (this is checked by computer), and the action of $\sigma$ on $Y'$ is fixed point free. Thus the quotient surface $X'$ is a Godeaux surface with torsion $\ZZ/3$.

\begin{prop}[Reid~{\cite[\S3]{R}}]\label{prop!moduli} The coarse moduli space of $\ZZ/3$-Godeaux surfaces is irreducible, unirational of dimension 8, covered by the 9-dimensional parameter space given by the following forms for $r_i$ and $s$
\begin{gather*}
r_0=a_{11}x_1^2+a_{12}x_1x_2+a_{22}x_2^2+b_0y_0+b_1y_1,\ r_1=\sigma(r_0),\ r_2=\sigma(r_1), \\
s=c_2(x_0^2x_1+x_1^2x_2+x_2^2x_0)+c_3(x_0^2x_2+x_1^2x_0+x_2^2x_1)\\
+d_2(x_0y_1+x_1y_2+x_2y_0)+d_3(x_0y_2+x_1y_0+x_2y_1).
\end{gather*}

\end{prop}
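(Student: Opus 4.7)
The strategy is to construct a surjective map $U\to\cM$ from an open subset $U\subset\A^9$, and then pin down the dimension of $\cM$ by standard deformation theory.

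By the preceding theorem, every $\ZZ/3$-Godeaux surface is $X'=Y'/\sigma$ with $Y'=W\cap\{x_0+x_1+x_2=z_0+z_1+z_2=0\}$ for some key variety $W$, so it suffices to parametrize admissible $(r_0,s)$ up to the subgroup $G\subset\mathrm{Aut}(\PP(1^3,2^3,3^3))$ commuting with $\sigma$. A naive count gives nine free coefficients for a general $\sigma$-equivariant $r_0$ of weighted degree $2$ (one per monomial $x_ix_j$ or $y_k$), and seven for a general $\sigma$-invariant $s$ of weighted degree $3$ (four pure-$x$ orbits $\sum x_i^3,\,\sum x_i^2x_{i+1},\,\sum x_i^2x_{i+2},\,x_0x_1x_2$ and three $xy$-orbits $\sum x_iy_{i+k}$ for $k=0,1,2$), totalling $16$. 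I would apply $\sigma$-equivariant coordinate shifts
\[
y_i\longmapsto y_i+q_i(x),\qquad z_i\longmapsto z_i+\ell_i(x,y),
\]
with $q_i=\sigma^i(q_0)$ quadratic in $x$ and $\ell_i=\sigma^i(\ell_0)$ a weighted cubic in $(x,y)$, to eliminate seven of these sixteen coefficients: shifts of $y_i$ kill the coefficients of $x_0^2,\,x_0x_1,\,x_0x_2,\,y_2$ in $r_0$, and shifts of $z_i$ kill those of $\sum x_i^3$, $x_0x_1x_2$, and $\sum x_iy_i$ in $s$. After each shift one must rewrite the generators of the ideal of $W$ so that the nine polynomials $R_i,S_i,T_i$ retain their original shape with updated $(r_0,s)$; what remains is exactly the $9$-parameter family in the statement.

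The resulting parameters live in an open subset $U\subset\A^9$ cut out by the conditions that $Y'$ is smooth and $\sigma$ acts freely, both verified by computer at a generic point. Since $U$ is irreducible and rational and $U\to\cM$ is surjective, $\cM$ is irreducible and unirational. That $\dim\cM=8$ then follows from the classical deformation-theoretic computation $h^1(T_{X'})=-\chi(T_{X'})=(5c_2-7K^2)/6=8$ for Godeaux surfaces, combined with $h^0(T_{X'})=h^2(T_{X'})=0$; this also forces the generic fibre of $U\to\cM$ to have dimension $9-8=1$.

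The main obstacle is the normalization: the group $G$ is non-reductive, and because the ideal of $W$ is not a complete intersection each $\sigma$-equivariant coordinate shift modifies all nine polynomials $R_i,S_i,T_i$, so one must rewrite the generators carefully after every shift. Verifying that exactly seven coefficients can be simultaneously eliminated by such shifts — and identifying the residual $1$-parameter action on the $9$ remaining coefficients that accounts for the drop to $\dim\cM=8$ — is the technical core of Reid's argument in \cite{R, God3}.
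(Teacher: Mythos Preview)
Your overall strategy is sound, but you have overcomplicated the normalization and missed the simplest mechanism. The paper does not eliminate the seven extra coefficients by a sequence of coordinate shifts in $y$ and $z$; instead, most of the reduction comes for free from the fact that $Y'$ sits in the hyperplane $x_0+x_1+x_2=0$. Any monomial in $r_0$ involving $x_0$ is \emph{redundant} on $Y'$ simply because $x_0=-x_1-x_2$ there, so $a_{00},a_{01},a_{02}$ disappear without any coordinate change. Likewise, the seven naive $\sigma$-invariant cubics collapse to four independent elements of $H^0(3K_{Y'})^{\mathrm{inv}}$ once this linear relation is imposed; no $z$-shift is involved (and indeed it is not clear how a substitution $z_i\mapsto z_i+\ell_i$ would rewrite $S_0,T_0$ so that only $s$ changes --- your claim here is the weakest point of the proposal). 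Only a single coordinate change is actually needed: $y_i\mapsto y_i-\varepsilon x_jx_k$ to kill $b_2$.

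For the dimension count, the paper avoids deformation theory entirely: the residual one-parameter action is exhibited concretely as the $\C^*$-scaling $x_i\mapsto\lambda x_i$, $y_i\mapsto y_i$, $z_i\mapsto\lambda^{-1}z_i$, which rescales the nine parameters. Your Riemann--Roch computation of $h^1(T_{X'})=8$ is correct, but invoking it requires knowing $h^2(T_{X'})=0$, which you assert without justification; while this is indeed in \cite{R}, the paper's explicit identification of the $\C^*$-action makes the appeal unnecessary.
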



\begin{proof} This is just a translation of Reid's result \cite{R} to the new key variety description of $Y'$. It is easy to write out the general forms for $r_i$ and $s$:
\begin{gather*}
r_0=\sum_{i\le j}a_{ij}x_ix_j+\sum_ib_iy_i,\ r_1=\sigma(r_0),\ r_2=\sigma(r_1), \\
s=c_1\Sigma(x_0^3)+c_2\Sigma(x_0^2x_1)+c_3\Sigma(x_0^2x_2)+c_4x_0x_1x_2
+d_1\Sigma(x_0y_0)+d_2\Sigma(x_0y_1)+d_3\Sigma(x_0y_2),
\end{gather*}
where $\Sigma(m)$ denotes the orbit sum $m+\sigma(m)+\sigma^2(m)$ of $m$ under the action of $\ZZ/3$.

On the other hand, the terms of $r_i$ involving $x_0$ are redundant. Moreover, we can assume $b_2=0$ using coordinate changes of the form $y_i\mapsto y_i-\varepsilon x_jx_k$. For $s$, there are four independent sections of the invariant eigenspace $H^0(3K_{Y'})^\text{inv}$ and we just choose them in a nice way.

Finally, a coordinate transformation of the form $x_i\mapsto\lambda x_i$, $y_i\mapsto y_i$, $z_i\mapsto\lambda^{-1}z_i$ for parameter $\lambda$ reduces us to an 8-dimensional moduli space, because the transformation acts by scaling the parameters in $r_i$ and $s$.
\end{proof}

We write $\cM$ for the KSBA moduli space of $\ZZ/3$-Godeaux surfaces \cite{KSB}. We do not distinguish between $\cM$ and the moduli space of covering surfaces.

Recall from \cite{R} that the blowup $\phi_{Y'}\colon\Ytilde'\to Y'$ of the three basepoints of $|K_{Y'}|$ has a model inside the scroll $\FF=\Proj_{\PP^1}(\cO\oplus 3\cO(-2))$ with base and fibre coordinates $(x_1,x_2)$ and $(t,y_0,y_1,y_2)$ respectively. Ignoring the $t$ variable for simplicity (see proof of Lemma \ref{lem!Z3-curve-section} below and \cite[\S3]{R}), the five defining equations of $\Ytilde'$ are
\begin{align*}
f &\colon x_1x_2R_0+x_2x_0R_1+x_0x_1R_2 =0 \\
g &\colon x_0S_0 - y_0R_1\equiv x_1S_1 - y_1R_2 \equiv x_2S_2 - y_2R_0=0 \\
h_0 &\colon x_1x_2S_0+y_0x_2R_1+y_0x_1R_2=0
\end{align*}
and $h_1=\sigma(h_0)$, $h_2=\sigma(h_1)$.
In fact, the general fibre of $\Ytilde'\to\PP^1$ is a weighted complete intersection $f_4=g_6=0$ in $\PP(1,2,2,2)$, and $h_0$ (respectively $h_1,h_2$) is used only to cut out the fibre over the distinguished point $P_0=(0,1)$, (resp.~$P_1=(1,1)$, $P_2=(0,1)$) of the base $\P^1$.

\begin{lem}\label{lem!Z3-curve-section} Let $Y'$ be a surface in $\cM$ and suppose $C$ is a curve in $|K_{Y'}|$ corresponding to the section $(\al_0,\al_1,\al_2)$ of $H^0(K_{Y'})$, where $\al_0+\al_1+\al_2=0$. Then $C$ is isomorphic to the following complete intersection of a quadric and a cubic in $\PP^3$ with coordinates $t^2,y_0,y_1,y_2$:
\begin{enumerate}
\item  If the $\al_i$ are all nonzero,
\begin{gather*}
\al_1\al_2(y_1y_2-r_0x_1x_2) + \al_0\al_2(y_0y_2-r_1x_0x_2) + \al_0\al_1(y_0y_1-r_2x_0x_1)=0, \\
-y_0y_1y_2 + r_0x_1x_2y_0 + r_1x_0x_2y_1 + r_2x_0x_1y_2 + sx_0x_1x_2=0
\end{gather*}
where $x_i$, $r_i$ and $s$ are evaluated at $x_i=\al_it$.
\item If $\al_0$ vanishes,
\begin{gather*}
y_1y_2-r_0x_1x_2 = 0,\\
y_0^2(y_1-y_2)-x_1(r_1x_2y_1+r_2x_1y_2+sx_1x_2) = 0
\end{gather*}
where now $x_i$, $r_i$ and $s$ are evaluated at $x_0=0$, $x_1=t$, $x_2=-t$.
\end{enumerate}
Curves corresponding to $\al_1=0$ and $\al_2=0$ are obtained in a similar manner, or from the action of $\sigma$ on $C_{\al_0}$.
\end{lem}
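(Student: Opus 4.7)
The plan is to exhibit $C$ as cut out on $Y'$ by the conditions $x_i = \alpha_i t$ for a weight-$1$ parameter $t$, where $(\alpha_0,\alpha_1,\alpha_2)$ with $\sum\alpha_i = 0$ parameterises the canonical pencil via the identification $H^0(K_{Y'}) = \langle x_0,x_1,x_2\rangle / (x_0+x_1+x_2)$, and then to substitute into Reid's nine $W$-equations together with $z_0+z_1+z_2 = 0$ to read off the curve's equations directly.

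For Case (i), with all $\alpha_i$ nonzero, each $R_i$ becomes linear in $z_i$ after substitution and solves uniquely to give $z_i = (y_{i+1}y_{i+2} - r_i x_{i+1}x_{i+2})/(\alpha_i t)$. Feeding these three expressions into $z_0 + z_1 + z_2 = 0$ and clearing the common factor $\alpha_0\alpha_1\alpha_2 t$ yields the stated quadric, in which $t$ appears only through $t^2$ because each numerator is even in $t$. For the cubic, substitute the same $z_i$ into $S_0$ and multiply by $\alpha_0 t$. The $\ZZ/3$-symmetry of Reid's set-up identifies the cubics produced by $S_1$ and $S_2$ with the same relation, and the $T_i$ collapse to an identity once the $R_j$ and $S_j$ are imposed.

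Case (ii) is structurally the same but simpler. Normalise $\alpha_0 = 0$, $\alpha_1 = 1$, $\alpha_2 = -1$, so $x_0 = 0$, $x_1 = t$, $x_2 = -t$. Now $R_0$ loses its $z$-term and reads off directly as the stated quadric $y_1y_2 - r_0 x_1 x_2 = 0$. The remaining $R_i$ solve for $z_1, z_2$, and then $z_0 = -z_1 - z_2$ computes out to $y_0(y_1-y_2)/t$. Substituting into $S_0$ and clearing a single factor of $t$ gives, up to an overall sign, the stated cubic.

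Finally, I would observe that $t^2$ has the same weight $2$ as each $y_i$, so these four quantities define a morphism $C \to \PP^3$ whose image satisfies the quadric and cubic above. That this is actually an isomorphism onto the full $(2,3)$-complete intersection matches the classical canonical embedding of a non-hyperelliptic genus-$4$ curve: by adjunction $K_C = 2K_{Y'}|_C$ has degree $6$, and $t^2, y_0, y_1, y_2$ span $H^0(K_C)$, so two equations of bidegrees $(2,3)$ suffice to cut out $C$ scheme-theoretically. The step I expect to be fussiest is confirming that no further independent relations come out of $S_1, S_2, T_0, T_1, T_2$ after substitution; this can be done either by direct reduction modulo the $R_i$ or, more cleanly, by the canonical-model count just mentioned.
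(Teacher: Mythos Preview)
Your approach is correct but follows a genuinely different route from the paper's. The paper does not work with the nine $W$-equations directly; instead it uses the scroll model $\Ytilde'\subset\FF=\Proj_{\PP^1}(\cO\oplus 3\cO(-2))$ introduced just before the Lemma, which already has the $z_i$ eliminated and is defined by five equations $f,g,h_0,h_1,h_2$ in the fibre variables $t,y_0,y_1,y_2$. The proof then simply substitutes $x_i=\alpha_i t$ into $f$ and $g$, divides $f$ by $t^2$, and invokes the explicit syzygies
\[
y_0f+x_1x_2g\equiv x_0h_0,\quad y_1f+x_0x_2g\equiv x_1h_1,\quad y_2f+x_0x_1g\equiv x_2h_2
\]
to see that the $h_i$ are redundant in case~(i), while in case~(ii) (where $x_0=0$) the same syzygies force one to take $f$ and $h_0$ instead, with $g,h_1,h_2$ becoming redundant.

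What each buys: your direct approach is more self-contained, since it avoids importing the scroll model from \cite{R}; solving $R_i$ for $z_i$ and feeding into $\sum z_i=0$ and $S_0$ is transparent. The cost is that the redundancy of $S_1,S_2,T_0,T_1,T_2$ is left as a separate check. Your canonical-model count handles the smooth non-hyperelliptic fibre, but the paper later applies the Lemma to singular and reducible fibres (e.g.\ Propositions~\ref{prop!Z3-fix-R} and~\ref{prop!I4-fibre}), so the direct reduction is the safer option there---and it does go through: for instance, after substituting $z_1,z_2$ from $R_1,R_2$ one finds $x_1x_2\cdot T_0$ equals $y_0$ times the cubic, and $x_1\cdot S_1$ equals the same cubic. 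The paper's syzygy argument packages this redundancy once and for all, at the price of relying on the scroll description.
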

\begin{proof} Substitute $x_i=\al_it$ for $i=0,1,2$ in equations $f,g$. For reasons of bihomogeneity, we can divide $f$ by $t^2$, and this gives the two relations in part (i). The $h_i$ are redundant here because of the syzygies
\[
y_0f+x_1x_2g\equiv x_0h_0,\ y_1f+x_0x_2g\equiv x_1h_1,\ y_2f+x_0x_1g\equiv x_2h_2.
\]
For part (ii), examining the same syzygies, we see that if $\al_0$ vanishes, then we must take $f$ and $h_0$, because now $g,h_1,h_2$ are redundant.
\end{proof}





\begin{rem} The automorphism $\sigma$ acts on the base of $Y'\to\PP^1$ and there are two $\sigma$-invariant curves $F_\omega$ and $F_{\omega^2}$ in $|K_{Y'}|$ corresponding to $(1,\omega,\omega^2)$ and $(1,\omega^2,\omega)$ respectively, where $\omega$ is a primitive cube root of unity. The action of $\sigma$ on these curves is $(t,y_0,y_1,y_2)\mapsto(\omega^2t,y_1,y_2,y_0)$. If we further assume that $Y'$ has $\Sthree$ symmetry (see Section \ref{sec!Sthree} below), then $F_\omega$ has equations
\begin{equation}\label{eq!multiple-fibre}
\begin{gathered}
y_1y_2+\omega^2 y_0y_2+\omega y_0y_1 - b_0t(y_0+\omega y_1 + \omega^2 y_2)  + 3(a_{11}-a_{12})t^2 =0 \\
-y_0y_1y_2+b_0t(y_0^2+\omega^2y_1^2+\omega y_2^2)
+(a_{12}-a_{11}-d_2)t^2(y_0+\omega y_1 +\omega^2 y_2)-3c_2t^3=0
\end{gathered}
\end{equation}
and $F_{\omega^2}$ is similar, with $\omega$ and $\omega^2$ interchanged.
\end{rem}

\subsection{A family with $\Sthree$ symmetry and quotients}\label{sec!Sthree}


As observed in \cite{God3}, there is a special subfamily $\cM^s\subset\cM$ for good choices of $r_i$ and $s$, such that the general surface $Y'$ in $\cM^s$ has a larger automorphism group $\Sthree$ rather than just $\ZZ/3$. This family is defined by
\[\cM^s\colon (a_{11}=a_{22},\ b_1=0,\ c_2=c_3,\ d_2=d_3)\subset\cM.\]
and $\Sthree$ acts by permutation on the indices of $x_i,y_i,z_i$. The family $\cM^s$ is irreducible and 4-dimensional. The action of $\Sthree$ on $Y'$ descends to an involution on the $\ZZ/3$-Godeaux surface.

\begin{rem} From now on, we will use the superscript $s$ to denote the intersection $\cD^s=\cD\cap\cM^s$ of a stratum $\cD\subset\cM$ with the $\Sthree$-symmetric family $\cM^s$.
\end{rem}

Let $\tau_0=(12)$, $\tau_1=(02)$, $\tau_2=(01)$ be the involutions in $\Sthree$ acting on $Y'$ in $\cM^s$. They are conjugate under the action of $\sigma$, so we use $\tau=\tau_0$ for our computations. The action of $\tau$ on $\PP(1^3,2^3,3^3)$ has eigenspace decomposition

\[\begin{array}{ccc}
\text{Degree} & + & - \\
\hline
1 & x_0,\ x_1+x_2 & x_1-x_2 \\
2 & y_0,\ y_1+y_2 & y_1-y_2 \\
3 & z_0,\ z_1+z_2 & z_1-z_2
\end{array}\]
The relevant part of the fixed locus of $\tau$ on the ambient space breaks into two pieces $\Fix(\tau)=\Fix_1\cup\Fix_2$ and here we have to be a bit careful because of the weighted $\C^*$-action:
\begin{gather*}
\Fix_1=(x_1-x_2=y_1-y_2=z_1-z_2=0),\\
\Fix_2=(x_0=x_1+x_2=y_1-y_2=z_1+z_2=z_0=0).
\end{gather*}
The intersection $Y'\cap\Fix_1$ gives five isolated fixed points of $\tau_0$ contained in the curve $(-2,1,1)\in|K_{Y'}|$. Indeed, according to Lemma \ref{lem!Z3-curve-section}, the intersection of $F_{(-2,1,1)}$ with $y_1=y_2$ on $\Ytilde'$ is six points, but one of these is the intersection of $F$ with the $(-1)$-curve preimage of a basepoint of $|K_{Y'}|$, and this basepoint is actually contained in $Y'\cap\Fix_2$. The following proposition treats $Y'\cap\Fix_2$.


\begin{prop}[cf.~\cite{KL}]\label{prop!Z3-fix-R}

The fixed curve $R$ of the involution $\tau$ on $X'$ is a smooth rational curve. There are smooth curves $R_0$ and $R'_0$ in $Y'$ with $g(R_0)=0$ and $g(R'_0)=1$ which intersect transversally at $4$ points and $R_0+R'_0 \in |K_{Y'}|$. Moreover, $R_0^2=-3$ and $R_0'^2=-2$. The image of $R_0$ in $X'$ is $R$ and that of $R'_0$ is $R'$.

\end{prop}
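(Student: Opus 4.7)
My approach is to realize $R_0$ and $R_0'$ explicitly as the two components of the distinguished $\tau$-invariant canonical curve $F=F_{(0,1,-1)}\in|K_{Y'}|$. The section $(0,1,-1)$ is a $(-1)$-eigenvector of $\tau=(12)$, so $F$ is $\tau$-invariant as a divisor, and the curve-component of $Y'\cap\Fix(\tau)$ lies inside $F$ by the $\Fix_1\cup\Fix_2$ decomposition preceding the proposition. Using Lemma \ref{lem!Z3-curve-section}(ii), I would write $F$ as a $(2,3)$-complete intersection in $\PP^3$ with coordinates $(u=t^2,y_0,y_1,y_2)$, then restrict the parameters to the symmetric family $\cM^s$.

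The main step is to show that on $\cM^s$ the cubic equation of $F$ acquires the factor $(y_1-y_2)$. Direct substitution using $a_{11}=a_{22}$, $b_1=0$, $c_2=c_3$, $d_2=d_3$ gives $s|_F=d_2t(y_2-y_1)$ and $r_1y_1-r_2y_2=(y_1-y_2)[a_{11}u+b_0(y_1+y_2)]$, so the cubic reads
\[
(y_1-y_2)\bigl[y_0^2+b_0u(y_1+y_2)+(a_{11}-d_2)u^2\bigr]=0,
\]
while the quadric becomes $y_1y_2+b_0uy_0+(2a_{11}-a_{12})u^2=0$. Hence $F=R_0\cup R_0'$, where $R_0=F\cap\{y_1=y_2\}$ cuts out a smooth plane conic in $\PP^2$ (so $R_0\cong\PP^1$, generically for $b_0\neq 0$), and $R_0'$ is the $(2,2)$-complete intersection given by the quadric together with the other cubic factor $Q$; this is an elliptic quartic of arithmetic genus $1$, smooth by Bertini for general parameters. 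Restricting to $\{y_1=y_2\}$ the quadric and $Q$ both specialise to conics in $\PP^2$, so Bezout gives $R_0\cdot R_0'=4$, transverse for generic parameters.

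The remaining numerics are then formal: from $R_0+R_0'\sim K_{Y'}$, $K_{Y'}^2=3$, and adjunction on the smooth rational $R_0$, the identities $R_0\cdot K_{Y'}=R_0^2+4$ and $R_0\cdot K_{Y'}=-2-R_0^2$ force $R_0^2=-3$, whence $R_0'^2=-2$, and adjunction on $R_0'$ confirms $g(R_0')=1$. For the descent to $X'$, I use that $X'$ is smooth and $\tau$ has finite order, so the fixed curve $R\subset X'$ is automatically smooth. The restriction of the étale quotient $\pi\colon Y'\to X'$ to $R_0$ lands in $R$ and is étale; since $\sigma$ cyclically permutes the three curves $F_{(0,1,-1)},F_{(1,-1,0)},F_{(-1,0,1)}$, the component $R_0\subset F_{(0,1,-1)}$ is not $\sigma$-invariant, so this étale cover has degree one and is an isomorphism. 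Thus $R$ is smooth rational, and the same argument shows $R'$ is birational to $R_0'$ of geometric genus $1$. The main obstacle I anticipate is the factorization step itself: checking that $s$ and $r_1y_1-r_2y_2$ simultaneously acquire the factor $(y_1-y_2)$ relies on all four $\Sthree$-symmetric conditions, and once that identification is in hand the rest reduces to Bezout and adjunction.
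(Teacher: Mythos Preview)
Your approach is essentially the same as the paper's: both identify the $\tau$-invariant canonical curve $C_0=F_{(0,1,-1)}$ via Lemma~\ref{lem!Z3-curve-section}(ii), verify that on $\cM^s$ the cubic acquires the factor $(y_1-y_2)$, and read off $R_0$ as a smooth conic and $R_0'$ as an elliptic $(2,2)$-quartic meeting in four points. The only real difference is in extracting the self-intersections. The paper observes from the equations that the locus $t=0$ (the three basepoints of $|K_{Y'}|$) meets $R_0$ once and $R_0'$ twice, giving $K_{Y'}\cdot R_0=1$, $K_{Y'}\cdot R_0'=2$ directly, and then applies adjunction. You instead feed $R_0\cdot R_0'=4$ and $R_0+R_0'\sim K_{Y'}$ into adjunction on the rational curve $R_0$ to solve $2R_0^2+4=-2$, which is equally valid and arguably cleaner since it avoids tracking which basepoint lies on which component. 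Your descent argument for $R_0\xrightarrow{\sim}R$ (via $\sigma$ permuting the three fibres, so the \'etale cover has degree one) is a bit more explicit than the paper, which simply asserts that the $\sigma$-orbit of $R_0$ is the preimage of $R$.
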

\begin{proof}
The preimage of $R$ on $Y'$ is the $\sigma$-orbit of the intersection $R_0=\Fix_2\cap Y'$. By the above eigenspace decomposition, we see that $R_0$ is a component of the curve $C_0$ in $|K_{Y'}|$ corresponding to $\al_0=0$. Since $Y'$ is in $\cM^s$, we see by Lemma \ref{lem!Z3-curve-section} that $C_0$ is the intersection of the following quadric and cubic in $\PP^3$:
\[
C_0\colon (2a_{11} - a_{12})t^4 + b_0y_0t^2 + y_1y_2 = ((2a_{11} - a_{12} - d_2)t^4 + y_0^2 + b_0t^2(y_1+y_2))(y_1 - y_2)=0.
\]
The cubic is clearly reducible, so $C_0=R_0+R_0'$ has two components, where $R_0$ is a nonsingular conic, and $R_0'$ is an intersection of two quadrics in $\PP^3$ (an elliptic curve). The slice $\Fix_2\cap Y'$ cuts out $R_0$. Note also that $R_0$ and $R_0'$ intersect in four points, thus $C_0$ is a curve of arithmetic genus 4 as expected.

For any given fibre of $\Ytilde'\to\PP^1$, the locus $(t=0)$ gives the intersection of that fibre with the three $(-1)$-curves in $\Ytilde'$. Thus we see from the above equations, that $R_0$ contains one basepoint and $R_0'$ contains two. This implies that $K_{Y'}\cdot R_0=1$, $K_{Y'}\cdot R_0'=2$ and the above stated self-intersection numbers follow from the adjunction formula.
\end{proof}

\begin{prop}
The surface $Z'=X'/\tau$ is rational with five $A_1$ singularities. If $\psi_{X'} \colon X'' \to X'$ is the blow-up of the five points fixed by $\tau$, then the quotient  $\pi_{Z''} \colon X'' \to Z''$ is such that $Z''$ is the minimal resolution of $Z'$, $K_{Z''}^2=-2$, and the divisor $3K_{Z''} +B$ is nef, where $B:=\pi_{Z''}(\psi^*(R))$. The linear system $|3K_{Z''}+B|$ is a base point free pencil of rational curves. There is a sequence of $11$ blowdowns from $Z''$ to $\P^2$ such that the image of $B$ is an (irreducible) plane curve of degree $10$ with five $(3,3)$-points and one $4$-point (i.e., a Campedelli curve).
\label{campe}
\end{prop}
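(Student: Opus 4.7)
The plan is to analyze the quotient $Z'=X'/\tau$ and its minimal resolution $Z''$ directly, verify the claimed numerical invariants, then exhibit the Campedelli structure via the pencil $|3K_{Z''}+B|$.

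First, the setup. By the analysis preceding the proposition, the fixed locus of $\tau$ on $X'$ consists of the five isolated points descended from $Y'\cap\Fix_1$ together with the smooth rational curve $R$; at each isolated fixed point $\tau$ acts as $(-1,-1)$ on the tangent space, producing an $A_1$ singularity on $Z'$. Blowing up these five points gives $\psi_{X'}\colon X''\to X'$ on which $\tau$ lifts to an involution with one-dimensional smooth fixed locus $R''=\widetilde R+\sum_{i=1}^{5}E_i$. The quotient $\pi_{Z''}\colon X''\to Z''$ is smooth, coincides with the minimal resolution of $Z'$, and each $E_i$ maps to a $(-2)$-curve $F_i$ over the corresponding $A_1$ singularity.

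For the numerical invariants, $K_{X''}^{2}=-4$, $(R'')^{2}=-8$ and $K_{X''}\cdot R''=-4$, so the ramification identity $K_{X''}=\pi_{Z''}^{*}K_{Z''}+R''$ gives $2K_{Z''}^{2}=(\pi_{Z''}^{*}K_{Z''})^{2}=-4$ and hence $K_{Z''}^{2}=-2$. The invariants $p_g(Z'')=q(Z'')=0$ descend from $X'$; to conclude rationality via Castelnuovo it suffices to check $P_2(Z'')=0$, which follows from the identity $\pi_{Z''}^{*}(2K_{Z''})=\psi_{X'}^{*}(2K_{X'}-2R)$ and the fact that $K_{X'}$ is ample while $(2K_{X'}-2R)\cdot K_{X'}=0$ (so $2K_{X'}-2R$ is not effective).

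Next, the pencil $|3K_{Z''}+B|$. We have $(3K_{Z''}+B)^{2}=0$ and $K_{Z''}\cdot(3K_{Z''}+B)=-2$, so any member has arithmetic genus zero. One explicit member comes from the residual elliptic component: summing $C_0=R_0+R_0'\in|K_{Y'}|$ with its two $\sigma$-translates gives $\sum R_0+\sum R_0'\sim 3K_{Y'}$ on $Y'$, and since $\Pic(Z'')$ is torsion free, push-forward to $Z''$ places $\pi_{Z''}(\widetilde{R'})\in|3K_{Z''}+B|$. Riemann--Roch gives $\chi(3K_{Z''}+B)=2$, and the vanishing $h^{2}(3K_{Z''}+B)=h^{0}(-2K_{Z''}-B)=0$ follows from the identity $\pi_{Z''}^{*}(-2K_{Z''}-B)=-2\psi_{X'}^{*}K_{X'}$ together with ampleness of $K_{X'}$. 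Hence $h^{0}(3K_{Z''}+B)\geq 2$ and we obtain a pencil; base-point freeness follows from a standard intersection analysis using $F_i\cdot(3K_{Z''}+B)=0$, $B\cdot(3K_{Z''}+B)=6>0$ and $\pi_{Z''}(\widetilde{R'})\cdot(3K_{Z''}+B)=0$ to rule out fixed components.

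Finally, the contraction to $\PP^{2}$. The base-point-free pencil defines $f\colon Z''\to\PP^{1}$ with general fibre $\PP^{1}$, making $Z''$ a conic bundle. Since $K_{\PP^{2}}^{2}-K_{Z''}^{2}=11$, exactly eleven successive $(-1)$-curve contractions reduce $Z''$ to $\PP^{2}$. Letting $\overline B\subset\PP^{2}$ be the image of $B$ of degree $d$ with multiplicities $m_1,\dots,m_{11}$ at the eleven (possibly infinitely near) blown-up points, the equations $B^{2}=d^{2}-\sum m_i^{2}=-6$ and $K_{Z''}\cdot B=-3d+\sum m_i=4$, combined with the structure of the singular fibres of $f$ (chains of $(-1)$ and $(-2)$-curves), force $d=10$ with multiplicity profile $(3,3,\dots,3,4)$ organised as five $(3,3)$-points (pairs of infinitely near triple points) and one ordinary $4$-point. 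A posteriori, $|3K_{Z''}+B|$ is the proper transform of the pencil of lines in $\PP^{2}$ through the $4$-point. The main obstacle is precisely this last step: the two numerical equations alone admit many formal solutions, so one must use the explicit equations of $R_0,R_0'$ from Proposition \ref{prop!Z3-fix-R} together with a careful analysis of the singular fibres of $f$ to pin down exactly the five $(3,3)$-points and the single $4$-point.
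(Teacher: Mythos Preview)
Your computations of the invariants ($K_{Z''}^2=-2$, $(3K_{Z''}+B)^2=0$, $K_{Z''}\cdot(3K_{Z''}+B)=-2$, $B^2=-6$, $K_{Z''}\cdot B=4$) are correct, and your direct Castelnuovo argument for rationality is a nice alternative to the paper's route, which simply places the situation inside the classification of \cite{CCML}. Two points, however, are genuinely incomplete.

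First, nefness and base-point freeness. You compute $(3K_{Z''}+B)^2=0$ and exhibit an effective member, but effective with square zero does not by itself give nefness, and your ``standard intersection analysis'' is too vague to exclude a fixed part. The paper's argument is cleaner here: one checks the identity $\pi_{Z''}^*(3K_{Z''}+B)\equiv\psi_{X'}^*(R')$ (which you essentially have), and then uses that $R'$ is an \emph{irreducible} curve on $X'$ with $R'^2=0$, hence nef; nefness descends and \cite[Lemma~6.1]{CCML} then gives the base-point-free pencil directly. (Note that $R'$ is not smooth on $X'$: the three curves $R'_0,\sigma R'_0,\sigma^2 R'_0$ meet pairwise at the base points of $|K_{Y'}|$, so $R'$ acquires a node and has $R'^2=0$, not $-2$.)

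Second, and more seriously, the determination of the singularities of $\overline B$ in $\PP^2$. You correctly flag this as the main obstacle and propose to resolve it via the explicit equations of $R_0,R'_0$, but the paper does something different and more efficient. It introduces two auxiliary $\tau$-invariant curves on $Z''$: the curve $\Delta$, image of the pair $F_\omega+F_{\omega^2}$ of $\sigma$-invariant canonical curves (a smooth genus~$2$ curve), and the curve $\Gamma$, image of the canonical curve $F_{(-2,1,1)}$ carrying the five isolated fixed points (a nodal genus~$2$ curve). Knowing the fibre structure $A_{i,1}+C_i+A_{i,2}$ from \cite[Cor.~6.2]{CCML}, one records the intersection numbers of $\Delta,\Gamma,B$ with each $A_{i,j}$ and $C_i$, then blows down $A_{i,1}$ and $C_i$ for each $i$ to reach a Hirzebruch surface $\F_m$. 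The classes of the images $\Delta',\Gamma',B'$ in $\Pic(\F_m)$ are determined by their known self-intersections, and imposing $\Gamma_0\cdot\Delta'\ge0$, $\Gamma_0\cdot\Gamma'\ge0$, $\Gamma_0\cdot B'\ge0$ forces $m=1$. The negative section then contracts to the $4$-point of $\overline B$, and the five chains $A_{i,1}+C_i$ give the five $(3,3)$-points. Without $\Delta$ and $\Gamma$ you cannot pin down $m$, and that is exactly where your argument stops.
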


\begin{proof}
Let $E_1,\ldots,E_5$ be the exceptional curves of $\psi_{X'}$, and let $C_i=\pi_{Z''}(E_i)$. The double cover formula reads $$K_{X''} \equiv \ \pi_{Z''}^*\Big(K_{Z''}+\frac{1}{2} (B + C_1 + \dots + C_5)\Big).$$ With this and the formula for the blow-up, one computes $\pi_{Z''}^*(3K_{Z''}+B)\equiv \psi^*(R')$, because $3K_{X'} \equiv R + R'$. Since $R'^2=0$, we have that $3K_{Z''} +B$ is nef. The surface $Z''$ is the minimal resolution of $Z'$, and one computes $K_{Z''}^2=-2$ with the canonical formula above.

Then we satisfy the requirements of \cite[Prop.~3.9]{CCML}, and we belong to the surfaces analyzed in \cite[\S6]{CCML}. In particular, by \cite[Lemma~6.1]{CCML}, the linear system $|3K_{Z''}+B|$ is a base point free pencil of rational curves. The curves $C_1,\ldots,C_5$ are components in the fibres. Since the canonical class of $X'$ is ample and by the possibilities in \cite[Cor.~6.2]{CCML}, the curves $C_i$ are in distinct fibres. Moreover, each of these fibres is a chain of three rational curves: $A_{i,1}+C_i+A_{i,2}$ where the central curve is $C_i$, and the $A_{i,j}$ are $(-1)$-curves. One checks that $B$ is a $6$-section in this pencil, and intersects each $A_{i,j}$ in three points.

We notice that the image of the linear system $|K_{Y'}|$ in $Z'$ defines a pencil of genus $4$ curves with two base points (at the same point). Let $\Delta$ in $Z''$ be the image of $F_{\omega}+F_{\omega^2}$, where $F_{\omega}$ and $F_{\omega^2}$ are the two genus $4$ curves in $|K_{Y'}|$ fixed by the action of $\sigma$, and permuted by $\tau$. Then $\Delta$ is a smooth genus two curve. Let $\Gamma$ in $Z''$ be the image of the curve in $|K_{Y'}|$ which contains the five fixed points of $\tau$ (see description of fixed loci by $\tau$ above). Then $\Gamma$ is a nodal curve of arithmetic genus $2$. Notice that $\Delta$ intersects each of the $A_{i,j}$ at one point transversally, and $\Delta \cdot C_i=0$. Also, $\Gamma$ intersects each $A_{i,j}$ at one point transversally, and $\Gamma \cdot C_i=1$. Notice that $B^2=-6$, $\Delta^2=2$, and $\Gamma^2=2$.

Let $Z'' \to \mathbb{F}_m$ be the blow-down of one of $A_{i,1}$ or $A_{i,2}$ and then $C_i$ for each $i$, so blow-down ten times, where $\mathbb{F}_m$ is a Hirzebruch surface. If $\Gamma_0$ is the $(-m)$-curve in $\mathbb{F}_m$ and $F$ is a fibre, then one verifies
\[\Delta' \sim 2 \Gamma_0 + (m+3)F,  \quad \Gamma' \sim 3 \Gamma_0 + \frac{1}{2}(9+3m)F, \quad B' \sim 6 \Gamma_0 + (3m+7) F\]
where $\Delta',\Gamma',B'$ are the images of $\Delta,\Gamma,B$. Since none of these curves are equal to $\Gamma_0$, we have nonnegative intersection with $\Gamma_0$. Using that, one easily shows $m=1$. This implies that $\Gamma_0 \cdot B'=4$, $\Gamma_0 \cdot \Delta'=2$, and $\Gamma_0 \cdot \Gamma'=3$. Therefore, in $\P^2$, the image of $B$ is an irreducible degree $10$ curve with five $(3,3)$-points and one $4$-point, the image of $\Delta$ is a degree $4$ curve with a node, and the image of $\Gamma$ is a degree $6$ curve with six nodes and one triple point.
\end{proof}

\begin{figure}[htbp]
\centering
\includegraphics[width=16cm]{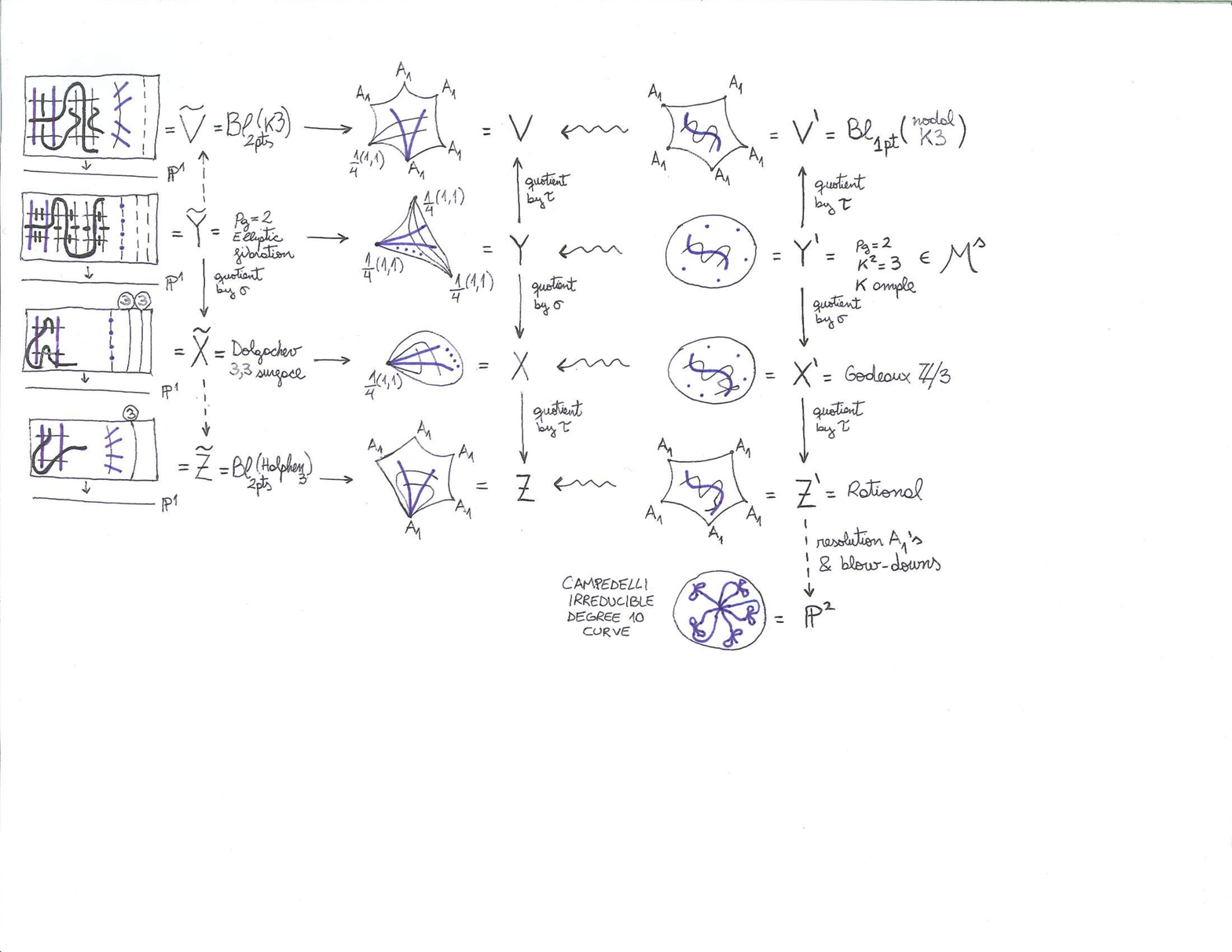}
\caption{Big picture for Sections \ref{s1} and \ref{s2}}
\label{f1}
\end{figure}

\begin{prop}
The minimal model of the quotient $Y'/\tau$ is a K3 surface with five $A_1$ singularities.
\label{V'}
\end{prop}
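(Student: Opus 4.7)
The strategy is to realise a smooth model of $Y'/\tau$ as the quotient by $\tau_0$ of a five-point blow-up of $Y'$, to identify a single $(-1)$-curve whose contraction yields a surface with $K\equiv 0$, and then to conclude via invariants. Let $\phi\colon \widetilde Y'\to Y'$ be the blow-up of the five isolated fixed points of $\tau_0$ on $Y'$, i.e.\ the points of $Y'\cap\Fix_1$, with exceptional curves $E_1,\dots,E_5$. These five points lie on the $\tau_0$-invariant curve $(-2,1,1)\in|K_{Y'}|$; a direct inspection shows that $\Fix_1\cap\Fix_2$ forces $x_0=x_1=x_2=0$, so they are disjoint from both $R_0$ and $R_0'$. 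The lift of $\tau_0$ to $\widetilde Y'$ has smooth fixed locus $\widetilde R_0+\sum_i E_i$, so the quotient $\pi\colon \widetilde Y'\to V''$ is a smooth surface, and the images $\bar E_i$ are smooth rational $(-2)$-curves resolving the five $A_1$ singularities of $V'=Y'/\tau$.

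The central step is a canonical class identity. Proposition~\ref{prop!Z3-fix-R} gives $K_{Y'}\equiv R_0+R_0'$, so
\[
K_{\widetilde Y'}=\phi^*K_{Y'}+\sum_i E_i=\widetilde R_0+\widetilde R_0'+\sum_i E_i.
\]
Comparing with the ramification formula $K_{\widetilde Y'}=\pi^*K_{V''}+\widetilde R_0+\sum_i E_i$ yields $\pi^*K_{V''}\equiv\widetilde R_0'$. Since $R_0'$ is setwise but not pointwise $\tau_0$-invariant, it is not in the ramification divisor, so $\pi^*\bar R_0'=\widetilde R_0'$ where $\bar R_0':=\pi(\widetilde R_0')$; hence $K_{V''}\equiv\bar R_0'$.

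Next I would analyse $\bar R_0'$. The elliptic curve $R_0'$ is setwise $\tau_0$-invariant (by the proof of Proposition~\ref{prop!Z3-fix-R}, its defining equations are cut out by $\tau_0$-invariant combinations of $y_1,y_2$), and its pointwise fixed locus is precisely the four points of $R_0\cap R_0'$. Riemann--Hurwitz applied to $R_0'\to\bar R_0'$ then gives $g(\bar R_0')=0$, and the projection formula with $\widetilde R_0'^{\,2}=R_0'^{\,2}=-2$ gives $\bar R_0'^{\,2}=-1$. Since the five blown-up points miss $R_0'$, the $(-1)$-curve $\bar R_0'$ is disjoint from each $\bar E_i$. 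Contracting $\bar R_0'$ produces a smooth surface $\bar V$ with $K_{\bar V}\equiv 0$ containing five disjoint $(-2)$-curves.

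Finally, $q(\bar V)=0$ follows from $q(Y')=0$ via $\pi_*\cO_{\widetilde Y'}=\cO_{V''}\oplus\cO_{V''}(-L)$, and $p_g(\bar V)=p_g(V'')$ equals the dimension of the $\tau_0$-invariant subspace of $H^0(K_{\widetilde Y'})=H^0(K_{Y'})$. In the parametrisation of Lemma~\ref{lem!Z3-curve-section}, this invariant subspace consists of the sections $(\alpha_0,\alpha_1,\alpha_2)$ with $\alpha_0+\alpha_1+\alpha_2=0$ and $\alpha_1=\alpha_2$, which is one-dimensional. Thus $\bar V$ is minimal with $p_g=1$, $q=0$, $K\equiv 0$, and Enriques--Kodaira identifies $\bar V$ as a K3 surface. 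Re-contracting the five $(-2)$-curves $\bar E_i$ then produces the asserted singular K3 with five $A_1$ singularities. The most delicate point is the canonical identity $K_{V''}\equiv\bar R_0'$; once in hand, the Riemann--Hurwitz and invariant checks are routine.
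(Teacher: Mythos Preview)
Your proof is correct and follows essentially the same approach as the paper: both arguments use $K_{Y'}\sim R_0+R_0'$ together with the double-cover formula to identify the image of $R_0'$ as a $(-1)$-curve whose contraction gives $K\equiv 0$, and both conclude via $p_g=1$, $q=0$. The only difference is cosmetic: the paper works directly on the singular quotient $V'=Y'/\tau$ (which already carries the five $A_1$ points) and contracts $\pi_{V'}(R_0')$ there, whereas you first pass to the smooth model $V''$ by blowing up the five isolated fixed points, contract $\bar R_0'$, and then re-contract the five $(-2)$-curves at the end. Your explicit verification that $p_g(V'')=\dim H^0(K_{Y'})^{\tau_0}=1$ is a detail the paper leaves implicit.
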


\begin{proof} We write $\pi_{V'} \colon Y' \to V'$ for the quotient map. By the adjunction formula for double covers, $K_{Y'}=\pi_{V'}^*K_{V'}+R_0$, by Proposition \ref{prop!Z3-fix-R} (see also \cite{KL}), we know that $K_{Y'}\cdot R_0=1$ and $R_0^2=-3$. Thus $\pi_{V'}^*K_{V'}^2=-2$, which implies that $K_{V'}^2=-1$. The residual component $R'_0$ of $C_0$ is also $\tau$-invariant, and $\pi_{V'}|_{R'_0}$ is ramified in the four points where $R'_0$ intersects $R_0$. Since $3=C_0^2=(R_0+R'_0)^2=-3+2\cdot 4 +{R'_0}^2$, we see that ${R'_0}^2=-2$, and hence $\pi_{V'}(R'_0)$ is a $(-1)$-curve. Thus $\pi_{V'}(R'_0)$ is contracted to get $V'_\text{min}$. Since $R_0+R'_0=C \in |K_{Y'}|$, it follows from the projection formula that $K_{V'_\text{min}}$ is numerically trivial, and we also know that $p_g(V'_\text{min})=1$, because we computed the invariants of $\tau$ above. Thus $V'_\text{min}$ is a K3 surface.
\end{proof}

In the right column of Figure \ref{f1}, we show the situation described in this section. In the next section we will describe and use the degenerated situation shown at the left side of Figure \ref{f1}.

\section{A stable $\Z/3$-Godeaux with $\frac14(1,1)$ singularity} \label{s2}
\subsection{Covering with three $\frac14(1,1)$ and Dolgachev $3,3$ surface}

We now consider the divisor $\cY$ in $\cM$ defined by $b_0=0$, previously considered in \cite{K}. According to Proposition \ref{prop!moduli}, $\cY$ is covered by an 8-dimensional parameter space divided out by a $\C^*$-action. Moreover, the space of admissible parameters is connected and so $\cY$ is irreducible.

\begin{lem} The general member $Y$ in $\cY$ has a free $\sigma$-action and $3\times\frac14(1,1)$ points forming an orbit under $\sigma$, one at each of the coordinate points $P_{y_i}$. After taking the quotient by $\sigma$, we get a stable $\ZZ/3$-Godeaux surface with a single $\frac14(1,1)$ singularity.
\end{lem}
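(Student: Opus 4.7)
The plan is to work in the affine chart of $\PP(1^3,2^3,3^3)$ at the coordinate point $P_{y_0}$, where the ambient isotropy is $\ZZ/2$ (since $y_0$ has weight $2$), and analyse the singularity of $Y$ there; $\sigma$-equivariance then propagates the answer to $P_{y_1}, P_{y_2}$. First I would check that $P_{y_0} \in Y$ precisely when $b_0 = 0$: of the $9+2$ defining equations, the only one not vanishing trivially at $P_{y_0}$ is $T_0$, which evaluates to $r_0(P_{y_0})\cdot y_0^2 = b_0$. Hence in $\cY$ the whole $\sigma$-orbit $\{P_{y_0}, P_{y_1}, P_{y_2}\}$ lies on $Y$.

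Next I would pull back to the double cover $\A^8 \to \A^8/(\ZZ/2)$ of this chart, where the local $\ZZ/2$ acts by negating the odd-weight coordinates $x_i, z_k$ while fixing $y_1, y_2$. Setting $y_0 = 1$ and $b_0 = 0$, the linear parts of the equations at the origin are $y_2$ (from $R_1$), $y_1$ (from $R_2$), $-z_0$ (from $S_0$), $b_1 y_1$ (from $T_0$), and the two $\sigma$-invariant linear forms $\sum x_i$ and $\sum z_i$; for generic $b_1 \neq 0$ these cut out exactly five independent relations, leaving $x_1, x_2, z_1$ as transverse coordinates on the $3$-dimensional Zariski tangent space of the lift $\Ytilde$. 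I then expand $T_0 = -z_1 z_2 + r_0 + s x_0 + r_1 r_2 x_0^2$ using the implicit-function solutions for $y_1, y_2, z_0, x_0, z_2$; its leading quadratic piece becomes
\[z_1^2 - b_1 x_2 z_1 + \alpha\, x_1^2 + \beta\, x_1 x_2 + \gamma\, x_2^2,\]
with $\alpha, \beta, \gamma$ explicit polynomials in the parameters, and a direct determinant computation shows this ternary quadric is non-degenerate for generic $Y \in \cY$. All remaining equations $R_0, S_1, S_2, T_1, T_2$ contribute only cubic or higher terms, so $\Ytilde$ acquires an ordinary double point at the origin.

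The key geometric observation is then that the ambient $\ZZ/2$ (acting by $-1$ on all three transverse coordinates) turns this $A_1$ into $\frac14(1,1)$, not $\frac12(1,1)$. Concretely, after diagonalising the quadric and parameterising the $A_1$ normalisation by $(x_1, x_2, z_1) = (u^2, v^2, uv)$, the ambient involution lifts to $(u,v) \mapsto (iu, iv)$ on $\C^2$; combined with the $A_1$-covering involution $(-u,-v)$, the full group is $\ZZ/4$ acting with weights $(1,1)$. By $\sigma$-equivariance, the same analysis identifies $\frac14(1,1)$ singularities at $P_{y_1}$ and $P_{y_2}$.

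For the remaining assertions, the three singular points form a single $\sigma$-orbit, so $\sigma$ is automatically free there; on the smooth locus freeness is inherited from a general $Y \in \cM$ by openness of the free-action locus, and I would additionally check directly that the defining equations have no solution on the $\sigma$-eigenspaces $E_\omega, E_{\omega^2} \cong \PP(1,2,3) \subset \PP(1^3,2^3,3^3)$ for generic parameters with $b_0 = 0$. The quotient $X = Y/\sigma$ then has a single $\frac14(1,1)$ point (a Wahl / $T$-singularity), and $K_X$ is ample by semicontinuity from the ample canonical class on the general member of $\cM$, so $X$ is KSBA-stable. The main obstacle is the quadratic calculation at the end of step two: one must carefully track how the cross term $-b_1 x_2 z_1$ arises from the implicit expansion of $y_1$ via $R_2$, and verify that no other equation imposes a second quadratic relation that would sharpen the singularity beyond $A_1$; the ensuing passage from $A_1$ to $\frac14(1,1)$ is also subtle, as one might naively guess $\frac12(1,1)$.
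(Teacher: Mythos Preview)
Your proposal is correct and follows essentially the same approach as the paper. Both argue by passing to the orbifold chart at a coordinate point $P_{y_i}$ (you at $P_{y_0}$, the paper at $P_{y_1}$, which are $\sigma$-equivalent), eliminating variables via the equations whose linear parts are nonzero, and identifying the residual equation as a rank-$3$ quadric in the $\frac12(1,1,1)$ chart, whence the $\frac14(1,1)$ singularity; the paper phrases the last step tersely as ``the tangent cone of $T_1-b_1R_0$ is a quadric of rank $3$'', while you spell out the $A_1/(\ZZ/2)\cong\frac14(1,1)$ computation via the $(u,v)\mapsto(iu,iv)$ lift. The only substantive point you do not cover is that the paper confirms nonsingularity of $Y$ away from the three $P_{y_i}$ (and irreducibility) by a computer check for specific parameter values, which is needed to conclude that the $\frac14(1,1)$ points are the \emph{only} singularities of the general member.
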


\begin{proof} We work in a neighbourhood of $P=P_{y_1}$. Relations $R_0$, $R_2$ and $S_1$ serve to eliminate variables $y_0,y_2,z_1$ in favour of local coordinates $x_1,x_2,z_2$. Thus $P$ is a hypersurface singularity inside the $\frac12(1,1,1)$ point induced by the $\C^*$-action on the ambient space. The tangent cone of $T_1-b_1R_0$ is a quadric of rank 3 in the local coordinates, and hence $P$ is a $\frac 14(1,1)$ singularity.

The nonsingularity and irreducibility of $Y$ follows by a computer calculation for choices of parameters. The fact that $\sigma$ is fixed point free is a standard computation.
\end{proof}






Under the degeneration $Y' \rightsquigarrow Y$, the three basepoints of $|K_{Y'}|$ deform to the $\frac14(1,1)$ singular points $P_0,P_1,P_2$ of $Y$. The resolution $\phi_{Y} \colon \Ytilde \to Y$ is given by a single blow up at each of $P_0,P_1,P_2$ with exceptional $(-4)$-curves $E_0$, $E_1$, $E_2$. Indeed,
\[K_{\Ytilde} \equiv \phi_{Y}^*(K_Y)-\frac12(E_0+E_1+E_2),\]
and $\Ytilde$ has an elliptic fibration over $\PP^1$ given by the vector space of global sections
\[H^0(K_{\Ytilde})=\left<x_0,x_1,x_2\right>/{(\sum x_i=0)},\]
where we abuse notation to write $x_i=\phi_{Y}^*(x_i)$. The image of a fibre under $\phi_{Y}$ is a curve of arithmetic genus $4$ with three nodes at basepoints $P_0$, $P_1$, $P_2$.

By the Kodaira formula for the canonical class, $K_{\Ytilde} \sim (\chi(\cO_{\Ytilde})-2)F+\gamma F$, where $F$ is a fibre and $\gamma$ is a term which is zero if and only if there are no multiple fibres. Since $\chi(\cO_{\Ytilde})=3$ and the fibration is induced by $|K_{\Ytilde}|$, we see that there are no multiple fibres, and the $E_i$ are $2$-sections of the fibration. The automorphism $\sigma$ acts on the base of the fibration, and the fibres corresponding to $(1,\omega,\omega^2)$ and $(1,\omega^2,\omega)$ are $\sigma$-invariant. According to Lemma \ref{lem!Z3-curve-section} above, each fibre has a birational model as the intersection of a quadric and a cubic in $\PP^3$.

We consider now the special case when $Y$ has an involution.
\begin{prop}\label{prop!I4-fibre}
The general surface $Y$ in $\cY^s$ with $\Sthree$-action has three fibres of type $I_4$ forming an orbit under $\sigma$, 24 $I_1$ fibres comprising eight orbits, and the two $\sigma$-invariant fibres $F_{\omega}$ and $F_{\omega^2}$ are nonsingular.
\end{prop}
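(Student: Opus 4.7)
The plan is to combine Noether's formula on $\tilde Y$ with an explicit description of the fibres from Lemma~\ref{lem!Z3-curve-section}, specialised to the family $\cY^s$ (i.e.\ $b_0=b_1=0$, $a_{11}=a_{22}$, $c_2=c_3$, $d_2=d_3$). Since $\tilde Y$ is a minimal properly elliptic surface with $\chi(\mathcal{O}_{\tilde Y})=3$, $K_{\tilde Y}^2=0$ and no multiple fibres (as already noted in the excerpt), Noether's formula gives $e(\tilde Y)=12\chi-K^2=36$, which equals the total Euler characteristic of the singular fibres of $\tilde Y\to\mathbb{P}^1$. For the two $\sigma$-invariant fibres $F_\omega, F_{\omega^2}$, I would substitute $b_0=0$ in equation (\ref{eq!multiple-fibre}) and check by the Jacobian criterion that the resulting $(2,3)$-complete intersection in $\mathbb{P}^3$ is smooth for generic values of $a_{11},a_{12},c_2,d_2$ (including at the coordinate points $P_{y_i}$, through which every fibre of $|K_Y|$ passes); its proper transform on $\tilde Y$ is then also smooth.

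Next, to locate the three $I_4$ fibres, I look at the $\sigma$-orbit $\{C_0,C_1,C_2\}$ corresponding to $\alpha_i=0$. Specialising Lemma~\ref{lem!Z3-curve-section}(ii) to $\cY^s$, the fibre $C_0$ is defined in $\mathbb{P}^3$ with coordinates $(T,Y_0,Y_1,Y_2)$, $T=t^2$, by
\begin{gather*}
Y_1 Y_2 + (2a_{11}-a_{12})\,T^2 = 0,\\
(Y_1-Y_2)\bigl(Y_0^2 + (a_{11}-d_2)\,T^2\bigr) = 0.
\end{gather*}
Both equations factor: in the hyperplane $\{Y_1=Y_2\}$ the quadric becomes $Y_1^2+(2a_{11}-a_{12})T^2=0$, splitting into two lines $L_1,L_2$; in each of the two planes $\{Y_0=\pm\sqrt{-(a_{11}-d_2)}\,T\}$ the quadric cuts out a smooth conic $Q_1$ or $Q_2$. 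A short computation shows $L_1\cap L_2=\{P_{y_0}\}$, $Q_1\cap Q_2=\{P_{y_1},P_{y_2}\}$, and each pair $L_i,Q_j$ meets transversely at a single smooth point of $Y$. So on $Y$ the fibre decomposes as $C_0=L_1+L_2+Q_1+Q_2$, whose only non-smooth meeting points are the three $\frac14(1,1)$ singularities.

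On $\tilde Y$, the branches $L_1,L_2$ have distinct tangent directions at $P_{y_0}$ and, similarly, $Q_1,Q_2$ at $P_{y_1},P_{y_2}$. Since the blowup of $\frac14(1,1)$ produces a $(-4)$-curve $E_k\cong\mathbb{P}^1$ parametrising tangent directions, distinct directions give distinct points of $E_k$, so $\tilde L_1,\tilde L_2$ (resp.\ $\tilde Q_1,\tilde Q_2$) become disjoint near $E_k$. The computation $F\cdot E_k=(\tilde L_1+\tilde L_2+\tilde Q_1+\tilde Q_2)\cdot E_k=2$ confirms that no $E_k$ appears as a fibre component, so $\tilde C_0=\tilde L_1+\tilde L_2+\tilde Q_1+\tilde Q_2$ has dual graph the $4$-cycle $\tilde L_1$--$\tilde Q_1$--$\tilde L_2$--$\tilde Q_2$--$\tilde L_1$. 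From $F\cdot\tilde L_i=0$ and the transverse intersections inside the cycle, adjunction yields $\tilde L_i^2=\tilde Q_j^2=-2$, so $\tilde C_0$ is a Kodaira fibre of type $I_4$. The $\sigma$-action permutes $C_0,C_1,C_2$ cyclically, so $C_1,C_2$ are also $I_4$.

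Finally, the three $I_4$'s contribute $12$ to $e(\tilde Y)$, leaving $24$ for the other singular fibres. As $\sigma$ fixes only the two base points $\omega,\omega^2$ (whose fibres are smooth), these remaining singular fibres lie in $\sigma$-orbits of size $3$; for general parameters in $\cY^s$ the discriminant of the elliptic fibration has only simple zeros away from the $I_4$ locus, so they are $24$ nodal fibres of type $I_1$ comprising $8$ orbits. The main technical obstacle is the local analysis at the Wahl singularities in the previous paragraph: one must check that the two branches of $C_0$ through each $P_{y_k}$ have distinct tangent directions (guaranteed generically by $2a_{11}-a_{12}\ne 0$ and $a_{11}-d_2\ne 0$) and that, after blowing up the $\frac14(1,1)$, those directions give distinct points on $E_k$, so the proper transforms assemble into a $4$-cycle of $(-2)$-curves rather than a less transverse degeneration.
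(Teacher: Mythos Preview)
Your argument follows essentially the same route as the paper: explicit factorisation of the fibre over $\alpha_0=0$ via Lemma~\ref{lem!Z3-curve-section}(ii) into four components arranged as a $4$-cycle, an Euler-number count leaving $24$ for the remaining singular fibres, and a direct smoothness check for $F_\omega,F_{\omega^2}$. Two small points: the constant in your second factor should be $M=2a_{11}-a_{12}-d_2$ (as in Proposition~\ref{prop!Z3-fix-R}) rather than $a_{11}-d_2$, and for the assertion that the $24$ residual fibres are genuinely $I_1$ the paper does not merely invoke genericity of the discriminant but performs an explicit relative Jacobian computation (degree count plus a computer check that the scheme is reduced with the $24$ points on distinct fibres).
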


\begin{proof}
Consider the fibre $F_0$ corresponding to $(0,1,-1)$ in $|K_{\Ytilde}|$. Since $b_0=0$ for $Y$ in $\cY^s$, we use the proof of Proposition \ref{prop!Z3-fix-R} to write $\psi_{Y}(F_0)$ as
\[
Lt^4 + y_1y_2=0,\ (Mt^4 - y_0^2)(y_1 - y_2)=0,
\]
where for brevity we write $L=2a_{11}-a_{12}$ and $M=2a_{11}-a_{12}-d_2$.
A short calculation shows that by generality assumption, $F_0$ breaks into four components given by
\[
\renewcommand{\arraystretch}{1.2}
\begin{array}{ll}
F_0^1\colon \lambda t^2+iy_1=y_1-y_2=0, & F_0^2\colon \lambda t^2-iy_1=y_1-y_2=0,\\
F_0^3\colon \mu t^2+y_0=Ly_0^2+My_1y_2=0, & F_0^4\colon \mu t^2-y_0=Ly_0^2+My_1y_2=0
\end{array}\]
where $\lambda^2=L$, $\mu^2=M$. From the equations, we see that $P_0\in F_0^1,F_0^2$, whereas $P_1,P_2\in F_0^3,F_0^4$.

Thus $F_0$ is an $I_4$ fibre $F_0=F_0^1+F_0^3+F_0^2+F_0^4$ where $F_0^1$ and $F_0^2$ are opposite sides, and the $2$-sections intersect $F_0$ as follows: $E_0$ intersects $F_0^1$ and $F_0^2$ transversally, while $E_1$ and $E_2$ intersect $F_0^3$ and $F_0^4$ transversally. See Figure \ref{f1}.

Standard Euler number considerations predict the general $\Ytilde$ has a further 24 nodes in fibres. We compute directly that for general $Y$, these correspond to 24 $I_1$ fibres. Indeed, let $S$ be the disjoint union $S=\bigsqcup\{C : C\in|K_Y|^0\}$, where $|K_Y|^0$ means we ignore the three $I_4$ fibres where one of $\al_i=0$. Then $S\subset\PP^3\times(\PP^1-\{3\text{ points}\})$ is the relative sextic complete intersection of quadric and cubic described in Lemma \ref{lem!Z3-curve-section}(i). Consider the affine piece $S_t$ where $t$ is nonzero, to ignore the three lines of nodes in $S$. The Jacobian subscheme $J$ of $S$ is codimension 2 in the ambient space, defined by $2\times2$ minors of the Jacobian matrix which has entries of degree $\left(\begin{smallmatrix}2 & 2 & 2 \\ 1 & 1 & 1\end{smallmatrix}\right)$. Thus $J$ has relative degree 7. Intersecting with $S_t$ decreases this degree by 3 because of the three nodes on every curve, and so the expected degree of $S_t\cap J$ is $6\times(7-3)=24$. We check using a computer that this intersection is indeed reduced, and comprises 24 points on distinct fibres.

A direct computation using equations \eqref{eq!multiple-fibre} shows that $F_\omega$ and $F_{\omega^2}$ are nonsingular for general $Y$.
\end{proof}


\begin{cor}
The quotient surface $\Xtilde=\Ytilde/\sigma$ is a Dolgachev $(3,3)$-surface, that is an elliptic surface with two fibres of multiplicity $3$. It also has one $I_4$ fibre, eight $I_1$ fibres and a $6$-section $E$.
\end{cor}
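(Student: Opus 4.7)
The plan is to descend the elliptic fibration on $\Ytilde$ through the free action of $\sigma$. First I verify that $\sigma$ acts freely on $\Ytilde$: the action on $Y$ is free, and $\sigma$ permutes $P_0,P_1,P_2$ cyclically, so the induced action on $\Ytilde$ permutes the three disjoint exceptional $(-4)$-curves $E_0,E_1,E_2$ without fixed points. Therefore $\Xtilde=\Ytilde/\sigma$ is smooth. Since the fibration $\Ytilde\to\PP^1$ is induced by $|K_{\Ytilde}|=\langle x_0,x_1,x_2\rangle/(\sum x_i=0)$, on which $\sigma$ cyclically permutes the coordinates, the fibration is $\sigma$-equivariant, with $\sigma$ acting on the base $\PP^1$ by an order-$3$ rotation whose two fixed points correspond exactly to $F_\omega$ and $F_{\omega^2}$. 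Passing to quotients, one obtains an elliptic fibration $\Xtilde\to\PP^1/\sigma\cong\PP^1$.

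For the singular fibres I appeal to Proposition \ref{prop!I4-fibre}. Over every point of the quotient base away from the two branch points, the $\sigma$-action identifies an entire orbit of three fibres of $\Ytilde$ with a single fibre of $\Xtilde$ isomorphic to any representative; this converts the single orbit of three $I_4$'s into one $I_4$ fibre and the eight orbits of $I_1$'s into eight $I_1$ fibres. At each branch point, the smooth elliptic curve $F_\omega$ (resp.\ $F_{\omega^2}$) inherits a free $\ZZ/3$-action from $\Ytilde$, so its quotient is again a smooth elliptic curve. By the standard multiplicity formula for free cyclic quotients of elliptic fibrations, these appear as multiple fibres of multiplicity~$3$ in $\Xtilde$: writing $\pi\colon\Ytilde\to\Xtilde$ for the quotient map, the base cover is ramified of index $3$ at each branch point, so $\pi^*\bar F_\omega=3F_\omega$, while $\pi_*F_\omega=3\,\pi(F_\omega)$ because $\pi|_{F_\omega}$ is étale of degree $3$; the projection formula $\pi_*\pi^*\bar F_\omega=3\bar F_\omega$ then forces $\bar F_\omega=3\,\pi(F_\omega)$, and similarly for $F_{\omega^2}$. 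As a sanity check, $e(\Ytilde)=3\cdot 4+24=36=3\cdot e(\Xtilde)$, consistent with a free triple quotient.

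Finally, the three $(-4)$-curves $E_0,E_1,E_2$ are permuted cyclically by $\sigma$ without fixed points, so they descend to a single smooth rational curve $E\subset\Xtilde$ with $\pi^*E=E_0+E_1+E_2$. For a generic fibre $\bar F$ of $\Xtilde$ one has $\pi^*\bar F=F+\sigma F+\sigma^2 F$ with three distinct summands, and the projection formula gives
\[
3\,(E\cdot\bar F)=(E_0+E_1+E_2)\cdot(F+\sigma F+\sigma^2 F)=3\cdot 3\cdot 2=18,
\]
so $E\cdot\bar F=6$ and $E$ is a $6$-section. The only delicate step is the multiplicity-$3$ assertion at the two $\sigma$-invariant fibres, which hinges on the freeness of $\sigma$ on $F_\omega$ and $F_{\omega^2}$; everything else follows mechanically from Proposition \ref{prop!I4-fibre} and the projection formula.
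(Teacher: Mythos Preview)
Your argument is correct and is exactly the reasoning the paper leaves implicit: in the paper this statement is recorded as an immediate corollary of Proposition~\ref{prop!I4-fibre} with no separate proof, and your write-up simply unpacks the descent of the elliptic fibration, the singular fibres, the two multiple fibres, and the $2$-sections through the free $\sigma$-quotient. The only comment is that you need not flag the freeness of $\sigma$ on $F_\omega$, $F_{\omega^2}$ as ``delicate'': it is automatic from the freeness of $\sigma$ on all of $\Ytilde$, and what you actually use from Proposition~\ref{prop!I4-fibre} at those fibres is their smoothness.
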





\begin{thm}\label{topoteo}
The topological fundamental group of a $\Z/3$-Godeaux surface is $\Z/3$.
\end{thm}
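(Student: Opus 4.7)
The plan is to reduce the theorem to showing that the étale $\Z/3$-cover $Y' \to X'$ satisfies $\pi_1(Y') = 1$: the exact sequence of fundamental groups then forces $\pi_1(X')$ to surject onto $\Z/3$ with trivial kernel, giving $\pi_1(X') = \Z/3$. I work with the degeneration $Y' \rightsquigarrow Y$ where $Y$ belongs to the $\Sthree$-symmetric divisor $\cY^s$, so that $Y$ has three $\frac14(1,1)$ singular points $P_0, P_1, P_2$ forming a single $\sigma$-orbit. The minimal resolution $\Ytilde$ introduces three exceptional $(-4)$-curves $E_0, E_1, E_2$ and carries the elliptic fibration $\Ytilde \to \PP^1$ of Proposition \ref{prop!I4-fibre}, in which the $E_i$ are $2$-sections and there are three $I_4$ fibres $F_0, F_1, F_2$ forming a $\sigma$-orbit. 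Fix small closed tubular neighbourhoods $N_i$ of each $E_i$; then $\partial N_i \simeq L(4,1)$, a generator $\mu_i$ of $\pi_1(\partial N_i) = \Z/4$ is a meridian of $E_i$, and I put $V := \Ytilde \setminus \bigsqcup_i \mathrm{int}(N_i)$.

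The first ingredient is the computation $\pi_1(\Ytilde) = 1$. By the preceding corollary, $\Xtilde = \Ytilde/\sigma$ is a Dolgachev $(3,3)$-surface, for which the standard computation for relatively minimal elliptic surfaces (two multiple fibres of multiplicity $3$ on $\PP^1$ base, together with enough vanishing cycles from the $I_1$ and $I_4$ fibres to kill the $\Z^2$ coming from a smooth fibre) yields $\pi_1(\Xtilde) = \Z/3$. Since $\sigma$ permutes the $P_i$ transitively, the action of $\sigma$ on $\Ytilde$ is free, and the short exact sequence of the étale triple cover $\Ytilde \to \Xtilde$ gives $\pi_1(\Ytilde) = 1$.

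The heart of the argument is the second step: showing that each meridian $\mu_i$ is already trivial in $\pi_1(V)$. By Proposition \ref{prop!I4-fibre}, the component $F_0^1 \cong \PP^1$ of the $I_4$ fibre $F_0$ meets $E_0$ transversally at a single point and is disjoint from $E_1 \cup E_2$. Hence $D_0 := F_0^1 \cap V$ is a closed $2$-disk sitting entirely in $V$, whose boundary circle lies in $\partial N_0$ and represents a meridian of $E_0$; thus $\mu_0 = 1$ in $\pi_1(V)$. Applying $\sigma$ cyclically produces analogous disks inside $F_1$ and $F_2$ that give $\mu_1 = \mu_2 = 1$. Van Kampen applied to $\Ytilde = V \cup \bigcup_i N_i$, with each $N_i$ simply connected, presents $\pi_1(\Ytilde)$ as $\pi_1(V)$ modulo the normal closure of the $\mu_i$, so combined with the previous step we deduce $\pi_1(V) = 1$.

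Finally, $Y'$ is obtained from $V$ by gluing in, for each $i$, a Milnor fibre $B_i$ of the $\Q$-Gorenstein smoothing of the $\frac14(1,1)$ singularity; each $B_i$ is a rational homology ball with $\pi_1(B_i) = \Z/2$ and boundary $L(4,1)$, the gluing map $\pi_1(\partial B_i) = \Z/4 \twoheadrightarrow \Z/2$ being the natural surjection. A last application of van Kampen presents $\pi_1(Y')$ as the iterated pushout of $\pi_1(V) = 1$ with three copies of $\Z/2$ amalgamated over $\Z/4$; since $\Z/4 \to 1$ is trivial and $\Z/4 \twoheadrightarrow \Z/2$ is surjective, the generator of each $\Z/2$ factor is identified with the identity, and $\pi_1(Y') = 1$. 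The decisive difficulty is the meridian-triviality step: it depends essentially on the combinatorics of how the $2$-sections meet the $I_4$ fibres, namely that each $E_i$ meets two full $\PP^1$-components of exactly one $I_4$ fibre without sharing those components with the other $E_j$. This pattern is a direct consequence of the $\Sthree$-symmetry of $\cY^s$, and without it one would only obtain relations of the form $\mu_i \mu_j^{\pm 1} = 1$, from which triviality does not follow.
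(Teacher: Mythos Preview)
Your argument is correct and is exactly the variant the paper sketches in its final sentence: run the Seifert--van Kampen strategy on the cover $Y'\rightsquigarrow Y$ rather than on the quotient $X'\rightsquigarrow X$, using the $I_4$ components of Proposition~\ref{prop!I4-fibre} to trivialise the meridians of the $(-4)$-curves before gluing in the Milnor fibres. (Your link and Milnor-fibre groups $\Z/4\twoheadrightarrow\Z/2$ for $\tfrac14(1,1)$ are the correct ones; the paper's stated $\Z/16\twoheadrightarrow\Z/4$ appear to be a slip, but only surjectivity matters, so neither version is affected.)
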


\begin{proof}
The action of $\sigma$ in a $\Q$-Gorenstein degeneration $Y' \rightsquigarrow Y$ over a disc is such that the quotient is a $\Q$-Gorenstein degeneration $X' \rightsquigarrow X$ over a disc. From $X$ to $X'$, we are $\Q$-Gorenstein smoothing the $\frac{1}{4}(1,1)$ singularity of $X$. On the other hand, the resolution of $X$ is a Dolgachev $(3,3)$-surface $\Xtilde$, and so $X$ has topological fundamental group $\pi_1(X)$ isomorphic to $\Z/3$. 

We now follow the strategy in \cite{LP07}. Using Seifert--van-Kampen theorem, we have that $\pi_1(X) \simeq \Big(\pi_1(X \setminus{E}) * \pi_1(U)\Big) /\langle \gamma \rangle_n$ where $U$ is a small neighborhood of the singularity in $X$, and so $\pi_1(U)$ is trivial, $\gamma$ is a loop around $E$ in $X \setminus E$, and $\langle \gamma \rangle_n$ is the smallest normal subgroup in $\pi_1(X \setminus{E})$ containing $\gamma$. But by Proposition \ref{prop!I4-fibre}, there is a $\P^1$ from the $I_4$ fibre in $\Xtilde$, which intersects the $6$-section $E$ transversally at one point, and so $\gamma$ is trivial in $\pi_1(X \setminus{E})$. Therefore $\pi_1(X \setminus{E}) \simeq \Z/3$. Now, since $X' \rightsquigarrow X$ is $\Q$-Gorenstein smoothing, the surface $X'$ is homeomorphic to $X \setminus{E}$ union the Milnor fibre $M_f$ of $\frac{1}{4}(1,1)$ corresponding to the smoothing, glued along the corresponding link $L$. So we apply Seifert--van-Kampen theorem again to $X'$ using that decomposition. Since the generator $\gamma$ of $\pi_1(L)$ is trivial in $\pi_1(X \setminus{E})$ and the inclusion induces $\pi_1(L)=\Z/16 \twoheadrightarrow \pi_1(M_f)=\Z/4$, we obtain that $\pi_1(X') \simeq \Z/3$. Similarly, one can use the same strategy with the $\Q$-Gorenstein degeneration $Y' \rightsquigarrow Y$ and the elliptic fibration $\Ytilde$, to prove that $Y'$ is simply-connected, and so $\pi_1(X') \simeq \Z/3$.
\end{proof}

\subsection{Stable Campedelli double plane, K3 quotient and its flipped family} \label{flipped}

As before, since the involutions $\tau_i$ are conjugate under the action of $\sigma$, we use $\tau=\tau_0$ for our computations. The following explains how the involution descends to $\Xtilde$.

\begin{prop}
The action of $\tau$ on $\Xtilde$ has two invariant fibres, and swaps the two fibres of multiplicity three. One of the invariant fibres is the $I_4$ fibre, of which two components are fixed pointwise. The other invariant fibre is nonsingular and contains four isolated fixed points.
\end{prop}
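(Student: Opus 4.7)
The plan is to work fibre by fibre, using the $\tau$-action on the base $\PP^1$ of $\Ytilde \to \PP^1$ together with the explicit structure of each invariant fibre.

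First, the fibration is given by $H^0(K_{\Ytilde})=\langle x_0,x_1,x_2\rangle/(\sum x_i)$, so its base $\PP^1$ is the line $\{\alpha_0+\alpha_1+\alpha_2=0\}$ in $\PP^2$. Both $\sigma$ and $\tau$ act by permuting the $\alpha_i$, and a direct computation shows that the $\tau$-fixed points on this $\PP^1$ are $(0:1:-1)$ (the $I_4$ fibre) and $(-2:1:1)$, whereas the $\sigma$-fixed points $(1:\omega:\omega^2)$ and $(1:\omega^2:\omega)$ are interchanged by $\tau$. Any $\tau$-invariant fibre of $\Xtilde$ lifts to a $\tau$-stable $\sigma$-orbit on this $\PP^1$; the relation $\tau\sigma\tau^{-1}=\sigma^{-1}$ forces every such orbit of size $3$ to contain a $\tau$-fixed point, so the two $\tau$-invariant fibres of $\Xtilde$ come from the $\sigma$-orbits of $F_{(0,1,-1)}$ and $F_{(-2,1,1)}$. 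The swap of the two $\omega$-fibres downstairs gives the swap of the two multiplicity-$3$ fibres.

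Next I would study the $I_4$ fibre using the decomposition $F_{(0,1,-1)}=F_0^1+F_0^3+F_0^2+F_0^4$ from the proof of Proposition \ref{prop!I4-fibre}. Each component is set-wise $\tau$-invariant, as $\tau$ preserves each defining equation. On $F_0^1$ the constraints $y_1=y_2$ and $\lambda t^2+iy_1=0$ leave $(t,y_0)\in\PP(1,2)$ as parameters, and the induced action $(t,y_0)\mapsto(-t,y_0)$ is trivial thanks to the scaling $(t,y_0)\sim(-t,y_0)$ in $\PP(1,2)$; hence $F_0^1$ and analogously $F_0^2$ are fixed pointwise. On $F_0^3$ (and $F_0^4$) the induced action is a nontrivial involution of $\PP^1$ and a short calculation shows its two fixed points are precisely the nodes where the component meets $F_0^1$ and $F_0^2$. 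Since $\sigma$ acts freely and permutes the three $I_4$ fibres, $F_{(0,1,-1)}$ maps isomorphically to the $I_4$ fibre on $\Xtilde$, which therefore has exactly two components fixed pointwise by $\tau$.

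Finally, for the remaining invariant fibre the Euler-number count in Proposition \ref{prop!I4-fibre} forces $F=F_{(-2,1,1)}$ to be a smooth elliptic curve for general $Y\in\cY^s$. The involution $\tau|_F$ cannot be trivial, because the $\tau$-fixed locus on $\Xtilde$ is a rational curve (the image of $R_0$ in Proposition \ref{prop!Z3-fix-R}) together with isolated points and the fixed components coming from the $\frac14(1,1)$ resolution, none of which is an elliptic curve. Any nontrivial involution on a smooth elliptic curve has exactly four fixed points, and these descend to four isolated fixed points on the corresponding fibre of $\Xtilde$. The main technical obstacle is the weighted-scaling analysis in the $I_4$ paragraph, which must rule out a nontrivial induced action on $F_0^1,F_0^2$ while confirming a nontrivial one on $F_0^3,F_0^4$; once the correct component structure and scaling in $\PP(1,2)$ are in place, the rest of the argument is essentially formal.
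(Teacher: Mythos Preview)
Your overall strategy is sound and your analysis of the $I_4$ fibre is correct, but there is one genuine gap in the last paragraph. You assert that ``any nontrivial involution on a smooth elliptic curve has exactly four fixed points''; this is false as stated, since translation by a nonzero $2$-torsion point is a nontrivial involution with no fixed points. You must therefore show that $\tau|_{F_{(-2,1,1)}}$ actually has a fixed point before invoking the $0$-or-$4$ dichotomy. This is easy to repair: the action on the ambient $\PP^3$ is $(t^2,y_0,y_1,y_2)\mapsto(t^2,y_0,y_2,y_1)$, whose $(+1)$-eigenspace is the hyperplane $y_1=y_2$; the degree-$6$ curve $F_{(-2,1,1)}$ is irreducible and not contained in this hyperplane, hence meets it in a nonempty finite set of $\tau$-fixed points. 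A related imprecision is your appeal to Proposition~\ref{prop!Z3-fix-R} to describe the fixed locus on $\Xtilde$: that proposition treats the smooth surface $X'$, not the degenerate $\Xtilde$, so it does not directly control the fixed locus here.

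By contrast, the paper's proof bypasses all of this by intersecting $Y$ with the two pieces $\Fix_1$, $\Fix_2$ of the ambient fixed locus already computed in Section~\ref{sec!Sthree}. This immediately gives $Y\cap\Fix_2=F_0^1+F_0^2$ (hence the two pointwise-fixed components of the $I_4$) and $Y\cap\Fix_1=P_0\cup\{\text{four points on }F_{(-2,1,1)}\}$ (hence the four isolated fixed points on the nonsingular invariant fibre), with the smoothness of $F_{(-2,1,1)}$ checked directly via Lemma~\ref{lem!Z3-curve-section}. Your fibre-by-fibre argument recovers the same conclusions and has the merit of being self-contained, but it requires the extra work on the elliptic-curve involution that the paper's approach avoids.
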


\begin{proof}
From Proposition \ref{prop!I4-fibre}, the intersections of $\Fix_i$ with $Y$ are:
\begin{align*}
Y\cap\Fix_1 &= P_0\cup\text{four points on the fibre }F_{(-2,1,1)},\\
Y\cap\Fix_2 &= F_0^1 + F_0^2=: R_0.
\end{align*}
We can check that $F_{(-2,1,1)}$ is nonsingular directly using Lemma \ref{lem!Z3-curve-section}. Moreover, the two $\sigma$-invariant fibres are exchanged by $\tau$. The involution must act nontrivially on the $6$-section $E$, fixing two points which are the intersection points of $E$ with $\Fix_2$ from above.

The 2-sections $E_1$ and $E_2$ on $\Ytilde$ form an orbit under $\tau$, while $E_0$ is invariant with two fixed points at the intersection with the $I_4$ fibre.
\end{proof}

\begin{cor}
The quotient $\Xtilde/\tau$ has four $A_1$ singularities on one fibre. Its resolution is the blow-up at two points of a relatively minimal rational elliptic surface with one fibre of multiplicity three. The singular fibres are $I_2$, $I_0^*$, and four $I_1$. Thus a contraction of $\Xtilde/\tau$ to $\PP^2$ gives a Halphen pencil of index 3.
\end{cor}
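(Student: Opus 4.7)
The plan is to analyse the quotient map $\pi\colon\Xtilde\to Z:=\Xtilde/\tau$ by passing to the blowup $\hat X\to\Xtilde$ at the four isolated fixed points of $\tau$. By the previous proposition the fix locus of $\tau$ on $\Xtilde$ consists of two $(-2)$-curves $G_1,G_2$, which are the opposite sides of the $I_4$ fibre and are fixed pointwise, together with four isolated fixed points on the nonsingular invariant fibre $F_1$. Since $F_1$ is a smooth fibre of an elliptic fibration it is an elliptic curve, and $\tau$ acts on it as an involution with four fixed points, forcing this action to be the hyperelliptic involution. Let $G_3,G_4$ denote the remaining two components of the $I_4$. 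Each of $G_3,G_4$ meets $G_1$ and $G_2$ transversally in a single node, and these two nodes are $\tau$-fixed (intersections of an invariant curve with a pointwise-fixed curve); since an involution of $G_i\cong\PP^1$ has exactly two fixed points, we conclude that $G_3$ and $G_4$ are $\tau$-invariant (not swapped) and $\tau$ acts on each with fixed points precisely those two nodes. Consequently $Z$ has exactly four $A_1$ singularities, all on the image of $F_1$.

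On $\hat X$ the lifted involution has smooth one-dimensional fix locus, namely the proper transforms of $G_1,G_2$ together with the four exceptional curves $e_1,\ldots,e_4$ (pointwise fixed, since $\tau$ acts as $-\mathrm{id}$ on each tangent plane at an isolated fixed point). Hence $\tilde Z:=\hat X/\tau$ is smooth and coincides with the minimal resolution of $Z$. The standard double-cover formulas $\pi^*\pi(C)=2C$ for a $\tau$-fixed curve $C$ and $\pi^*\pi(C)=C$ for a $\tau$-invariant non-fixed curve yield at once: $\pi(G_1),\pi(G_2)$ are $(-4)$-curves; $\pi(G_3),\pi(G_4)$ are $(-1)$-curves, each meeting both $(-4)$-curves transversally once; the image of the proper transform of $F_1$ is a smooth rational $(-2)$-curve (the hyperelliptic quotient of an elliptic curve); and each $\pi(e_k)$ is a $(-2)$-curve meeting this $\PP^1$ transversally once. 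Pushing forward the $\Xtilde$-fibration to $\tilde Z\to\PP^1/\langle\tau\rangle\cong\PP^1$, the fibre over the image of the $I_4$ has class $\pi(G_1)+\pi(G_2)+2\pi(G_3)+2\pi(G_4)$ (the factor $2$ on $G_3,G_4$ is forced by the base ramification at this point), and the fibre over the image of $F_1$ has class $2\pi(\tilde F_1)+\sum_k\pi(e_k)$, which is a Kodaira $I_0^*$ star. The two multiplicity-$3$ fibres of $\Xtilde$ are swapped by $\tau$ and descend to a single multiplicity-$3$ fibre downstairs, while the eight $I_1$ fibres pair up into four $I_1$ fibres.

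Contracting the two $(-1)$-curves $\pi(G_3),\pi(G_4)$ yields the relatively minimal model of $\tilde Z$: the $I_4$-image collapses to two $(-2)$-curves meeting transversally at two points, i.e.\ an $I_2$ fibre, while the other fibres are untouched. The Euler total $2+6+4=12$ of singular fibres gives $\chi(\cO)=1$, and since an Enriques surface cannot have a multiple fibre of odd multiplicity, this relatively minimal elliptic surface is rational. A relatively minimal rational elliptic surface with a unique multiple fibre of multiplicity $m$ is by definition a Halphen surface of index $m$, and contracting nine disjoint $(-1)$-sections to $\PP^2$ exhibits the fibration as a Halphen pencil of index $m$; here $m=3$. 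The main technical step is the precise description of the $\tau$-action on each component of the $I_4$ fibre and on $F_1$; once this is in hand the fibre-type identifications and the Euler bookkeeping above are routine.
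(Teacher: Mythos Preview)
Your proof is correct and follows the approach the paper leaves implicit (and spells out in the parallel computation for $\Ytilde/\tau$ in the next proposition): identify the fixed locus component by component on the $I_4$ and on the smooth invariant fibre, read off the self-intersections of the image curves via the double-cover formulas, contract the two $(-1)$-curves to get $I_2$, and count Euler numbers. One small slip in your last sentence: on a Halphen surface of index $3$ there are no sections---any $(-1)$-curve $E$ satisfies $E\cdot F = 3$ since $-K$ is the reduced multiple fibre---so ``nine disjoint $(-1)$-sections'' should read ``a sequence of nine blow-downs of $(-1)$-curves (possibly infinitely near) to $\PP^2$''; this does not affect the argument.
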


We now describe the corresponding stable Campedelli double plane which is an analogue of Proposition \ref{campe}. We introduce some notation. Let $\widehat{X} \to \Xtilde$ be the blow-up at the four points fixed by $\tau$, and let $\Ztilde$ be the minimal resolution of $\Xtilde/\tau$. Let $\pi_{\Ztilde} \colon \widehat{X} \to \Ztilde$ be the quotient by $\tau$. Let $B_1,B_2$ be the image by $\pi_{\Ztilde}$ of the fixed curve by $\tau$ in $\Xtilde$, and let $C_1,C_2,C_3,C_4$ be the image of the exceptional curves of $\widehat{X} \to \Xtilde$. Thus, $B_1+B_2+C_1+\ldots+C_4$ is the branch curve of $\pi_{\Ztilde}$. Let $A_1+A_2+B_1+B_2$ be the image of the $I_4$ fibre by $\pi_{\Ztilde}$, and so $A_1,A_2$ and $B_1,B_2$ are opposite sides of the new $I_4$, and $A_i^2=-1$, $B_i^2=-4$. Let $E_0$ be the image by $\pi_{\Ztilde}$ of the $(-4)$-curve from $E$ in $\Xtilde$. Let $\Gamma$ be the image by $\pi_{\Ztilde}$ of proper transform of the fibre in $\Xtilde$ which contains the four fixed points by $\tau$. Finally, let $\Delta$ be the image by $\pi_{\Ztilde}$ of the two multiple fibres in $\Xtilde$.

\begin{prop}
The linear system defined by $A_1+E_0+A_2 \sim 3K_{\Ztilde} + E_0+B_1+B_2$ defines a genus $0$ fibration $f \colon \Ztilde \to \P^1$, which pulls back to $\Xtilde$ as a genus $2$ fibration. Each $C_i$ belongs to one fibre of $f$ which is formed by $(-1)$-curves $A_{i,1}$, $A_{i,2}$ and $C_i$. The blow-down of $A_1,A_{1,1},\ldots,A_{4,1}$ and $E_0,C_1,\ldots,C_4$ is the blow-up at one point of $\P^2$. After blowing-down to $\P^2$ the curves $B_1$, $B_2$ become two quintics such that $B_1+B_2$ has four $(3,3)$-points, one $4$-point, and one singular point of the form $\{(y^2-x^2)(y^2-x^4)=0\}$ locally at $(0,0) \in \C_{x,y}^2$. We also realize $\Delta$ as a quartic with two nodes, and $\Gamma$ as a sextic with two triple points and four double points.
\label{stablecampe}
\end{prop}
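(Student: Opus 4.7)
The plan is to mirror the proof of Proposition \ref{campe}, adjusted for the $I_4$ configuration and the exceptional $(-2)$-curve $E_0$ inherited from the resolved $\frac14(1,1)$-singularity. My first step is to verify the linear equivalence $A_1+E_0+A_2 \sim 3K_{\Ztilde}+E_0+B_1+B_2$ by pulling back along $\pi_{\Ztilde}$. Using $K_{\widehat X} = \pi_{\Ztilde}^* K_{\Ztilde} + R$ with $R$ the ramification divisor (the two pointwise-fixed $I_4$-components $G_1,G_2$ plus the four exceptional curves of $\widehat X\to\Xtilde$), together with Kodaira's formula $3K_{\Xtilde} \sim F$ on the Dolgachev $(3,3)$-surface, a direct computation shows $\pi_{\Ztilde}^*(3K_{\Ztilde}+B_1+B_2) = G_3+G_4 = \pi_{\Ztilde}^*(A_1+A_2)$, where $G_3,G_4$ are the remaining components of the $I_4$ on $\widehat X$.

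Next I would compute intersection numbers on $\Ztilde$ via the double cover formula: $E_0^2 = -2$, $A_i^2 = -1$, $B_i^2 = -4$, $A_i \cdot B_j = 1$, $E_0 \cdot A_i = E_0 \cdot B_i = 1$, and $A_1 \cdot A_2 = B_1 \cdot B_2 = 0$. These give $(A_1+E_0+A_2)^2 = 0$ and $(A_1+E_0+A_2)\cdot K_{\Ztilde} = -2$, hence arithmetic genus zero. Combined with $K_{\Ztilde}^2=-2$ and Noether ($\chi(\cO_{\Ztilde})=1$), Riemann--Roch yields $h^0\ge 2$, and an argument as in \cite[Lemma~6.1]{CCML} identifies $|A_1+E_0+A_2|$ as a base-point-free pencil, defining the genus $0$ fibration $f$. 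Pulling back to $\widehat X$, the divisor $G_3+\widetilde{E}+G_4$ has $p_a=2$ and self-intersection $0$, so the pullback of $f$ to $\Xtilde$ is a genus $2$ fibration. Since $C_i\cdot(A_1+E_0+A_2)=0$, each $C_i$ lies in a fibre of $f$; the combinatorics of fibres in a rational genus-$0$ fibration then force the fibre containing $C_i$ to be a chain $A_{i,1}+C_i+A_{i,2}$ with $A_{i,j}$ a $(-1)$-curve.

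For the blow-down sequence, I would first contract $A_1, A_{1,1},\ldots, A_{4,1}$, descending $f$ to a ruling on a Hirzebruch surface $\mathbb{F}_m$. Computing the classes of the images of $B_1, B_2, \Delta, \Gamma$ and intersecting with the negative section of $\mathbb{F}_m$ should pin down $m$, exactly as in Proposition \ref{campe}. Successive contractions of $E_0, C_1, \ldots, C_4$ then reach a one-point blowup of $\PP^2$, and one further contraction lands in $\PP^2$. The degrees and singular multiplicities of the images of $B_1, B_2, \Delta, \Gamma$ are read off from their intersection numbers tracked through each contraction.

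The main obstacle I anticipate is the identification of the non-ordinary singularity $\{(y^2-x^2)(y^2-x^4)=0\}$ of $B_1+B_2$ at the image of the collapsed $I_4$-chain. Each quintic contributes two smooth branches through this point, with tangency orders $1$ and $2$ recorded by how $B_1$ and $B_2$ meet the successively contracted curves $A_1, E_0, A_2$. To establish the precise analytic type, I would either perform a local analytic analysis after each blow-down along the chain, or compute the local intersection multiplicities of $B_1, B_2$ with $A_1+E_0+A_2$ on $\Ztilde$ and read off the analytic type at the image point. This degenerate singularity is the essential new phenomenon compared with Proposition \ref{campe}, and replaces the familiar $(3,3)$-plus-$4$-point picture of the smooth Campedelli double plane.
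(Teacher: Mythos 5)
Your route is essentially the paper's: the same intersection numbers ($E_0^2=-2$, $A_i^2=-1$, $B_i^2=-4$, $A_i\cdot B_j=E_0\cdot A_i=E_0\cdot B_i=1$, $A_1\cdot A_2=B_1\cdot B_2=0$), the identification of $|A_1+E_0+A_2|$ as a base-point-free genus $0$ pencil pulling back to a genus $2$ fibration, the \cite{CCML}-style analysis of the fibres containing the $C_i$ (here you should say explicitly, as the paper does, that ampleness of $K_X$ forces every $(-2)$-curve of $\Xtilde$ to meet the $6$-section $E$; ``combinatorics of genus-$0$ fibrations'' alone does not exclude other fibre configurations), then blow-downs to $\F_1$ and $\P^2$ with a local analysis of the new singularity. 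Your pullback verification of $A_1+A_2\sim 3K_{\Ztilde}+B_1+B_2$ and the computation $p_a(G_3+\widetilde E+G_4)=2$ are correct and make explicit what the paper only asserts, and your plan for identifying the local form $\{(y^2-x^2)(y^2-x^4)=0\}$ at the image of the contracted chain is sound (note, though, that $A_2$ is never contracted -- it becomes a fibre of the ruling -- so the relevant contracted curves are $A_1$ and then $E_0$ only).

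Two concrete gaps. First, the blow-down bookkeeping is wrong as stated: contracting only $A_1,A_{1,1},\dots,A_{4,1}$ does not yield a Hirzebruch surface, since the fibres $E_0+A_2$ and $C_i+A_{i,2}$ are still reducible and $K^2$ is only $3$ at that stage. The relatively minimal model $\F_m$ (with $K^2=8$) is reached exactly after the ten contractions $A_1,A_{1,1},\dots,A_{4,1},E_0,C_1,\dots,C_4$, and it is only there that intersecting the images of $B_1+B_2$, $\Delta$, $\Gamma$ with the negative section pins down $m=1$; so your intermediate determination of $m$ ``after five contractions'' does not parse, although your final endpoints (one-point blow-up of $\P^2$, then $\P^2$) are the correct ones. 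Second, and more importantly, you never address what the paper calls ``the point'': the total degree $10$ of the image of $B_1+B_2$ is easy, but the individual degrees of $B_1$ and $B_2$ are not determined by the intersection numbers you list. Each $B_k$ is a $3$-section of $f$ with $B_k^2=-4$ and $K_{\Ztilde}\cdot B_k=2$, and a priori $B_k\cdot A_{i,1}\in\{0,1,2,3\}$; equivalently, one must decide how the $4$-point splits between $B_1$ and $B_2$ under the last contraction. The paper rules out all but one distribution (each $B_k$ meets two of the $A_{i,1}$ twice, possibly infinitely near, and the other two once), and only this gives two quintics; ``reading degrees off intersection numbers tracked through each contraction'' presupposes exactly the data that this case analysis supplies, so as written your argument cannot conclude that each $B_k$ has degree $5$ rather than, say, $4$ and $6$.
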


\begin{proof}
The linear system $|A_1+E_0+A_2|$ defines a rational fibration in $\Ztilde$. The curve $B_1+B_2$ is a $6$-section, and so the pull back is a fibration of genus $2$ curves. Notice that the curves $C_i$ are in fibres of $|A_1+E_0+A_2|$. The options for the irreducible components of the fibre containing $C_i$ are as in \cite[Cor.~6.2]{CCML}. In $\Xtilde$ a $(-2)$-curve has to intersect the $6$-section $E$ since $K_{X}$ is ample. Therefore, as in Proposition \ref{campe}, the only possible irreducible components for a fibre containing $C_i$ are two $(-1)$-curves $A_{i,1}$, $A_{i,2}$, so that they form a chain with $C_i$ as central curve. Since after blowing down $A_1$, $A_2$ we obtain a relatively minimal Halphen fibration of index $3$, one can check that $\Gamma$ intersects each $A_{i,j}$ transversally at one point. Notice that $\Gamma \cdot C_i=1$ as well, and $\Gamma$ is a $3$-section of $|A_1+E_0+A_2|$. Also, $\Delta \cdot A_{i,j}=1$ for all $i,j$, and $\Delta$ is a $2$-section of $|A_1+E_0+A_2|$. We also have $B_j \cdot A_i=1$, $B_i \cdot E_0=1$, $B_1 \cdot B_2=0$, $\Delta^2=0$, and $\Gamma^2=-2$. Since $K_{\Ztilde}^2=-2$, after blowing down  $A_1,A_{1,1},\ldots,A_{4,1}$ and $E_0,C_1,\ldots,C_4$ we arrive to a Hirzebruch surface $\F_m$. Since we know the self-intersections of the images of $\Delta$, $\Gamma$, and $B_1+B_2$, we obtain as in Proposition \ref{campe} that $m=1$.

Now the point is that $B_1+B_2$ has degree $10$ in $\P^2$, but there are possibilities for the degrees of $B_1$ and $B_2$. Since $A_1$, $E_0$ gives nodes to $B_1$ and $B_2$, the degree cannot be smaller than $3$. The possibilities of $B_k \cdot A_{i,1}$ are $0,1,2,3$. We check that the only possibility that works is $B_k$ intersects two $A_{i,1}$ at two points each (possible infinitely near) and the other two at one point each. Therefore, each $B_k$ becomes a quintic in $\P^2$. The image in $\P^2$ of $\Delta$ is a quartic with two nodes, and the image of $\Gamma$ is a sextic with two triple points and four double points.
\end{proof}

\begin{prop}
The minimal model of the quotient $\Ytilde/\tau$ is a K3 surface, which has an elliptic fibration with singular fibres $I_2$, $I_4$ and $I_0^*$, a $(-2)$-curve which is a $2$-section, and a $(-4)$-curve which is a $4$-section.
\label{V}
\end{prop}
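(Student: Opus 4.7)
The plan is to mirror the proof of Proposition~\ref{V'}, adapted to the stable degenerate setting in which the smooth elliptic curve $R_0'$ of $Y'$ has broken into the two rational components $F_0^3+F_0^4$ of the $I_4$ fibre $F_0$. I will apply the double-cover adjunction formula to $\pi_V\colon\Ytilde\to V:=\Ytilde/\tau$, contract the residual $(-1)$-curves that replace $\pi_{V'}(R_0')$ to reach the minimal model $W$, and verify via Noether that $W$ is a K3 with the stated elliptic fibration and multisections.

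By Proposition~\ref{prop!I4-fibre} and the eigenspace analysis preceding it, the codimension-one $\tau$-fixed locus on $\Ytilde$ is $R_0=F_0^1+F_0^2$ (the opposite pair of $(-2)$-components in $F_0$), and the only isolated $\tau$-fixed points are four points on the smooth invariant fibre $F_{(-2,1,1)}$, yielding four $A_1$ singularities on $V$. Since $K_{\Ytilde}$ is numerically a fibre, $K_{\Ytilde}^2=0$, $K_{\Ytilde}\cdot R_0=0$, and $R_0^2=-4$, so $K_{\Ytilde}\equiv\pi_V^*K_V+R_0$ yields $K_V^2=-2$. The residual components $F_0^3,F_0^4$ of $F_0=R_0+R_0'$ are each $\tau$-invariant but not pointwise fixed, with $\tau$ restricting to an involution whose two fixed points are the intersections with $F_0^1,F_0^2$. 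By Riemann--Hurwitz the images $G_i:=\pi_V(F_0^i)$ for $i=3,4$ are smooth rational curves, and since $F_0^i$ is not in the ramification, $\pi_V^*G_i=F_0^i$; the projection formula then gives $G_i^2=-1$, and $G_3\cdot G_4=0$ because $F_0^3\cdot F_0^4=0$.

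Next I contract $G_3$ and $G_4$ on the minimal resolution $\widetilde V$ of $V$ to obtain a smooth surface $W$ with $K_W^2=0$. To verify $K_W\equiv 0$, push forward $K_{\Ytilde}\equiv F_0=R_0+R_0'$: since $\pi_V|_{F_0^i}$ has degree $1$ for $i=1,2$ and degree $2$ for $i=3,4$, we get $\pi_{V*}K_{\Ytilde}=2K_V+G_1+G_2$ and $\pi_{V*}F_0=G_1+G_2+2G_3+2G_4$, which together force $K_V\equiv G_3+G_4$ and therefore $K_W\equiv 0$ after the blow-down. The elliptic fibration descends to $W\to\PP^1/\tau\cong\PP^1$ with singular fibres as follows: an $I_2$ over the image of $F_0$, consisting of two $(-2)$-curves $G_1',G_2'$ meeting transversally at the two images of $G_3,G_4$; an $I_0^*$ over the image of $F_{(-2,1,1)}$, made up of the strict transform and the four $(-2)$-curves from the $A_1$ resolutions; a single $I_4$ arising from the $\tau$-swapped pair $\sigma F_0,\sigma^2F_0$; and twelve $I_1$ from the twelve $\tau$-orbits among the $24$ $I_1$ fibres of $\Ytilde$. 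The Euler number $2+6+4+12=24$ with Noether's formula then gives $\chi(\cO_W)=2$, so $W$ is K3.

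Finally, the $\tau$-invariant $(-4)$-curve $E_0$ descends to $\bar E_0:=\pi_V(E_0)$ with $\bar E_0^2=-2$ (from $2\bar E_0^2=E_0^2=-4$); since $E_0$ does not meet $F_0^3,F_0^4$, $\bar E_0$ is disjoint from $G_3,G_4$ and survives on $W$ as the $(-2)$-curve $2$-section. The $\tau$-swapped pair $E_1,E_2$ has common image $\bar E_{12}$ with $2\bar E_{12}^2=(E_1+E_2)^2=-8$, giving the $(-4)$-curve $4$-section. The principal technical point is the pushforward identity $K_V\equiv G_3+G_4$, which forces $K_W\equiv 0$; once this is in place, the fibre-type accounting and the identification of the $(-2)$ and $(-4)$ multisections follow from Proposition~\ref{prop!I4-fibre} and repeated application of the projection formula.
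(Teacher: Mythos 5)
Your proposal follows the same fibre-by-fibre analysis as the paper: you identify the fixed locus ($F_0^1+F_0^2$ plus the four isolated points on $F_{(-2,1,1)}$), observe that $F_0^3,F_0^4$ descend to $(-1)$-curves whose contraction yields the $I_2$, that the $\tau$-swapped pair of $I_4$ fibres gives a single $I_4$, and that the invariant smooth fibre produces the $I_0^*$ after resolving the four $A_1$ points. Where you genuinely differ is the identification of the minimal model as a K3: the paper notes that no multiple fibres survive and that $p_g=1$, and concludes via the canonical bundle formula, whereas you prove $K_V\equiv G_3+G_4$ by pushing forward $K_{\Ytilde}\equiv F_0$, deduce $K_W\equiv 0$ after contracting $G_3,G_4$ (which also establishes minimality directly), and then get $\chi(\cO_W)=2$ from the Euler number $2+6+4+12=24$ and Noether's formula. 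This is a correct and somewhat more self-contained route; your count of twelve $I_1$ fibres downstairs is justified since the induced involution on the base has only the two named fixed points, so the $24$ nodal fibres fall into free orbits.

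One correction in your last paragraph: by Proposition \ref{prop!I4-fibre}, $E_1$ and $E_2$ meet $F_0^3$ and $F_0^4$, so $\bar E_{12}$ meets each of $G_3$ and $G_4$ once; after the contraction its image has self-intersection $-4+2=-2$ and passes through the two nodes of the $I_2$ fibre. Hence the ``$(-4)$-curve which is a $4$-section'' lives on the resolution of $\Ytilde/\tau$ before the final contraction, not on the minimal model $W$ itself --- indeed no smooth rational curve on a K3 can have self-intersection $-4$, so as written your claim is inconsistent with the $K_W\equiv 0$ you just proved. The paper's proof records precisely this point (``After blowing down $F_0^3$ and $F_0^4$, it passes through the two nodes of the $I_2$-fibre''), and you should add the analogous sentence; your treatment of $\bar E_0$ is fine since $E_0$ is disjoint from $F_0^3$ and $F_0^4$.
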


\begin{proof}
Let us consider the $I_4$ fibre $F_0=\sum_{i=1}^4 F_0^i$ that is $\tau$-invariant ($=\tau_0$). Under the quotient by $\tau$, the two curves $F_0^1$ and $F_0^2$ that are pointwise fixed go to $(-4)$-curves, while $F_0^3$ and $F_0^4$ of go to $(-1)$-curves. After contracting the $(-1)$-curves, we get an $I_2$ fibre. The other two $I_4$ fibres are identified to give a single $I_4$ fibre. The nonsingular invariant fibre maps to an $I_0^*$ after resolving the four $A_1$ singularities. The two $(-4)$-curves $E_1$ and $E_2$ which form a $\tau$-orbit are identified to give a $(-4)$-curve which is a $4$-section. After blowing down $F_0^3$ and $F_0^4$, it passes through the two nodes of the $I_2$-fibre. The image of the single $(-4)$-curve $E_0$ is a $(-2)$-curve which is a $2$-section. The multiplicity $2$ of the $(-1)$-components disappears after contracting and stays in the central curve of the $I_0^*$, so there are no multiple fibres. Notice that $p_g=1$, and so the minimal model of $\Ytilde/\tau_0$ is a K3 surface.
\end{proof}

Let $\pi_{V} \colon Y \to V$ be the quotient by $\tau$. Notice that $V$ has five $A_1$ singularities (four from the four fixed points, and one from two of the $\frac{1}{4}(1,1)$ singularities), and one $\frac{1}{4}(1,1)$ singularity. The minimal resolution $\Vtilde$ of $V$ is the minimal resolution of $\Ytilde/\tau$. Using the notation in the proof of Proposition \ref{V}, let $D_1$ and $D_2$ be the images of $F_0^3$ and $F_0^4$ in $V$. Notice that in $\Vtilde$, the proper transforms of $D_1$ and $D_2$ are $(-1)$-curves intersecting transversally at one point the $(-4)$-curve which is the exceptional divisor of the $\frac{1}{4}(1,1)$ singularity. See Figure \ref{f1} for this and what follows.

We have $V' \rightsquigarrow V$, where $V':=Y'/\tau$ is the blow-up at one point of a nodal K3 surface, as in Proposition \ref{V'}. Let us consider that degeneration over a disc $\D$. This is, we consider the $\Q$-Gorenstein deformation $(V \subset \cV) \to (0 \in \D)$ where $V'$ is a fibre for some $t \in \D \setminus \{0\}$. Let $V \to U$ be the contraction of the curve $D_1$. Then, there is a blow-down deformation $(U \subset \cU) \to (0\in \D)$ which commutes with $\cV \to \D$ over $\D$. This and what follows is explained in \cite{HTU}, see also \cite[\S2]{Urz}. In this way, we have an extremal neighborhood $(D_1 \subset \cV) \to (Q \in \cU)$ of type k1A. Notice that $(Q \in U)$ is a $\frac{1}{3}(1,1)$ singularity. This extremal neighborhood is of flipping type, and it is the simplest case among all k1A (see \cite[Prop.2.15]{Urz}). After we perform the flip, we obtain a Gorenstein deformation $(V_1 \subset \cV_1) \to (0 \in \D)$ where $V_1$ is the minimal resolution of $U$. Hence the flipping curve is the $(-3)$-exceptional curve. Since this was a flip, the fibre $V'$ in $\cV$ appears also in $\cV_1$.

\begin{figure}[htbp]
\centering
\includegraphics[width=16cm]{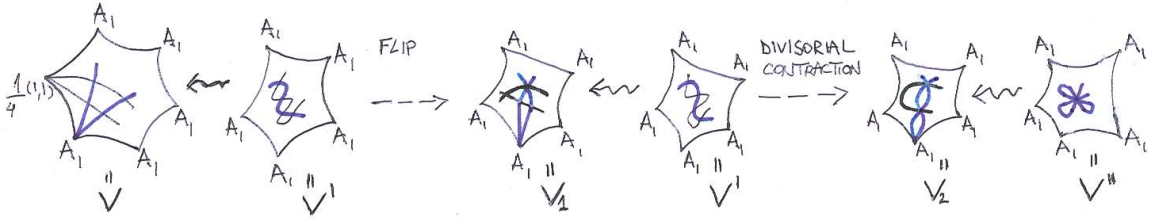}
\caption{Birational transformations on K3 family}
\label{f2}
\end{figure}

After that, the proper transform of $D_2$ together with the $(-1)$-curve in $V'$ generate a contractible divisor in $\cV_1$. After this divisorial contraction, we obtain a Gorenstein deformation $(V_2 \subset \cV_2) \to (0 \in \D)$ where $V_2$ is the blow-down of $D_2 \subset V_1$, and the general fibre is a nodal K3 surface $V'_1$. Notice that the branch curve of $Y' \to V'$ in $V'_1$ has a $4$-point, and degenerates nontrivially to a nonreduced curve formed by three irreducible components (one with multiplicity $2$), two passing through one $A_1$, and the three together having two 3-points in the smooth locus of $V_2$, as shown in Figure \ref{f2}. We have proved:

\begin{prop}
The $\Q$-Gorenstein deformation $V' \rightsquigarrow V$ over a disk $\D$ is birationally equivalent over $\D$ to a Gorenstein deformation $V'' \rightsquigarrow V_2$, where $V''$ is the blow-down of a $(-1)$-curve in $V'$, and $V_2$ is the K3 surface with five nodes obtained after the flip and divisorial contraction explained above.
\label{flip}
\end{prop}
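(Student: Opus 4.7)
The plan is to organise the birational modifications described in the paragraphs preceding the statement into a sequence of three operations over the disc $\D$, and to verify at each stage that the operation on central fibres extends to a modification of families with the correct (Q-)Gorenstein properties.

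First I would fix the $\Q$-Gorenstein smoothing $(V\subset\cV)\to(0\in\D)$ with general fibre $V'$ and central fibre $V$, whose existence comes with the degeneration $V'\rightsquigarrow V$ already in hand. On $V$ the curve $D_1$ (image of $F_0^3$) is a $(-1)$-curve meeting the $\frac14(1,1)$-exceptional $(-4)$-curve of $\Vtilde$ transversally at one point, so contracting $D_1$ replaces the $\frac14(1,1)$ point by a $\frac13(1,1)$ point, producing a surface $U$. Applying the blow-down deformation framework of \cite{HTU} (see also \cite[\S2]{Urz}) extends this contraction to a $\Q$-Gorenstein family $(U\subset\cU)\to(0\in\D)$ sharing the general fibre with $\cV$, and exhibits $(D_1\subset\cV)\to(Q\in\cU)$ as a three-dimensional extremal neighbourhood. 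A check of the numerics identifies it as type k1A in the simplest configuration, and by \cite[Prop.~2.15]{Urz} it is of flipping type.

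Next I would perform the flip to obtain a Gorenstein family $(V_1\subset\cV_1)\to(0\in\D)$ with the same general fibre $V'$ but with central fibre $V_1$ the minimal resolution of $U$; in particular $Q$ becomes a $(-3)$-curve on $V_1$. The proper transform of $D_2$ in $V_1$, together with the $(-1)$-curve on the general fibre $V'$ produced by Proposition \ref{V'}, fit together into a smooth divisor in $\cV_1$ whose contraction is Gorenstein. Performing this divisorial contraction over $\D$ yields $(V_2\subset\cV_2)\to(0\in\D)$, whose general fibre is the blow-down $V''$ of the $(-1)$-curve on $V'$ and whose central fibre $V_2$ acquires five ordinary double points. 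To conclude I would read off, from the intersection combinatorics recorded in Propositions \ref{V'} and \ref{V} and the branch-curve picture in Figure \ref{f2}, that $V_2$ is exactly the nodal K3 surface with five $A_1$ singularities claimed in the statement.

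The main obstacle is ensuring that each of the three central-fibre operations actually lifts to a modification of the total family over $\D$ with the correct (Q-)Gorenstein behaviour, rather than merely being a birational operation on central fibres. This is exactly the content of the k1A flip machinery of \cite{HTU} and \cite{Urz}; once the extremal neighbourhood is recognised to lie in their simplest flipping case, the remaining verifications reduce to routine bookkeeping of discrepancies and intersection numbers on $V$, $V_1$ and $V_2$, almost all of which has already been done in the propositions and paragraphs leading up to the statement.
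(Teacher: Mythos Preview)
Your proposal is correct and follows essentially the same approach as the paper: the paper's proof is precisely the discussion preceding the proposition (ending in ``We have proved:''), and you have reproduced its three steps --- blow-down of $D_1$ to produce the k1A extremal neighbourhood, the flip via \cite[Prop.~2.15]{Urz}, and the divisorial contraction of the divisor swept by $D_2$ and the $(-1)$-curve in $V'$ --- with the same references and the same identification of the central and general fibres at each stage.
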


\subsection{Stable rational $\Z/3$-Godeaux surfaces} \label{s23}

A known method for producing $\frac{1}{n^2}(1,n-1)$ singularities on an elliptic fibration is to merge a nodal fibre with a multiple fibre of multiplicity $n$; see e.g.~\cite{K92}, \cite[\S4]{Urz}. We apply this method with $n=3$ to the Dolgachev $(3,3)$ surfaces $\Xtilde$. To achieve that, we first merge a nodal fibre in $\Ytilde$, the universal cover of $\Xtilde$, with a $\sigma$-fixed fibre so that $\Ytilde$ acquires an $A_2$ singularity. Then the quotient by $\sigma$ gives what we want. We can do this independently with each of the two fixed fibres. In fact, this approach extends to $Y'$ in $\cM$ with a single $A_2$-singularity that is fixed by $\sigma$. For such $Y'$, one of the $\sigma$-invariant genus 4 fibres of the canonical pencil $\Ytilde'\to\PP^1$ acquires a node.


\begin{prop}\label{prop!B-families}
There are two divisors $\cB_{\omega}$ and $\cB_{\omega^2}$ each defined by hypersurfaces in $\cM$ corresponding to surfaces $Y$ with an $A_2$ singularity that is fixed by $\sigma$. The quotient $X$ of such a surface is a Godeaux surface with a $\frac19(1,2)$ singularity.
\end{prop}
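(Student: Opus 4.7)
The plan is to realise $\cB_\omega$ as the hypersurface in $\cM$ where the $\sigma$-invariant fibre $F_\omega$ of the canonical pencil $\Ytilde'\to\PP^1$ acquires a node, and to identify the resulting singularity on the total space as $A_2$ via Kawamata's confluence picture. Starting from Lemma \ref{lem!Z3-curve-section}(i) with $(\al_0,\al_1,\al_2)=(1,\omega,\omega^2)$, I would write $F_\omega\subset\PP^3$ explicitly as a $(2,3)$ complete intersection and compute its Jacobian discriminant $\Delta_\omega$ as a polynomial in the moduli parameters $a_{ij},b_i,c_i,d_i$ of Proposition \ref{prop!moduli}. Verifying by direct calculation, and sanity-checking on the $\Sthree$-symmetric slice $\cM^s$ where the explicit equations \eqref{eq!multiple-fibre} make the computation transparent, that $\Delta_\omega$ is a nonzero irreducible polynomial produces the hypersurface $\cB_\omega=(\Delta_\omega=0)$; the parallel procedure applied to $F_{\omega^2}$ yields $\cB_{\omega^2}$.

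Next I would argue that for a general member of $\cB_\omega$ the acquired singularity is of type $A_2$ and fixed by $\sigma$. Since $F_\omega$ is $\sigma$-invariant, its singular locus is $\sigma$-invariant, so a generic isolated node must sit at one of the three $\sigma$-fixed points of $F_\omega$, where the action reads $(t,y_0,y_1,y_2)\mapsto(\omega^2 t,y_1,y_2,y_0)$. At such a point I would introduce local analytic coordinates $(u,v,s)$ on $Y$ in which the canonical pencil is given by the base coordinate $s$ (and the fibre $F_\omega=\{s=0\}$ is cut out as $uv=0$), so that $Y$ is locally $uv=g(s)$. An eigenvalue analysis of the $\sigma$-action on $(\al_0,\al_1,\al_2)$ modulo $\sum\al_i=0$ shows that $s$ transforms as $s\mapsto\omega^{\pm 1}s$, and $\sigma$-invariance of $uv-g(s)$ then forces $g(s)=s^3\cdot(\text{unit})$, giving exactly the $A_2$ normal form. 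Equivalently, no single $I_1$ fibre can collide with $F_\omega$ in isolation: its full $\sigma$-orbit of three $I_1$ fibres must collide simultaneously, producing an $A_2$ on $Y$.

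The quotient statement is then immediate: the $\Z/3$-action on the $A_2$ just described is by construction the index-one T-covering of the Wahl singularity $\frac19(1,2)$, cf.~\cite[\S4]{Urz} and \cite{K92}, so $X=Y/\sigma$ has a single $\frac19(1,2)$ point. The main technical obstacle I anticipate is establishing the irreducibility and codimension-one character of $\Delta_\omega$: a priori the Jacobian locus of a $(2,3)$-complete intersection could drop dimension on a reducible or higher-codimension stratum, or could degenerate with a $\sigma$-orbit of three disjoint nodes rather than a single fixed node. I expect to settle both points by symbolic computation using the Reid key-variety equations, supported by the restriction to $\cM^s$ where \eqref{eq!multiple-fibre} makes the singular-point analysis very clean.
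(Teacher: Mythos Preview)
Your overall strategy coincides with the paper's: define $\cB_\omega$ as the locus where the $\sigma$-invariant canonical fibre $F_\omega$ becomes singular, show this is a hypersurface, identify the surface singularity as $A_2$ at a $\sigma$-fixed point, and read off the quotient singularity. There are however two substantive differences worth noting.

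First, rather than computing the full Jacobian discriminant of the $(2,3)$-complete intersection, the paper exploits the observation that a unique node on $F_\omega$ is necessarily $\sigma$-fixed, hence lies on the explicit line $Q=(1,\la,\omega^2\la,\omega\la)$ in $\PP^3$. Substituting $Q$ into the two equations of Lemma~\ref{lem!Z3-curve-section}(i) gives a pair of polynomial relations in $\la$ and the moduli parameters, and eliminating $\la$ produces the hypersurface directly. This sidesteps entirely the issue you flag of possible components carrying $\sigma$-orbits of three non-fixed nodes.

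Second, your symmetry argument for the $A_2$ type has a genuine gap. You assert that $\sigma$-invariance of the local equation $uv-g(s)$ forces $g(s)=s^3\cdot(\text{unit})$, but the defining equation of $Y$ is only $\sigma$-\emph{semi}-invariant, and the conclusion depends on the character by which $uv$ transforms. Freeness of $\sigma$ on the smooth locus forces the eigenvalues $\alpha,\beta$ on the branch directions to lie in $\{\omega,\omega^2\}$, but this still allows $\alpha\beta\in\{1,\omega,\omega^2\}$, which would yield $A_2$, $A_3$ or $A_1$ respectively. Your confluence heuristic (three $I_1$ fibres colliding $\sigma$-equivariantly) and the appeal to \cite{K92}, \cite[\S4]{Urz} do pin this down on the elliptic locus $\cY\cap\cB_\omega$, but the proposition concerns the general member of $\cB_\omega$, where there is no elliptic fibration on $\Ytilde$. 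The paper closes this by a direct (and admittedly tedious) local computation in the one-parameter family $\al_1=\omega+s$, $\al_2=\omega^2-s$, verifying $A_2$ from the explicit equations. You will need either that computation or a separate determination of the eigenvalues $\alpha,\beta$ to complete the argument.
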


\begin{proof}
We consider the $\sigma$-invariant fibre $F_\omega$ on $Y$, corresponding to $(1,\omega,\omega^2)$ in $H^0(K_Y)$. The computation for $F_{\omega^2}$ is similar. Suppose $F_\omega$ has a node $Q$. Since $Q$ is a distinguished point of $F_\omega$, it is fixed by $\sigma$. Thus by Lemma \ref{lem!Z3-curve-section} and the discussion following, $Q=(1,\la,\omega^2\la,\omega\la)$ in $\PP^3$ for some $\la$. We substitute $Q$ into the equations for $F_\omega$ and rearrange a little to get the following two conditions:
\begin{equation}\label{eqn!B-family}
\begin{split}
\la^2 = (b_0 + \omega^2 b_1)\la + \omega^2 a_{11} + a_{12} + \omega a_{22} \\
2\la^3 = -3((\omega^2 d_2 + \omega d_3)\la + \omega c_2 + \omega^2 c_3).
\end{split}
\end{equation}
Eliminating $\la$ gives a single hypersurface in $\cM$ defining $\cB_\omega$. The other divisor $\cB_{\omega^2}$ is obtained by interchanging $\omega$ and $\omega^2$.

It remains to check that the general surface in $\cB_\omega$ has a single $A_2$ singularity and no others. Consider a neighbourhood $U$ of $Q\in F_\omega$ in $Y$. We can write $U$ as the complete intersection in $\A^3\times\Delta_s$ given by the equations of Lemma \ref{lem!Z3-curve-section}(i) with $\al_1=\omega+s$ and $\al_2=\omega^2-s$, where $s$ is the coordinate on the disc $\Delta$ and we assume $t\ne0$. A careful (but tedious) analysis of the two equations confirms that $Q$ is an $A_2$ singularity for the general surface in $\cB_\omega$.
\end{proof}

\begin{rem} The hypersurface defining $\cB_\omega$ is too large to reproduce here, so we consider $\cB^s_\omega$, the restriction of $\cB_\omega$ to $\cM^s$. In fact, we have $\cB^s_\omega=\cB^s_{\omega^2}=(\cB_\omega\cap\cB_{\omega^2})^s$, because the involution maps the first $A_2$ singularity to a second $A_2$ singularity. The hypersurface in $\cM^s$ defining $(\cB_\omega\cap\cB_{\omega^2})^s$ is
\[\frac 23 (a_{11}-a_{12})^3 + 2(a_{11}-a_{12})^2d_2 + (a_{11}-a_{12})(3b_0c_2-b_0^2d_2+\frac32d_2^2) + \frac12 c_2(3c_2+3b_0d_2-2b_0^3).
\]
\end{rem}

\begin{lem}\label{lem!Z3-transversal} All intersections between $\cY$, $\cB_\omega$, and $\cB_{\omega^2}$ are transversal. Thus, for a general $Y$ in any of these intersections, each singularity of $Y$ can be $\Q$-Gorenstein smoothed independently, and the same applies to the Godeaux quotient $X$, see Figure \ref{f3} below.
\end{lem}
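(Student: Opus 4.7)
My plan is to establish transversality by showing that the three divisors have linearly independent conormal directions at a general intersection point, and then to invoke $\Q$-Gorenstein deformation theory for the independent-smoothing statement. Each singularity involved --- one $\frac14(1,1)$ (from $\cY$) and two $\frac19(1,2)$'s (from $\cB_\omega$, $\cB_{\omega^2}$) on the Godeaux quotient $X$ --- is a Wahl singularity with $\dim T^1_{qG}=1$. Thus transversality of the three divisors is equivalent to surjectivity of the local-to-global map $T^1_{qG}(X) \to \bigoplus_i T^1_{qG}(P_i)$, after which the independent-smoothing statement is standard.

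Explicitly, in the 9-dimensional parameter space of Proposition \ref{prop!moduli}, the divisor $\cY$ is cut out by $b_0=0$, while $\cB_\omega$ and $\cB_{\omega^2}$ are each a single hypersurface obtained by eliminating $\la$ from \eqref{eqn!B-family} and its $\omega\leftrightarrow\omega^2$ image respectively. So the claim reduces to verifying that the $3\times 9$ Jacobian of $(b_0, F_\omega, F_{\omega^2})$ in the Reid parameters $(a_{ij},b_i,c_j,d_k)$ has rank $3$ at a general intersection point. It is convenient to work upstairs on the cover $Y$: deformations are $\sigma$-equivariant, the $\sigma$-invariant summand of $T^1$ at a $\sigma$-fixed $A_2$ on $Y$ coincides with the $T^1_{qG}$ of the quotient $\frac19(1,2)$, and the $\sigma$-equivariant smoothing of the orbit of three $\frac14(1,1)$'s on $Y$ descends to the smoothing of the single $\frac14(1,1)$ on $X$.

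For the rank computation I would use two observations: first, $\cB_\omega$ and $\cB_{\omega^2}$ are genuinely distinct divisors related by the operation $\omega\leftrightarrow\omega^2$ (they coincide only after restriction to the $\Sthree$-symmetric locus $\cM^s$, where the two fixed fibres $F_\omega,F_{\omega^2}$ get swapped by the involution); second, each of the equations $F_\omega,F_{\omega^2}$ involves $b_1$ and the $c_i,d_i$ parameters nontrivially via \eqref{eqn!B-family}, so neither is proportional to $b_0$. The main obstacle is then the tedious but direct verification that the rank is exactly $3$. The cleanest way is a computer algebra check: construct one concrete $Y$ in the triple intersection by setting $b_0=0$ and solving \eqref{eqn!B-family} together with its $\omega\leftrightarrow\omega^2$ image for the remaining parameters, then confirm that the Jacobian has full rank there. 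Once verified at a single point, it holds on a dense open subset of the intersection, which is enough for the general member and hence for the Godeaux quotient.
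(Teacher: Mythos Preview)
Your approach is correct and reaches the same conclusion, but the paper's argument is more hands-on and avoids both the deformation-theoretic reformulation and the computer check. Rather than computing a Jacobian rank, the paper exhibits explicit one-parameter families in the Reid parameter space that move off one divisor while remaining on another. For instance, to see that $\cY$ and $\cB_\omega$ meet transversally, one sets $b_0=\varepsilon$ and simultaneously $b_1\mapsto b_1-\varepsilon\omega$: the combination $b_0+\omega^2 b_1$ appearing in \eqref{eqn!B-family} is unchanged, so the $A_2$ condition persists and the curve stays in $\cB_\omega$, while it visibly leaves $\cY$ since $b_0\ne 0$. Conversely, varying one of $a_{ij}$ or $b_1$ alone with $b_0=0$ fixed destroys the common root of the quadratic and cubic in \eqref{eqn!B-family}, moving off $\cB_\omega$ while staying in $\cY$. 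This construction gives the independent smoothing directions explicitly and needs no machine verification; your Jacobian-rank approach is equally valid and more systematic, but trades this explicitness for a computation you would still have to carry out.
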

\begin{proof}
Let $Y$ be a general surface in $\cY\cap\cB_\omega$. For simplicity, we work analytically on $\cB_\omega$, but the Lemma also holds in the algebraic setting. From equation \eqref{eqn!B-family} above, we can solve for $\la$ using the quadratic, and substitute into the cubic to get an analytic hypersurface defining $\cB_\omega$.

To smooth the $\frac14(1,1)$ singularity, let $Y_\varepsilon$ be the surface with parameters $b_0=\varepsilon$, $b_1=b_1-\varepsilon\omega$ and all others the same as those for $Y$. Thus $Y_\varepsilon$ remains in $\cB_\omega$, but is no longer in $\cY$. To smooth the $\frac19(1,2)$ singularity, simply vary one of $a_{ij}$ or $b_1$ while fixing $b_0=0$ and the other parameters, so that any root of the quadratic is no longer a root of the cubic in \eqref{eqn!B-family}.
\end{proof}

\begin{figure}[htbp]
\centering
\includegraphics[width=14cm]{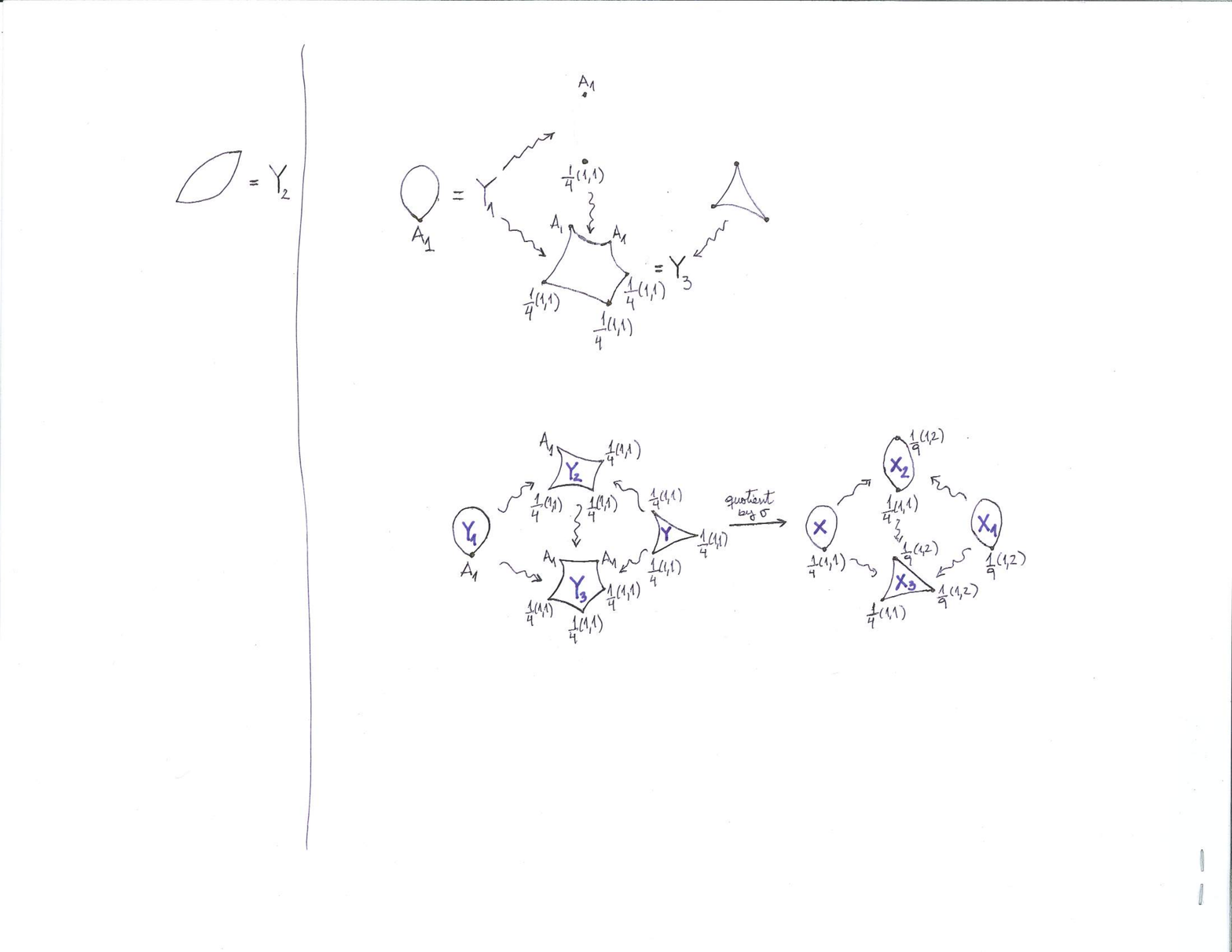}
\caption{Stable $\Z/3$-degenerations of $Y$ and $X$}
\label{f3}
\end{figure}

Let $X_i$ for $i=1,2,3$ be stable Godeaux surfaces such that $X_1$ has one $\frac{1}{9}(1,2)$ singularity, $X_2$ has one $\frac{1}{4}(1,1)$ and one $\frac{1}{9}(1,2)$, and $X_3$ has one $\frac{1}{4}(1,1)$ and two $\frac{1}{9}(1,2)$ singularities. Let $Y_i$ be the corresponding cyclic cover of $X_i$. All of them relate under $\Q$-Gorenstein degenerations as shown in Figure \ref{f3}.

\begin{prop}\label{prop!rational-X23}
The surfaces $X_2$ and $X_3$ are rational.
\end{prop}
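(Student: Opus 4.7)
The plan is to show $\kappa(\widetilde X_i) = -\infty$, which combined with $q(\widetilde X_i)=0$ and Castelnuovo's criterion gives rationality. First, the invariants $p_g(\widetilde X_i)=q(\widetilde X_i)=0$ follow from those of the KSBA-stable Godeaux surface $X_i$ (which has only rational singularities), and the discrepancy contributions of the Wahl singularities give $K^2_{\widetilde X_2}=-2$ and $K^2_{\widetilde X_3}=-4$ (the $\frac{1}{4}(1,1)$ contributes $-1$, and each $\frac{1}{9}(1,2)$ contributes $-2$, the latter computed from the chain $[(-5),(-2)]$ with discrepancies $-\tfrac{2}{3},-\tfrac{1}{3}$). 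So neither $\widetilde X_2$ nor $\widetilde X_3$ is minimal.

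Next I would exploit the elliptic fibration inherited from the Dolgachev $(3,3)$-surface $\widetilde X$ described in Section \ref{s2}. Following Kawamata \cite{K92} (see also \cite[\S4]{Urz}), the $\QQ$-Gorenstein degeneration that produces a $\frac{1}{9}(1,2)$ singularity corresponds to merging a multiplicity-$3$ fibre with a nearby nodal fibre; the effect on the minimal resolution is to \emph{absorb} the multiplicity into the Wahl chain, replacing the multiple fibre by a reducible non-multiple fibre that contains the chain $[(-5),(-2)]$ together with the original reduced elliptic curve. Consequently $\widetilde X_3$ carries an elliptic fibration with no multiple fibres, while $\widetilde X_2$ has one with exactly one multiplicity-$3$ fibre. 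Passing to a relatively minimal model of these fibrations and applying Kodaira's canonical bundle formula, $\widetilde X_3$ becomes a rational elliptic surface (no multiples, $\chi=1$, so $K\sim -F$), and $\widetilde X_2$ becomes a Halphen surface of index $3$ (so $K\sim_{\QQ}-\tfrac{1}{3}F$); both are rational, and hence so are the original $\widetilde X_i$.

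For $X_3$ the statement of the proposition also asserts the explicit model: the relatively minimal rational elliptic surface associated to $\widetilde X_3$ is the blow-up of $\PP^2$ at the nine base points of a suitable pencil of cubics. The main obstacle is to verify that Kawamata's merging construction applies concretely to the specific fibre configuration on the Dolgachev $(3,3)$-surface from Proposition \ref{prop!I4-fibre}, and that both multiplicity-$3$ fibres can be merged independently with distinct nearby $I_1$'s (consistently with Lemma \ref{lem!Z3-transversal}), so that the fibre types and Euler numbers balance correctly on $\widetilde X_i$. An alternative, more elementary route bypasses the canonical bundle formula entirely: one constructs the pencil of cubics explicitly, performs the blow-ups and contractions needed to realise the $(-4)$-curve and the two $[(-5),(-2)]$ chains inside a rational elliptic surface, and reads off rationality of $\widetilde X_3$ directly; rationality of $\widetilde X_2$ then follows by specialising within an appropriate one-parameter $\QQ$-Gorenstein family inside this model.
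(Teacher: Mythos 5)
Your proposal is correct and follows essentially the same route as the paper: resolve the Wahl singularities, observe that resolving the $\frac19(1,2)$ sitting at the node of a multiplicity-$3$ fibre absorbs the multiplicity into the exceptional chain (the paper justifies this with a short $K^2$ computation showing the proper transform of the nodal fibre is a $(-1)$-curve, rather than citing Kawamata's picture wholesale), so that one obtains an elliptic fibration with $p_g=q=0$ and one multiplicity-$3$ fibre for $X_2$ (Halphen pencil of index $3$) and with no multiple fibres for $X_3$ (pencil of cubics), whence both are rational. Your extra framing via $K^2_{\widetilde X_i}$, $\kappa=-\infty$ and Castelnuovo is consistent with, and implicit in, the paper's argument.
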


\begin{proof}
Let $i=2,3$. By construction, if $\widetilde Y_i \to Y_i$ is the minimal resolution of the three $\frac{1}{4}(1,1)$ singularities, then there is elliptic fibration $\widetilde Y_i \to \P^1$ such that the $A_2$ singularity(ies) is(are) in the node of an(two) $I_1$ fibre(s) $F_\omega$ and/or $F_{\omega^2}$ that are $\sigma$-invariant. The quotient $\widetilde X_i:= \widetilde Y_i/\sigma$ is the minimal resolution of $X_i$ at the $\frac{1}{4}(1,1)$ singularity. Therefore, there is an elliptic fibration $X_i \to \P^1$ with an(two) $I_1$ fibre(s) such that the $\frac{1}{9}(1,2)$ of the surface is(are) at the node(s), and that (those) fibre(s) is(are) multiple with multiplicity $3$. Take one $\frac{1}{9}(1,2)$ singularity and minimally resolve. Then the multiple fibre becomes a $I_3$ fibre in the induced elliptic fibration. Notice that $K_{\widetilde X_i}^2=0$, and so the self-intersection of the canonical class in the resolution is $-2$. Therefore the proper transform of the $I_1$ is a $(-1)$-curve.

In the case $i=2$, we obtain an elliptic fibration with $p_g=q=0$ and one multiple fibre of multiplicity $3$. This corresponds to a Halphen pencil of index $3$ in $\P^2$. In the case $i=3$, we obtain an elliptic fibration with $p_g=q=0$ and no multiple fibre. This corresponds to a pencil of cubics in $\P^2$. So in both cases they are rational surfaces.


\end{proof}

\begin{prop}
The surface $X_1$ is rational.
\end{prop}

\begin{proof}
Assume $X_1$ is not rational. Let $\widetilde X_1$ be its minimal resolution, and let $S$ be the minimal model of $\widetilde X_1$. We know that $S$ cannot be of general type since $K_{X_1}^2=1$ and $K_{X_1}$ is ample; see e.g.~\cite[Prop.~3.6]{Urz}. Therefore $K_S^2=0$. Also, we have that $K_{\widetilde X_1}^2=1-2=-1$, and so $S$ is the blow down of one $(-1)$-curve $E \subset \widetilde X_1$. Let $\Gamma_1$ be the $(-2)$-curve in the exceptional divisor of $\widetilde X_1 \to X_1$, and let $\Gamma_2$ be the $(-5)$-curve. Since $K_S$ is nef, we have that $E$ cannot touch $\Gamma_1$. Let $m=E \cdot \Gamma_2$. Since $K_{X_1}$ is ample, we have $m \geq 2$. Since $K_S$ is nef, we have $m \leq 3$. If $m=3$, then $S$ cannot have Kodaira dimension $1$ since $K_S \cdot \Gamma'_5=0$ and $\Gamma'_2$ is not a fibre of an elliptic fibration (image of $\Gamma_2$). Therefore $S$ must be an Enriques surface since $p_g(X_1)=q(X_1)=0$. Then $\pi_1(X_1)=\Z/2$. On the other hand, we have a $\Q$-Gorenstein smoothing $X' \rightsquigarrow X_1$ over a disk $\D$, where $X'$ is a nonsingular $\Z/3$-Godeaux surface, and so we have a surjective morphism $\pi_1(X')=\Z/3 \to \pi_1(X_1) = \Z/2$, which is a contradiction.

Thus $m=2$. Let us again consider the $\Q$-Gorenstein smoothing $X' \rightsquigarrow X_1$ over $\D$ above. We notice that a loop $\alpha$ around $\Gamma_2$ will satisfy $\alpha^2=1$ in $\pi_1(\widetilde X_1 \setminus \{\Gamma_1 \cup \Gamma_2\})$, because $\Gamma_2 \cdot E=2$, $\Gamma_1 \cdot E=0$, and $E=\P^1$. But $\alpha^9=1$ as well, and so $\alpha$ is trivial in $\pi_1(\widetilde X_1 \setminus \{\Gamma_1 \cup \Gamma_2\})$. By the same Seifert--Van-Kampen argument in Theorem \ref{topoteo}, this would imply $\pi_1(X_1)=\pi_1(X')=\Z/3$. Then $S$ is a Dolgachev surface with two multiple fibres of multiplicities $3a$ and $3b$ where $\gcd(a,b)=1$. Using the canonical class formula and $K_S \cdot \Gamma'_2=1$, we get $ \frac{1}{3a} + \frac{1}{3b} + \frac{1}{3at} =1$ where $3at$ is the degree of the multi-section $\Gamma'_2$ in the elliptic fibration of $S$. Then $a=b=t=1$, and $\Gamma'_2$ is a $3$-section.

We now consider the $\Q$-Gorenstein degeneration $X_1 \rightsquigarrow X_2$ over $\D$ above. Since around the singularity $1/9(1,2)$ this deformation is trivial, we resolve simultaneously to obtain a $\Q$-Gorenstein smoothing $X'_1 \rightsquigarrow X'_2$ over $\D$ (of the singularity $1/4(1,1)$). Since $K_{X'_1}^2=-1$, the canonical class of the $3$-fold is not nef. As in \cite[\S 2]{Urz2}, we either have a divisorial contraction or a flip for the family (the other ``ending options" are not possible). In case of a flip, the central fiber would become smooth, and then there would be a contradiction since the Kodaira dimension must be constant in a smooth deformation (and $X_2$ is rational by the previous proposition). So we have a divisorial contraction, and it must correspond to a $(-1)$-curve in $X'_2$ which does not touch the singularity $1/4(1,1)$. But then in the general fiber this $(-1)$-curve propagates as the curve $E$ above. So we contract this divisor in the family, to obtain another $\Q$-Gorenstein smoothing $X''_1 \rightsquigarrow X''_2$ over $\D$, where $X''_1$ is a Dolgachev $3,3$ surface. But now the canonical class of the $3$-fold must be nef (again flips are not allowed by the above argument) and so, by \cite[Theorem 5.1]{K92}, the canonical class of this new $3$-fold has index $3$ (or $1$), but this is locally a $\Q$-Gorenstein smoothing of $1/4(1,1)$, which has index $2$, a contradiction.

\end{proof}





\section{On $\ZZ/4$-Godeaux surfaces} \label{s3}
\subsection{Setup and involution}\label{sec!Z4-smooth}

Start with $\PP(1,1,1,2,2)$ with coordinates $x_1,x_2,x_3,y_1,y_3$ and a $\ZZ/4$-action
\[\sigma\colon (x_1,x_2,x_3,y_1,y_3)\mapsto(ix_1,-x_2,-ix_3,iy_1,-iy_3).\]
Now let $Y'$ be the intersection of two quartics $q_0$ and $q_2$ in eigenspaces $0$, $2$ of the form
\begin{align*}
q_0 &\colon x_1^4+x_2^4+x_3^4+x_1^2x_3^2+a_1x_1x_2^2x_3+a_2x_1x_2y_1+a_3x_2x_3y_3+a_4y_1y_3,\\
q_2 &\colon x_1^2x_2^2+x_2^2x_3^2+y_1^2+y_3^2+b_1x_1^3x_3+b_2x_1x_3^3+b_3x_1x_2y_3+b_4x_2x_3y_1.
\end{align*}
Reid \cite{R} (see also \cite{Miy}) showed that for general choices of parameters $a_i,b_i$, the surface $Y'$ is the nonsingular canonical model of general type with $p_g=3$, $K^2=4$ and a free $\ZZ/4$-action, whose quotient $X'$ is a Godeaux surface with $\pi_1=\ZZ/4$.  Their coarse moduli space is 8-dimensional, irreducible and unirational. As before, we use $\cM$ for the KSBA moduli space of stable $\ZZ/4$-Godeaux surfaces.

\begin{rem} For future use, we want to allow certain monomials to appear with coefficient zero, thus we do not use the above displayed parameters.
\end{rem}

Following Keum--Lee \cite[\S4.3]{KL}, there is a 4-dimensional family $\cM^s \subset \cM$ with an involution $\tau$ on $X'$ induced by the following action on $Y'$:
\begin{equation}\label{eqn!Z4-tau}
\tau\colon (x_1,x_2,x_3,y_1,y_3)\mapsto(-x_1,x_2,-x_3,y_1,y_3).
\end{equation}
The group generated by $\tau$ and $\sigma$ is $\ZZ/2\times\ZZ/4$. For surfaces in $\cM^s$, the allowed monomials for $q_0$ and $q_2$ are
\begin{align*}
q_0 &\colon x_1^4,\ x_2^4,\ x_3^4,\ x_1^2x_3^2,\ x_1x_2^2x_3,\ y_1y_3,\\
q_2 &\colon x_1^2x_2^2,\ x_2^2x_3^2,\ x_1^3x_3,\ x_1x_3^3,\ y_1^2,\ y_3^2.
\end{align*}

Let $Y'$ in $\cM^s$ be the nonsingular cover of a $\Z/4$-Godeaux $X'=Y'/\sigma$. The following Lemma is an easy version of Proposition \ref{prop!Z3-fix-R}.

\begin{lem}\label{lem!Z4-fix-tau}
The fixed curve $R_0$ of $\tau$ on $Y'$ is the curve of genus $5$ defined by $(x_2=0)$. The five $\sigma$-orbits $Q_1,\dots,Q_5$ on $Y'$ corresponding to the five isolated fixed points $P_1,\dots,P_5$ of $\tau$ on $X'=Y'/\sigma$ are given by $Q_1\colon(x_1=x_3=0)$ and $Q_2,\dots,Q_5\colon(y_1=y_3=0)$. The $\sigma$-orbit $Q_1$ is pointwise fixed by $\tau$, whereas $Q_2,\dots,Q_5$ are only $\tau$-invariant.
\end{lem}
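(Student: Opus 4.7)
The plan is to reduce the problem to a fixed-locus computation on $Y'$ inside the ambient $\PP(1,1,1,2,2)$. A $\tau$-fixed locus on $X' = Y'/\sigma$ arises either from a pointwise $\tau$-fixed locus on $Y'$ (contributing the curve $R_0$) or from a $\sigma$-orbit on $Y'$ that is $\tau$-invariant as a set (contributing an isolated fixed point of $\tau$ on $X'$). The latter is the same as a point of $Y'$ fixed by one of the compositions $\tau\sigma^k$ for $k=0,1,2,3$, so I would compute $\Fix(\tau\sigma^k)$ in the ambient weighted projective space using the weighted $\C^*$-action, and then intersect each locus with $Y'$.

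For $\tau$ itself, solving $(-x_1,x_2,-x_3,y_1,y_3) = \lambda\cdot(x_1,x_2,x_3,y_1,y_3)$ in weighted coordinates splits into two components: the scaling $\lambda=1$ gives $\Fix_1=(x_1=x_3=0)$, and $\lambda=-1$ gives $\Fix_2=(x_2=0)$. The intersection $R_0 = Y'\cap\Fix_2$ is a weighted hyperplane section, hence an element of $|K_{Y'}|$ since $K_{Y'}=\cO_{Y'}(1)$ for a $(4,4)$-complete intersection in $\PP(1,1,1,2,2)$; adjunction then gives $2g-2 = 2K_{Y'}^2 = 8$, so $g(R_0)=5$. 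Smoothness of $R_0$ for general $Y'\in\cM^s$ is a finite check on coefficients. Restricting the $\cM^s$-forms of $q_0, q_2$ to $\Fix_1$ produces the system $x_2^4+\alpha\,y_1y_3=0$ and $\beta\,y_1^2+\gamma\,y_3^2=0$ in $\PP(1,2,2)$, which cuts out four points for general parameters and forms the single $\sigma$-orbit $Q_1$, pointwise fixed by $\tau$ since $\tau$ acts trivially on $\Fix_1$.

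For the remaining isolated fixed points of $\tau$ on $X'$, I would examine $\tau\sigma^2$, which acts as $(x_1,x_2,x_3,y_1,y_3)\mapsto(x_1,x_2,x_3,-y_1,-y_3)$ and whose ambient fixed locus is $(y_1=y_3=0)$. Intersecting with $Y'$ yields a zero-dimensional scheme in $\PP^2$ cut out by two quartics, of length $16$ by B\'ezout. The three coordinate $\sigma$-fixed points of $\PP^2$ miss $Y'$ because the $x_i^4$-coefficients of $q_0$ are generically nonzero, so $\sigma$ acts freely on these sixteen points and partitions them into four $\sigma$-orbits $Q_2,\ldots,Q_5$. On each such point $\tau=\sigma^2$, so the orbit is $\tau$-invariant while $\tau$ swaps its elements in two pairs without fixing any. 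A direct check that the ambient fixed loci of $\tau\sigma$ and $\tau\sigma^3$ consist only of coordinate vertices, none lying on $Y'$ for generic coefficients, ensures no further orbits arise; combined with $Q_1$ this accounts for the expected five isolated fixed points.

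The principal obstacle is verifying smoothness of $R_0$ and the simplicity of the zero-dimensional intersections for a general $Y'\in\cM^s$. These reduce to explicit finite conditions on the coefficients $a_i, b_i$, most efficiently confirmed by computer algebra along the lines of Reid's original construction.
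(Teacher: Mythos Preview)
Your argument is correct. The paper itself gives no proof of this lemma beyond the remark that it is ``an easy version of Proposition~\ref{prop!Z3-fix-R}'', so your computation is in fact more detailed than what appears there. The decomposition $\Fix(\tau)=\Fix_1\cup\Fix_2$ via the weighted $\C^*$-action, the adjunction computation of $g(R_0)$, and the identification of the $Q_i$ through $\Fix(\tau\sigma^k)$ are all sound and follow the same pattern as the paper's treatment of the $\ZZ/3$ case.

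One small omission: the ambient fixed locus of $\tau\sigma^2$ has a second component along $(x_1=x_2=x_3=0)$, obtained from $\lambda^2=-1$. This does not affect the result, since restricting $q_0$ and $q_2$ to that line forces $y_1=y_3=0$ for general coefficients, so $Y'$ misses it; but you should mention it for completeness, just as you handled the coordinate vertices for $\tau\sigma^{\pm1}$.
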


\begin{rem} The Classification Theorem of \cite{CCML} implies that the quotient $X'/\tau$ must be an Enriques surface, and by \cite[Thm~3.2]{MLP}, this is the unique involution on any Godeaux surface such that the quotient is an Enriques surface. We work out this quotient in detail below.
\end{rem}

First note that $H^0(3K_{Y'})^{\text{inv}}=\left<x_1^2x_2,x_2x_3^2\right>$, and so $|3K_{Y'}^{\text{inv}}-R_0|$ is the pencil in $|2K_{Y'}|$ spanned by $x_1^2$, $x_3^2$; cf.~\cite[\S4]{KL}. Next consider the pencil $\Lambda$ in $|K_{Y'}|$ spanned by $x_1,x_3$, of genus $5$ curves on $Y'$ with four base points forming the $\sigma$-orbit $Q_1$. By construction of $Y'$, we see that $\sigma$ acts as an involution on $\Lambda$. Thus each curve in $|3K_{Y'}^{\text{inv}}-R_0|$ is a reducible curve $C+\sigma(C)$ for $C$ in $\Lambda$, with a node at each of the four basepoints.

\begin{lem}
The image $\Lambda_{X'}$ of $\Lambda$ on $X'$ is a pencil of curves of geometric genus $3$ with one node at the only basepoint $P_1$. Each curve in $\Lambda_{X'}$ is the image of a $\sigma$-orbit of two curves in $\Lambda$. The other four isolated fixed points $P_2,\dots,P_5$ of $\tau$ appear as second nodes on four distinct curves in $\Lambda_{X'}$. Finally, there are two $\tau$-invariant curves in $\Lambda_{X'}$.
\end{lem}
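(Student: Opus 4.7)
The plan is to exploit the explicit $\sigma$-action on the pencil $\Lambda$ and the étaleness of $\pi\colon Y'\to X'$. First note that $\sigma(\lambda x_1+\mu x_3)=i(\lambda x_1-\mu x_3)$, so $\sigma$ acts on the parameter $\PP^1$ of $\Lambda$ as the involution $[\lambda{:}\mu]\mapsto[\lambda{:}-\mu]$, with only the two members $\{x_1=0\}$ and $\{x_3=0\}$ fixed. Hence $\Lambda_{X'}=\Lambda/\sigma\cong\PP^1$ is a pencil, and a generic member $\pi(C)=\pi(\sigma C)$ is the common image of the $\sigma$-orbit $\{C,\sigma C\}$, giving the second assertion.

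For a generic $P\in C$ the four-element $\sigma$-orbit meets $C$ in exactly $\{P,\sigma^2P\}$ (since $\sigma^2C=C$ but $\sigma C\ne C$), so $\pi|_C\colon C\to\pi(C)$ has degree $2$ with covering involution $\sigma^2$. A direct inspection of $q_0,q_2$ shows that for generic $Y'\in\cM^s$ the involution $\sigma^2$ is fixed-point-free on $Y'$: the candidate fixed loci $\{x_2=y_1=y_3=0\}$ and $\{x_1=x_2=x_3=0\}$ in $\PP(1,1,1,2,2)$ impose non-generic algebraic conditions on the parameters $a_i,b_i$. Hence $C/\sigma^2$ is smooth and Riemann--Hurwitz yields $g(C/\sigma^2)=3$. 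From $\pi^*\pi(C)=C+\sigma C\in|2K_{Y'}|$ I deduce $\pi(C)\in|2K_{X'}+\chi|$ for a $2$-torsion element $\chi\in\Pic X'$, so adjunction with $K_{X'}^2=1$ gives $p_a(\pi(C))=4$; thus a general member has exactly one node. The four basepoints of $\Lambda$ lie on every $C$ and project to $P_1$; since $\sigma^2$ pairs them into two orbits of size two, the normalisation $C/\sigma^2$ has two points over $P_1$, producing the claimed node at the basepoint.

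For the four special curves carrying a second node at $P_i$ ($i=2,\ldots,5$), I use Lemma~\ref{lem!Z4-fix-tau}: each $Q_i$ is a $\sigma$-orbit of four points in $\{y_1=y_3=0\}\cap Y'$. Writing $P_0=(\alpha{:}\beta{:}\gamma{:}0{:}0)\in Q_i$ with $\alpha,\gamma\ne 0$, the unique member of $\Lambda$ through $P_0$ is $C_i=\{-\gamma x_1+\alpha x_3=0\}$, which also contains $\sigma^2P_0=(\alpha{:}-\beta{:}\gamma{:}0{:}0)$, while the remaining two points $\sigma^{\pm1}P_0$ lie on $\sigma C_i$. The four orbits therefore single out four distinct members $D_i:=\pi(C_i)\in\Lambda_{X'}$. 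The crucial local identity is $\tau P_0=\sigma^2P_0$, immediate from the definitions, which produces an extra $\tau$-fixed point $\bar P_0$ on $C_i/\sigma^2$ lying over $P_i$; a local-coordinate analysis combining this with the contribution from $\sigma C_i$ is used to exhibit a second branch of $D_i$ at $P_i$. Finally, the two $\sigma$-invariant members $\{x_1=0\}$ and $\{x_3=0\}$ of $\Lambda$ descend to the two $\tau$-invariant curves in $\Lambda_{X'}$, on which the combined group $\langle\sigma,\tau\rangle$ acts with enhanced stabilisers.

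The main obstacle is the second-node claim at $P_i$: the straightforward degree-two bookkeeping with normalisation $C_i/\sigma^2$ naively predicts smoothness at $P_i$ (it has a single $\sigma^2$-orbit over $P_i$), and recovering the extra singularity requires carefully tracking how the coincidence $\tau P_0=\sigma^2P_0$ produces an additional local branch of $D_i$ at $P_i$, either via a refined local model in the explicit coordinates or by using the interplay of the $\sigma$- and $\tau$-actions on the preimage $C_i\cup\sigma C_i$ in a nontrivial way.
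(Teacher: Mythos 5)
You have a genuine gap, and it sits exactly at the claim you yourself flag as the main obstacle: the second node at $P_i$. The missing fact is that the member $C_i\in\Lambda$ through a point $P_0\in Q_i$ is \emph{itself singular} at $P_0$ (and at $\sigma^2P_0$). Indeed, for $Y'$ in $\cM^s$ every monomial of $q_0,q_2$ involving $y_1$ or $y_3$ is quadratic in $(y_1,y_3)$ (namely $y_1y_3$, $y_1^2$, $y_3^2$), so at any point of $(y_1=y_3=0)\cap Y'$ all four partials $\partial q_k/\partial y_1,\ \partial q_k/\partial y_3$ vanish; smoothness of $Y'$ then forces $T_{P_0}Y'$ to be the $(y_1,y_3)$-coordinate plane, on which the differential of any form $\lambda x_1+\mu x_3$ vanishes. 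Hence every member of $\Lambda$ through $P_0$ has a double point there, an ordinary node for general parameters (this is the ``easier version of the computation in Proposition~\ref{prop!I4-fibre}'' that the paper invokes: $C_i+\sigma(C_i)$ is nodal at all four points of $Q_i$). Since $\pi$ is \'etale and $\sigma C_i$ does not pass through $P_0$, the germ of $D_i=\pi(C_i+\sigma C_i)$ at $P_i$ is isomorphic to the germ of $C_i$ at $P_0$, i.e.\ a node; the germs at the other points of $Q_i$ add no branches because $\pi\circ\sigma=\pi$ identifies their images with this one. Consistency: $C_i$ has two nodes, its normalisation has genus $3$, the free $\sigma^2$-action gives genus $2$ downstairs, matching $p_a(D_i)=4$ minus the two nodes at $P_1$ and $P_i$.

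So your ``naive bookkeeping predicts smoothness'' stems from tacitly assuming $C_i$ is smooth at $P_0$, which is false, and the mechanism you propose cannot be repaired: the coincidence $\tau P_0=\sigma^2P_0$ only identifies $P_i$ as a $\tau$-fixed point (that is Lemma~\ref{lem!Z4-fix-tau}) and produces no extra branch, nor does $\sigma C_i$ contribute one, for the reason just given. The rest of your argument is fine and in part a pleasant variant of the paper's: your Riemann--Hurwitz plus adjunction count ($p_a(\pi(C))=4$, normalisation $C/\sigma^2$ of genus $3$, two points over $P_1$, hence exactly one node there) replaces the paper's direct observation that $C$ and $\sigma C$ meet transversally at the four basepoints followed by \'etale descent. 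One small slip: $\pi(C)-2K_{X'}$ lies in $\ker\pi^*\cong\ZZ/4$, so it is a priori $4$-torsion rather than $2$-torsion, though this does not affect the intersection numbers you use.
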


\begin{proof} The description of the general curve in $\Lambda_{X'}$ follows from the discussion preceding the Lemma.
Using an easier version of the same computation as in the proof of Proposition \ref{prop!I4-fibre}, we see that for $i=1,\dots,4$, the $\sigma$-orbit $C_i+\sigma(C_i)$ in $\Lambda$ passing through $Q_i$ has a node at each of the four points of $Q_i$. Thus the image of this orbit under quotient by $\sigma$ is a curve with two nodes, one at $P_1$ and the other at $P_i$. The two $\tau$-invariant curves in $\Lambda$ are the images of $F_1\colon(x_1=0)$ and $F_3\colon(x_3=0)$.
\end{proof}

Let $R$ denote the image of $R_0$ in $X'$. Then $R$ is a smooth genus $2$ curve with $R^2=1$, and the above Lemma shows that $\Lambda_{X'}=|3K_{X'}-R|$.

\begin{cor}
The pencil $\Lambda_{X'}$ descends to an elliptic half pencil on the Enriques surface given by the minimal resolution of the five $A_1$ points in $Z'=X'/\tau$. The image of $R$ is a 2-section of the pencil on $Z'$.
\end{cor}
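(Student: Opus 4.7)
The plan is to lift the pencil $\Lambda_{X'}$ to a smooth birational model where $\tau$ acts as a genuine reflection, compute the genus and self-intersection of the general fibre by Riemann--Hurwitz and the projection formula, and then identify the half-fibres as the descendants of the two $\sigma$-fixed members of $\Lambda$ on $Y'$. First I would observe that every divisor in $\Lambda_{X'}$ is $\tau$-invariant: its lift to $Y'$ is the pencil generated by the $\tau$-eigenvectors $x_1^2$ and $x_3^2$, or equivalently $\Lambda_{X'}=|3K_{X'}-R|$ with both $K_{X'}$ and $R$ already $\tau$-invariant. Hence the pencil descends to $Z'$, and by taking strict transforms we obtain a pencil $\widetilde{\Lambda}$ on the minimal resolution $\widetilde{Z'}$, which is an Enriques surface by the classification of involutions in \cite{CCML} (noted in the Remark above).

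Next I would resolve the five $A_1$-points of $Z'$ simultaneously with the blowup $\beta\colon\widehat{X'}\to X'$ at the five fixed points $P_1,\dots,P_5$ of $\tau$, so that $\tau$ lifts to a reflection on $\widehat{X'}$ fixing $\widehat{R}+\sum_i\widehat{E}_i$ and the quotient $\widehat{\pi}\colon\widehat{X'}\to\widetilde{Z'}$ becomes a smooth double cover. For a general $\overline{C}\in\Lambda_{X'}$, which by the preceding Lemma is smooth of geometric genus $3$ with one node at $P_1$, the strict transform $\widehat{C}=\beta^*\overline{C}-2\widehat{E}_1$ is smooth of genus $3$ and $\widehat{C}^{\,2}=\overline{C}^{\,2}-4=(2K_{X'})^2-4=0$. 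The fixed points of $\tau$ on $\widehat{C}$ are the two points of $\widehat{C}\cap\widehat{E}_1$ (the tangent directions of the node) together with the two points of $\widehat{C}\cap\widehat{R}$ (using $\overline{C}\cdot R=2K_{X'}^{\,2}=2$). Riemann--Hurwitz then yields $g(\widehat{C}/\tau)=1$, so a general member of $\widetilde{\Lambda}$ is an elliptic curve; combined with self-intersection zero this exhibits $\widetilde{\Lambda}$ as an elliptic fibration on the Enriques surface.

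To identify the half-fibres I would examine the two $\sigma$-fixed points $F_1\colon(x_1=0)$ and $F_3\colon(x_3=0)$ of $\Lambda$ on $Y'$: in the pencil $\langle x_1^2,x_3^2\rangle$ they are realised as double members $2F_1$ and $2F_3$, so the bicanonical pencil $\Lambda_{X'}$ on $X'$ contains $2\overline{F}_1$ and $2\overline{F}_3$ with $\overline{F}_1,\overline{F}_3\in|K_{X'}|$. Descending via $\widehat{\pi}$ these become the two multiple fibres $2\widetilde{F}_1$ and $2\widetilde{F}_3$ of $\widetilde{\Lambda}$, and their reduced parts $\widetilde{F}_1,\widetilde{F}_3$ are the two half-fibres expected on an Enriques elliptic pencil. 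For the $2$-section statement, since $\widetilde{R}$ is contained in the branch locus of $\widehat{\pi}$ we have $\widehat{\pi}^*\widetilde{R}=2\widehat{R}$, and together with $\widehat{\pi}^*\widetilde{F}=\widehat{C}$ the projection formula yields $2(\widetilde{R}\cdot\widetilde{F})=2\widehat{R}\cdot\widehat{C}=2(R\cdot\overline{C})_{X'}=4$, so $\widetilde{R}\cdot\widetilde{F}=2$.

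The main obstacle I foresee is the identification of the two half-fibres: one must confirm that $\overline{F}_1, \overline{F}_3$ really enter $\Lambda_{X'}$ with multiplicity two (rather than as reduced members, which would spoil the half-fibre conclusion) and that the four $\tau$-fixed points on a general $\widehat{C}$ are all distinct so that the Riemann--Hurwitz count is unambiguous. Both points should follow by examining explicit equations in the pencil $\langle x_1^2,x_3^2\rangle$ on $Y'$ and checking genericity of the parameters.
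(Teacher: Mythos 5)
Your proof is correct and follows essentially the route the paper intends (the corollary is stated there as an immediate consequence of the preceding Lemma): quotient the $\tau$-invariant members through the blow-up at the five fixed points and the smooth double cover $\widehat{X'}\to\widetilde{Z'}$, use Riemann--Hurwitz and the projection formula to get genus-one images of square zero with $\widetilde{R}$ a $2$-section, and identify the images of $F_1,F_3$ as the two half-fibres, the remaining multiplicity check being exactly the easy verification you flag ($x_1^2$ cuts $2F_1$ on $Y'$, so the corresponding member of $\Lambda_{X'}$ is $2\overline{F}_1$ and the fibre over that point is forced to be $2\widetilde{F}_1$ by the numerical class of the fibre). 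Two cosmetic points: since $p_g(X')=0$ one has $\overline{F}_1,\overline{F}_3\in|K_{X'}+t|$ for a nontrivial torsion class $t$, not $|K_{X'}|$; and your (correct) observation that \emph{every} member of $\Lambda_{X'}$ is $\tau$-invariant is how the Lemma's phrase about ``two $\tau$-invariant curves'' must be read, namely as singling out the two members coming from the $\sigma$-invariant curves $F_1,F_3$.
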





\begin{cor}\label{cor!Z4-smooth-K3-quotient}
For general $Y'$ in $\cM^s$, the quotient $V':=Y'/\tau$ is a nodal K3 surface $(2,2,2)\subset\PP^5$ with four $A_1$ singularities. The minimal resolution of $V'$ is a K3 surface with the elliptic fibration image of the pencil $\Lambda=\left<x_1,x_3\right>\subset|K_{Y'}|$, and so the four $(-2)$-curves coming from the $A_1$ are sections.
\end{cor}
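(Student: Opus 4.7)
The plan is to combine Lemma~\ref{lem!Z4-fix-tau} with a direct analysis of the double cover $\pi_{V'}\colon Y'\to V'$ and of the $\tau$-invariant subring of the coordinate ring of $\PP(1,1,1,2,2)$. First I would observe that $R_0=(x_2=0)\cap Y'$ is a hyperplane section, so $R_0\sim K_{Y'}$ (since $K_{Y'}=\cO_{Y'}(1)$ by the weighted adjunction formula), and that $R_0$ is disjoint from $Q_1$ for generic parameters (e.g.\ whenever $a_4\neq0$). At each of the four isolated fixed points in $Q_1$, $\tau$ must act on the tangent space as $-I$, so the quotient $V'$ acquires four $A_1$ singularities at the images.

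Next I blow up these four points, $\phi\colon\widehat Y'\to Y'$, obtaining exceptional $(-1)$-curves $E_1,\dots,E_4$ on which $\tau$ acts trivially (the projectivised $-I$ action). Hence the fixed locus of $\tau$ on $\widehat Y'$ is the disjoint union $R_0\sqcup E_1\sqcup\cdots\sqcup E_4$, the quotient $\widehat Y'/\tau$ is smooth, and it must be the minimal resolution $\Vtilde'$. Combining the double cover formula $K_{\widehat Y'}=\pi^*K_{\Vtilde'}+R_0+\sum E_i$ with $K_{\widehat Y'}=\phi^*K_{Y'}+\sum E_i$ and $R_0\sim K_{Y'}$ yields $\pi^*K_{\Vtilde'}=0$. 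Since the $\tau$-invariant part of $H^0(K_{Y'})=\langle x_1,x_2,x_3\rangle$ is the line $\langle x_2\rangle$, we get $p_g(\Vtilde')=1$, and together with $q=0$ this forces $\Vtilde'$ to be a K3.

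For the $\PP^5$ description, the degree-$2$ $\tau$-invariants are spanned by $u=x_1^2,\ v=x_1x_3,\ w=x_3^2,\ s=x_2^2,\ p=y_1,\ q=y_3$, satisfying the Veronese relation $uw-v^2=0$. Since $q_0,q_2$ are already $\tau$-invariant for $Y'\in\cM^s$ by the given monomial list, each rewrites as a quadric in $(u,v,w,s,p,q)$ by direct substitution, and for general parameters the three quadrics are independent (distinguishable for instance by the monomials $v^2$, $u^2$ from $x_1^4$, and $p^2$ from $y_1^2$). This realises $V'\subset\PP^5$ as a complete intersection of three quadrics.

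Finally, for the elliptic fibration, each curve in $\Lambda=\langle x_1,x_3\rangle$ is $\tau$-invariant as a divisor (since $\tau$ simply negates $x_1$ and $x_3$), and a general $C\in\Lambda$ is smooth of genus $5$ passing transversally through the four basepoints $Q_1$. The involution $\tau|_C$ fixes the $R_0\cdot C=K_{Y'}^2=4$ points of $R_0\cap C$ together with the four pointwise-fixed points $Q_1\subset C$, so Riemann--Hurwitz gives $g(C/\tau)=1$, and the induced pencil on $\Vtilde'$ is elliptic. Writing $F_i$ for the $(-2)$-curve image of $E_i$ and $G$ for a general fibre, one has $\pi^*F_i=2E_i$ and $\pi^*G=\widehat C$ where $\widehat C$ is the proper transform of $C$; then $2(F_i\cdot G)=\pi^*F_i\cdot\pi^*G=2E_i\cdot\widehat C=2$, so $F_i\cdot G=1$ and $F_i$ is a section. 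The main obstacle, such as it is, is the bookkeeping of the blowup--quotient diagram and the verification that the $E_i$ lie entirely in the fixed locus of $\tau$; everything else follows from standard double cover and adjunction formulas.
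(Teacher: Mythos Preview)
Your argument is correct, but it takes a different route from the paper's. The paper computes the full $\tau$-invariant ring first, obtaining the complete intersection $(4,4,4)\subset\PP(1,2,2,2,2,2)$ with coordinates $x_2,x_1^2,x_1x_3,x_3^2,y_1,y_3$; since $x_2$ only appears squared in the equations for $\cM^s$, this is not well-formed, and replacing $x_2$ by $v_0=x_2^2$ and halving the weights yields $(2,2,2)\subset\PP^5$, from which the K3 property follows immediately by adjunction on $\PP^5$. The singularities and the fibration are then read off from Lemma~\ref{lem!Z4-fix-tau} with little further comment.

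You instead work from the bottom up: the K3 property is obtained via the double-cover formula on the blowup $\widehat Y'$ together with the eigenspace computation $H^0(K_{Y'})^\tau=\langle x_2\rangle$, and the $(2,2,2)$ model comes separately from the degree-$2$ invariants (implicitly using $\Proj R^\tau=\Proj(R^\tau)^{(2)}$, which is what the paper's well-formedness step amounts to). Your Riemann--Hurwitz count on a general fibre and the pullback computation $2(F_i\cdot G)=2E_i\cdot\widehat C=2$ make the elliptic fibration and section claims fully explicit, where the paper leaves these implicit. The paper's approach is quicker once one recognises $(2,2,2)\subset\PP^5$ as K3; your approach is more self-contained and mirrors the strategy used in Proposition~\ref{V'} for the $\ZZ/3$ case. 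One small point worth tightening: when identifying $H^0(K_{\Vtilde'})$ with $H^0(K_{Y'})^\tau$, you are using that $\det\tau=1$ on the ambient cone, so the naive action on $\langle x_1,x_2,x_3\rangle$ agrees with the action on $2$-forms --- this is true here but deserves a word.
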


\begin{proof}
The invariant monomials of the $\tau$-action on $Y'$ are $x_2$ , $v_1=x_1^2$, $v_2=x_1x_3$, $v_3=x_3^2$, $y_1$, $y_3$. There is one monomial relation between these generators: $v_1v_3=v_2^2$ and the two quartics $q_0$ and $q_2$ can also be expressed in terms of the invariants. This gives a complete intersection $(4,4,4)\subset\PP(1,2,2,2,2,2)$ which is however, not \emph{well formed}. The variable $x_2$ appears only as a square, so we can divide all weights by two and set $v_0=x_2^2$ to get the K3 surface $(2,2,2)\subset\PP^5$.

By Lemma \ref{lem!Z4-fix-tau}, the $\sigma$-orbit $Q_1$ descends to four $A_1$ singularities on $Y/\tau$, while the other orbits $Q_2,\dots,Q_5$ descend to pairs of nonsingular points on $Y/\tau$. After blowing up the four $A_1$ singularities, the image of $\Lambda$ is an elliptic fibration with ramification curve comprising the 4-section $R_0$ together with the four $(-2)$-curves.
\end{proof}

As we did for $\Z/3$-Godeaux, in the right column of Figure \ref{f3} we show the situation described in this subsection. In the next subsections we will describe and use the degenerate situation shown at the left side of Figure \ref{f3}.

\begin{figure}[htbp]
\centering
\includegraphics[width=17cm]{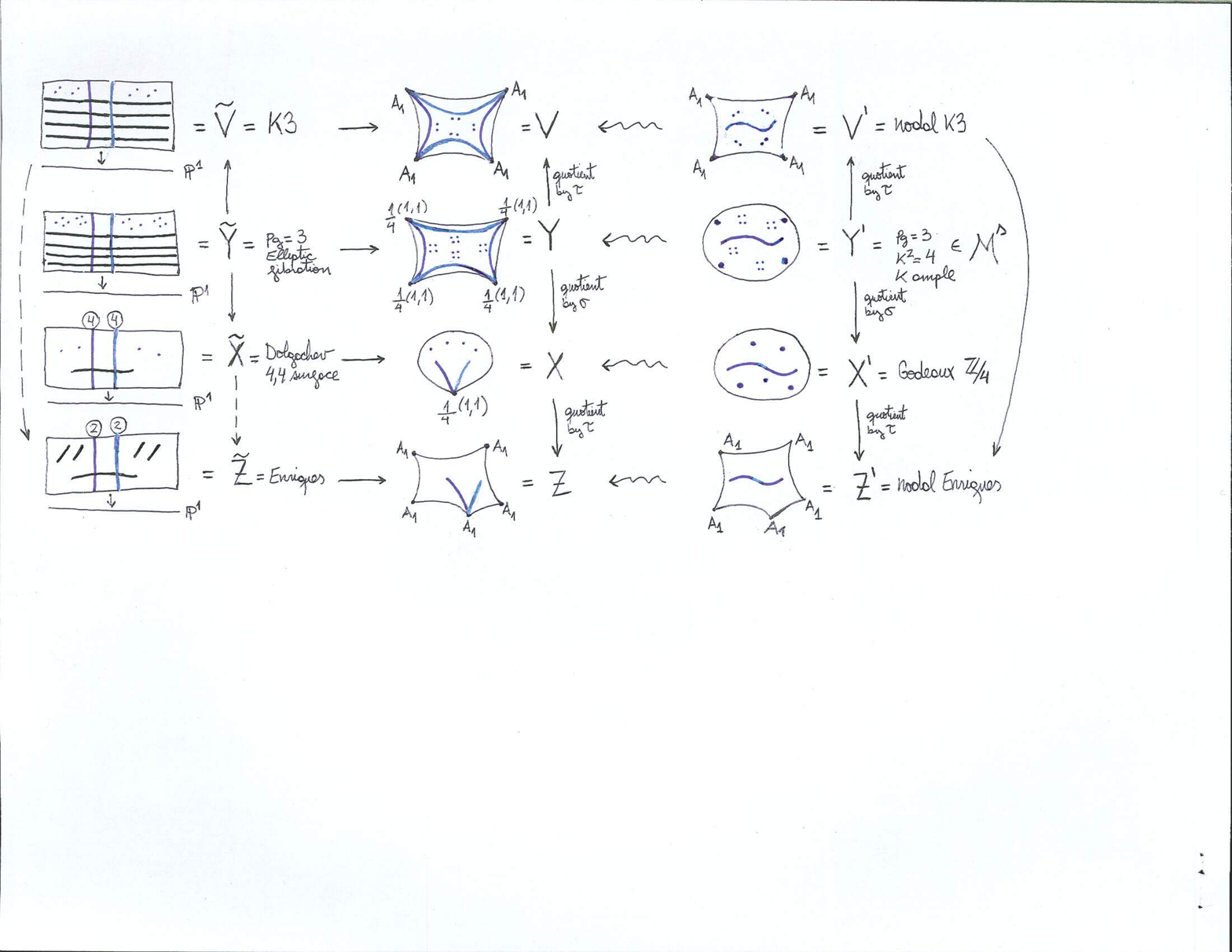}
\caption{Big picture for Section \ref{s3}}
\label{f4}
\end{figure}

\subsection{A stable $\ZZ/4$-Godeaux with $\frac{1}{4}(1,1)$ singularity}

We exhibit a divisor $\cY\subset\cM$ in the moduli space corresponding to $\ZZ/4$-Godeaux surfaces with a single $\frac14(1,1)$ singularity, cf.~\cite{K}. Take $Y_{8,8}\subset\PP(1,1,4,4,4)$ with $\ZZ/4$-action
\begin{equation*}
\sigma\colon (u_1,u_3,y_1,y_2,y_3)\mapsto(\eps u_1,\eps^3u_3,iy_1,y_2,-iy_3),
\end{equation*}
where $\eps=\exp(\frac{2\pi i}{8})$. Although $\eps$ is a primitive 8th root of unity, $\sigma$ has order four because of the $\C^*$-action on the weighted projective space. The defining equations $q_0$, $q_2$ of $Y$ are in eigenspaces $0$ and $2$ respectively, and are combinations of the following monomials:
\begin{align*}
q_0 &\colon u_1^8,\ u_1^4u_3^4,\ u_3^8,\ u_1^3u_3y_1,\ u_1^2u_3^2y_2,\ u_1u_3^3y_3,\ y_2^2,\ y_1y_3, \\
q_2 &\colon u_1^6u_3^2,\ u_1^2u_3^6,\ u_1u_3^3y_1,\ u_1^4y_2,\ u_3^4y_2,\ u_1^3u_3y_3,\ y_1^2,\ y_3^2.
\end{align*}


\begin{lem}\label{lem!Z4-stable-Y} The general member $Y$ in $\cY$ is a complete intersection
\[Y_{2,4,4}\subset\PP(1,1,1,2,2,2)\]
defined by equations $x_1x_3-x_2^2=0$ and $q_i(x_1,x_2,x_3,y_1,y_2,y_3)=0$ for $i=0,2$ where $x_1=u_1^2,x_2=u_1u_3,x_3=u_3^2$ and the $q_i$ are quartics. The four $\frac14(1,1)$ singularities $Y\cap(x_1=x_2=x_3=0)$ form a $\sigma$-orbit. The quotient $X=Y/\sigma$ is a stable $\ZZ/4$-Godeaux surface with a single $\frac14(1,1)$ singularity, and there is a $\Q$-Gorenstein smoothing $X\rightsquigarrow X'$ to a smooth $\ZZ/4$-Godeaux surface.
\end{lem}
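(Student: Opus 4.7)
The plan is to rewrite $Y = Y_{8,8} \subset \PP(1,1,4,4,4)$ explicitly as a complete intersection in $\PP(1,1,1,2,2,2)$, locate its singularities on the vertex locus of the Veronese quadric cone, identify the analytic type by disentangling two nested cyclic quotients, and finally produce the $\Q$-Gorenstein smoothing by perturbing only the cone equation.

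First I would set $x_1 = u_1^2$, $x_2 = u_1 u_3$, $x_3 = u_3^2$. These are weight-$2$ sections on $\PP(1,1,4,4,4)$ satisfying the Veronese relation $x_1 x_3 = x_2^2$, and after rescaling all weights by $\tfrac12$ they take values in $\PP(1,1,1,2,2,2)$. Each monomial listed for $q_0, q_2$ is expressible in $(x_i, y_j)$ of weighted degree $4$, so $Y$ is cut out by $x_1 x_3 - x_2^2 = 0$ together with two quartics, giving $Y_{2,4,4} \subset \PP(1,1,1,2,2,2)$. Bijectivity of the substitution map follows because the sign ambiguity in recovering $(u_1,u_3)$ from $(x_1,x_2,x_3)$ is absorbed by $\lambda = -1 \in \C^{\ast}$, which already acts trivially on $\PP(1,1,4,4,4)$.

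Next I would locate the singularities. Off the locus $L = \{x_1 = x_2 = x_3 = 0\}$, the ambient quadric cone is smooth and for general coefficients a Jacobian check (routine to automate) shows $Y$ smooth there. The restriction of $q_0, q_2$ to $L$ is a pair of conics $y_2^2 + y_1 y_3 = 0$ and $y_1^2 + y_3^2 = 0$ in $\PP^2_{y_1,y_2,y_3}$ meeting transversely in four points; from the $\sigma$-weights on $(y_1,y_2,y_3)$ one checks these four form a single orbit. Determining the singularity type is the main calculation. At one such point $P$, $y_2(P) \neq 0$, so I would work in the chart $y_2 = 1$, whose ambient is $\C^5/\mu_2$ with $\mu_2$ acting as $-1$ on $(x_1,x_2,x_3)$ and trivially on $(y_1,y_3)$. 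By generality the Jacobian of $(q_0,q_2)$ in $(y_1,y_3)$ is nondegenerate at $P$, so the implicit function theorem eliminates $y_1, y_3$ and reduces the germ to $\{x_1 x_3 = x_2^2\} \subset \C^3/\mu_2$. Parametrising the quadric cone by $(s,t) \mapsto (s^2,st,t^2)$ contributes a second $\mu_2$; the combined covering $\C^2_{s,t} \to (\{x_1 x_3 = x_2^2\}/\mu_2)$ has group $\mu_4$ generated by $(s,t) \mapsto (is, it)$, since the lift of the ambient $\mu_2$ to $(s,t)$-space is multiplication by $\pm i$. This is the $\frac14(1,1)$ singularity, and because the four points form a single $\sigma$-orbit, $X = Y/\sigma$ inherits a single such point.

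For the $\Q$-Gorenstein smoothing I would perturb only the cone relation. In $\PP(1,1,1,2,2,2) \times \A^1_t$ consider the family $\cY$ defined by $x_1 x_3 - x_2^2 - t y_2 = 0$ together with $q_0 = q_2 = 0$. The fibre at $t = 0$ is $Y$. For $t \neq 0$ one solves $y_2 = (x_1 x_3 - x_2^2)/t$ and back-substitutes into $q_0, q_2$; after clearing denominators one obtains two quartics in $(x_1,x_2,x_3,y_1,y_3)$ defining a smooth complete intersection in $\PP(1,1,1,2,2)$ of exactly the shape treated in Section~\ref{sec!Z4-smooth}, hence a smooth $\ZZ/4$-covering surface $Y'$. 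Since $\PP(1,1,1,2,2,2) \times \A^1$ is $\Q$-factorial and $\cY$ is a complete intersection inside it, $K_\cY$ is $\Q$-Cartier and the family is $\Q$-Gorenstein; the $\sigma$-quotient yields the smoothing $X \rightsquigarrow X'$. The main obstacle I anticipate is tracking the two nested $\mu_2$-quotients in the singularity analysis and verifying that the combined covering group is genuinely cyclic of order $4$ rather than $\ZZ/2 \times \ZZ/2$.
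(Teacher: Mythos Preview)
Your proposal is correct and mirrors the paper's argument: the same Veronese substitution $x_1=u_1^2$, $x_2=u_1u_3$, $x_3=u_3^2$ and the same $\Q$-Gorenstein smoothing $x_1x_3-x_2^2=\lambda y_2$ appear verbatim. The one place where the paper is more economical is the singularity type: rather than tracking two nested $\mu_2$-quotients in the $\PP(1,1,1,2,2,2)$ model as you do, the paper simply observes that $Y_{8,8}\subset\PP(1,1,4,4,4)$ is quasismooth, so the four points on $(u_1=u_3=0)$ inherit the ambient $\tfrac14(1,1)$ orbifold structure directly from the $\mu_4$-stabiliser in the weight-$4$ chart.
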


\begin{proof}
The general $Y_{8,8}\subset\PP(1,1,4,4,4)$ is quasismooth with four $\frac14(1,1)$ singularities at $Y\cap(u_1=u_3=0)$. Using adjunction, $\omega_Y=\cO_Y(2)$, and we compute $p_g(Y)=3$, $K_Y^2=4$. The graded ring $R(Y,K_Y)=\bigoplus_{k\ge0} H^0(\cO_Y(2k))$ is generated by $x_1=u_1^2$, $x_2=u_1u_3$, $x_2=u_3^2$ in degree 1 and $y_1,y_2,y_3$ in degree 2, and the surface described in the statement of the Lemma is precisely that given by $\Proj R$. The $\sigma$-action extends that of Section \ref{sec!Z4-smooth} with $y_2$ invariant, and this is still fixed point free for general $Y$. The $\Q$-Gorenstein smoothing of the four singularities can be done simultaneously and $\ZZ/4$-equivariantly by varying the quadric equation to $x_1x_3-x_2^2=\lambda y_2$ to eliminate $y_2$ for $\lambda\ne0$.
\end{proof}

\begin{rem} We use the above Lemma to interchange between $Y_{8,8}\subset\PP(1,1,4,4,4)$ and $Y_{2,4,4}\subset\PP(1,1,1,2,2,2)$ without further comment.
\end{rem}

As before, there is a family $\cY^s=\cM^s\cap\cY$ of stable $\ZZ/4$-Godeaux surfaces with an involution. The action of $\tau$ on the ambient space $\PP(1,1,4,4,4)$ is
\[\tau\colon(u_1,u_3,y_1,y_2,y_3)\mapsto(iu_1,-iu_3,y_1,y_2,y_3),\]
and the defining equations of $Y$ are linear combinations of the following monomials
\begin{align*}
q_0 &\colon u_1^8,\ u_1^4u_3^4,\ u_3^8,\ u_1^2u_3^2y_2,\ y_2^2,\ y_1y_3, \\
q_2 &\colon u_1^6u_3^2,\ u_1^2u_3^6,\ u_1^4y_2,\ u_3^4y_2,\ y_1^2,\ y_3^2.
\end{align*}
Again, we can check that $\tau$ extends the smooth case of Section \ref{sec!Z4-smooth}.


\begin{prop}\label{prop!Z4-stable-involution-fixed}
Let $X$ be a stable Godeaux surface with a single $\frac14(1,1)$ point $P$ and involution $\tau$, arising as the quotient $X=Y/\sigma$ for general $Y$ in $\cY^s$. The fixed curve of $\tau$ is $R=C_1+C_2$, where the intersection $C_1\cap C_2$ is the singular point $P$. There are four isolated fixed points of $\tau$ on $X$.
\end{prop}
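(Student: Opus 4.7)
The plan is to identify the fixed locus of $\tau$ on $Y$ using the $\sigma$-equivariant weighted structure, then descend to $X = Y/\sigma$. Since the involution is inherited from the smooth case of Section~\ref{sec!Z4-smooth}, Lemma~\ref{lem!Z4-fix-tau} provides the natural template; I expect the $\frac14(1,1)$ singularity $P$ to absorb the four-point $\sigma$-orbit $Q_1$ from that lemma, while the orbits $Q_2, \dots, Q_5$ survive unchanged as the four isolated fixed points on $X$.

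To produce the fixed curves, I would compute $\Fix(\tau)$ on $\PP(1,1,4,4,4)$ using the $\C^*$-scaling: a point is $\tau$-fixed iff $\tau$ acts on its coordinates through some $\lambda$ with $\lambda^4 = 1$. Apart from the isolated case $\lambda = \pm 1$ forcing $u_1 = u_3 = 0$, the cases $\lambda = \pm i$ yield two pointwise-fixed divisors $\{u_1 = 0\}$ and $\{u_3 = 0\}$. Cutting with $Y$ gives two $\sigma$-invariant curves $\Gamma_1, \Gamma_2$ that descend to $C_1, C_2$ on $X$; their intersection $Y \cap \{u_1 = u_3 = 0\}$ is exactly the four-point $\sigma$-orbit of Lemma~\ref{lem!Z4-stable-Y} and collapses to $P$, so $C_1 \cap C_2 = \{P\}$. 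A nice consistency check is that under the degeneration $Y' \rightsquigarrow Y$, the smooth fixed curve $\{x_2 = 0\} \cap Y'$ of Lemma~\ref{lem!Z4-fix-tau} splits as $\{u_1 u_3 = 0\} \cap Y$ via the substitution $x_2 = u_1 u_3$ used to pass from $Y_{8,8}$ to $Y_{2,4,4}$.

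For the isolated fixed points on $X$, I would use the standard criterion that $\bar x \in X$ is $\tau$-fixed iff some lift $x \in Y$ satisfies $\tau(x) = \sigma^k(x)$ for some $k$. Since $\sigma$ and $\tau$ commute, each $\tau \sigma^{\pm 1}$ has order $4$ and square $\sigma^{\pm 2}$; as $\sigma^2$ acts freely on the smooth locus of $Y$ and permutes the four singular points cyclically, no contribution arises from $k = 1, 3$. The essential case is $k = 2$: a direct comparison of the ambient actions shows $\tau \sigma^2$ coincides with $\lambda = -1$ precisely on the locus $\{y_1 = y_3 = 0\}$, so the relevant extra fixed $\sigma$-orbits live in $Y \cap \{y_1 = y_3 = 0\}$.

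The main obstacle, though still routine, is enumerating these orbits. I would verify that for generic $Y$ in $\cY^s$, the zero-dimensional scheme $Y \cap \{y_1 = y_3 = 0\}$ is reduced, disjoint from the singular locus, and consists of $16$ points forming four free $\sigma$-orbits; this either follows from a direct computation on the explicit equations of Lemma~\ref{lem!Z4-stable-Y} or, alternatively, by specialisation from the corresponding count in the smooth case $Y'$ in $\cM^s$ (where the orbits $Q_2,\dots,Q_5$ of Lemma~\ref{lem!Z4-fix-tau} remain transverse to the smoothing of the $\frac14(1,1)$ point). Each such orbit descends to an isolated $\tau$-fixed point on $X$, yielding the four claimed fixed points.
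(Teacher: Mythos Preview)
Your proposal is correct and follows essentially the same route as the paper: the fixed curve on $Y$ is $(x_2=0)=(u_1u_3=0)$, splitting into two components meeting along the $\sigma$-orbit of $\tfrac14(1,1)$ points, and the isolated fixed points come from the four $\sigma$-orbits in $Y\cap(y_1=y_3=0)$. The paper's proof is a two-line reference back to Lemma~\ref{lem!Z4-fix-tau}, whereas you spell out the $\tau\sigma^k$ case analysis explicitly; this is extra care rather than a different argument.
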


\begin{proof}
This is similar to Lemma \ref{lem!Z4-fix-tau}. The fixed curve on $Y$ is defined by $(x_2=0)$, and since $x_2=u_1u_3$, $R$ splits into two components $C_1+C_2$ on $X$. Moreover, the locus $Y\cap(u_1=u_3=0)$ is the $\sigma$-orbit $Q_1$ of $\frac14(1,1)$ singularities, so the $C_i$ intersect at a single $\frac14(1,1)$ point on $X$. The isolated fixed points of $\tau$ are the images of four $\sigma$-orbits $Q_2,\dots,Q_5$ defined by $(y_1=y_3=0)$ on $Y$.
\end{proof}

Let $\phi_Y \colon \Ytilde \to Y$ be the minimal resolution of the four $\frac14(1,1)$ singularities on $Y$. Then $K_{\Ytilde}=\phi_Y^*(K_Y)-\frac12(E_1+\dots+E_4)$, where the $E_i$ are the $(-4)$-exceptional curves. Since $\Ytilde$ is an elliptic surface with $p_g=3$, by the Kodaira bundle formula, we also have $K_{\Ytilde} \sim 2F$ where $F$ is a fibre. The curves $E_i$ are therefore sections of the fibration.

Thus the elliptic fibration $\Ytilde\to\PP^1$ is the resolution of the pencil $\left<u_1,u_3\right>$, and all fibres are complete intersection curves $F_{8,8} \subset \PP(1,4,4,4)$. Unlike in the $\ZZ/3$ case, these fibres are really just hyperplane sections of $Y$, and because $K_{\Ytilde}=2F$, they are nonsingular at the $\frac14(1,1)$ points.

\begin{lem} The elliptic surface $\Ytilde$ associated to a general surface $Y$ in $\cY$ has two nonsingular fibres $F_1$, $F_3$ that are invariant under the action of $\sigma$ and 48 $I_1$ fibres. If $Y$ is in $\cY^s$, then 16 of the nodes in $I_1$ fibres form four $\sigma$-orbits of isolated $\tau$-fixed points.
\end{lem}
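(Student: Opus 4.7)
The plan is to mimic the argument of Proposition \ref{prop!I4-fibre}: first identify the $\sigma$-invariant fibres explicitly and show they are smooth for generic $Y$; then use a Noether/Jacobian count to force all remaining singular fibres to be $I_1$; finally, in the presence of $\tau$, locate the $\sigma$-orbits $Q_2,\dots,Q_5$ inside the nodal locus.

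First, observe that $\sigma$ acts on the base $\PP^1_{(u_1:u_3)}$ of the pencil by $(u_1\!:\!u_3)\mapsto(u_1\!:\! iu_3)$ (an order-$4$ action), whose only fixed points are $u_1=0$ and $u_3=0$. Restricting the defining equations $q_0,q_2$ to $u_1=0$ leaves only the monomials $u_3^8,y_2^2,y_1y_3$ (in $q_0$) and $u_3^4y_2,y_1^2,y_3^2$ (in $q_2$); this cuts out a curve in $\PP(1,4,4,4)$ which is readily checked to be a smooth elliptic curve for generic coefficients. A symmetric argument handles $F_3=\{u_3=0\}$.

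Next, compute $\chi(\cO_{\widetilde Y})=\chi(\cO_Y)=4$ (rationality of $\frac14(1,1)$) and $K_{\widetilde Y}^2=K_Y^2-4\cdot\frac14=0$, so Noether gives $\chi_{\mathrm{top}}(\widetilde Y)=48$. The Kodaira formula $K_{\widetilde Y}\sim 2F$ (established in the paragraph preceding the lemma) shows that the fibration is relatively minimal with no multiple fibres, so $48=\sum_i e(F_i)$ over singular fibres, and it suffices to show every singular fibre is of type $I_1$. As in the $\Z/3$ case, let $S$ be the total space of the family of hyperplane sections (minus the two smooth invariant fibres) embedded in $\PP(1,2,2,2,2,2)\times(\PP^1\setminus\{0,\infty\})$ as the relative $(2,4,4)$-complete intersection, and let $J\subset S$ be the Jacobian subscheme cut out by the appropriate minors of the relative Jacobian matrix. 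A degree count analogous to the one in Proposition \ref{prop!I4-fibre} gives an expected length of $48$ for $J$, and a computer check at a general member verifies that $J$ is reduced and consists of $48$ points lying on $48$ distinct fibres. Each such fibre is therefore nodal, hence of type $I_1$.

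Finally, restrict to $Y\in\cY^s$. By Lemma \ref{lem!Z4-fix-tau} (and its extension to the stable case used in Proposition \ref{prop!Z4-stable-involution-fixed}), the $\sigma$-orbits $Q_2,\dots,Q_5$ consist of $16$ points on $Y$ characterized by $y_1=y_3=0$. Since $\sigma$ acts on the base with order $4$ away from $F_1,F_3$, each $Q_j$ (which avoids $F_1\cup F_3$, as is clear from substituting $y_1=y_3=0$ in the equations displayed above for $F_1$, $F_3$) meets exactly $4$ distinct fibres, and for generic parameters the four orbits meet pairwise disjoint collections of fibres, yielding $16$ fibres in total. It remains to check that each point of $Q_j$ is a node of its fibre: in affine coordinates obtained by dehomogenising $x_1=1$, using $x_3=x_2^2$ from the quadric to eliminate $x_3$, the fibre is cut out of $\A^3_{y_1,y_2,y_3}$ by $q_0,q_2$ with $x_2$ fixed. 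At $(y_1,y_2,y_3)=(0,\beta,0)$ (where $\beta$ is determined by $q_0=q_2=0$), the linear parts of $q_0$ and $q_2$ in $(y_1,y_3)$ vanish identically, while the quadratic parts give a rank-$2$ quadric $\gamma y_1y_3+\delta(\eta y_1^2+\zeta y_3^2)=0$ for generic parameters; hence the fibre has an ordinary double point there. The $16$ points therefore account for $16$ of the $48$ $I_1$ fibres, proving the last claim.

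The main obstacle is the last step: verifying directly that the points with $y_1=y_3=0$ are genuine nodes of their fibres, rather than smooth points of the total space that happen to lie on smooth fibres. Once the Jacobian count guarantees the right total number of nodes, and once one establishes that these $16$ points are necessarily among them (either by the local computation sketched above or by observing that they are forced to be singular points of the fibres because $\tau$ acts trivially on the tangent directions along $(y_1,y_3)$ while the fibre is preserved only as a pair with its $\tau$-partner), the rest is a bookkeeping exercise.
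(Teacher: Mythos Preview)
Your proof is essentially correct and follows the same strategy as the paper, which simply says the argument is ``an easier version of the argument in the last paragraph of the proof of Proposition \ref{prop!I4-fibre}.'' Your write-up is considerably more detailed than what the paper provides, and the local computation you sketch for the $\tau$-fixed nodes is the right one: in the $\tau$-symmetric family the only $y_1,y_3$-monomials in $q_0,q_2$ are $y_1y_3$, $y_1^2$, $y_3^2$, so the fibrewise Jacobian drops rank at $y_1=y_3=0$, and the Hessian in $(y_1,y_3)$ is generically nondegenerate.

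Two small corrections. First, your Noether computation has an arithmetic slip: each $\tfrac14(1,1)$ resolution lowers $K^2$ by $1$, not by $\tfrac14$ (from $K_{\widetilde Y}=\phi^*K_Y-\tfrac12\sum E_i$ one gets $K_{\widetilde Y}^2=K_Y^2+\tfrac14\sum E_i^2=4-4=0$). Your conclusion $K_{\widetilde Y}^2=0$ and $\chi_{\mathrm{top}}=48$ is nonetheless correct. Second, the alternative argument you float in the last sentence---that the points are forced to be singular because ``$\tau$ acts trivially on the tangent directions along $(y_1,y_3)$''---does not work as stated: on $Y$ these $16$ points are \emph{not} $\tau$-fixed (one checks $\tau$ acts on them as $\sigma^2$), so $\tau$ gives no local constraint at any one of them. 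Only their images on $X$ are $\tau$-fixed, and even there the $\tau$-action on the tangent space does not by itself distinguish a node from a smooth point of the fibre. So the local computation is genuinely needed; fortunately, it is the argument you give first.
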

\begin{proof}
The automorphism $\sigma$ acts on the base of the fibration only, and the two fibres $F_1\colon(u_1=0)$ and $F_3\colon(u_3=0)$ are invariant. A direct computation shows that for general $Y$, $F_i$ are nonsingular, and the computation of singular fibres and fixed points is again an easier version of the argument in the last paragraph of the proof of Proposition \ref{prop!I4-fibre}.
\end{proof}

Let $\Xtilde$ be the elliptic surface obtained as the quotient $\Ytilde/\sigma$.



\begin{cor}
The minimal resolution $\Xtilde$ of $X$ is a $(4,4)$-Dolgachev surface, and the $(-4)$-curve $E$ is a $4$-section. The involution $\tau$ fixes the two multiple fibres pointwise, and $E$ is invariant under $\tau$, with two fixed points at the intersection with the multiple fibres. The quotient $\Xtilde/\tau$ is a nodal Enriques surface with a $(-2)$-curve which is a $2$-section of the elliptic fibration, and four $A_1$-singularities lying on distinct fibres.
\end{cor}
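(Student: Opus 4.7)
My plan is to transfer the $\sigma$- and $\tau$-data on $\Ytilde$ down step by step. Since $\sigma$ acts freely on $Y$ and permutes the four $\frac14(1,1)$-singularities cyclically, it acts freely on the resolution $\Ytilde$; the induced action of $\sigma$ on the base $\PP^1$ of the elliptic fibration $\Ytilde\to\PP^1$ has order $4$ and ramifies precisely at the two $\sigma$-invariant fibres $F_1$ and $F_3$. Hence $\Xtilde=\Ytilde/\sigma$ inherits an elliptic fibration over $\PP^1/\sigma\cong\PP^1$ with two multiple fibres of multiplicity $4$ at the images of $F_1$ and $F_3$. Combined with $K_{\Xtilde}^2=K_X^2-1=0$ and $p_g=q=0$ inherited from the Godeaux surface $X$, this identifies $\Xtilde$ as a $(4,4)$-Dolgachev surface. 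The four $(-4)$-exceptional curves $E_1,\ldots,E_4$ form a free $\sigma$-orbit of sections, so their image $E$ on $\Xtilde$ is a single smooth rational curve; a direct intersection calculation, using that $E_1+\cdots+E_4$ is the pullback of $E$ under the \'etale quotient $\Ytilde\to\Xtilde$, yields $E^2=-4$ and $E\cdot F=4$ for a general fibre $F$ of $\Xtilde$.

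Next I would describe the $\tau$-action on $\Xtilde$. Because $\tau$ commutes with $\sigma$, it descends to $\Xtilde$. The pointwise-fixed locus of $\tau$ on $\Ytilde$ is $\phi_{Y}^{-1}(x_2=0)=F_1\cup F_3$, which descends to the two reduced multiple fibres of $\Xtilde$, fixed pointwise. The sixteen isolated $\tau$-fixed points of $\Ytilde$ identified in the preceding lemma form four $\sigma$-orbits, descending to four isolated $\tau$-fixed points of $\Xtilde$, each on a distinct $I_1$-fibre. Each section $E_i$ is $\tau$-invariant with exactly two fixed points, $E_i\cap F_1$ and $E_i\cap F_3$; these descend to the two $\tau$-fixed points of $E$ at its intersections with the two multiple fibres.

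The final step, and the main technical obstacle, is to identify $Z=\Xtilde/\tau$ as a nodal Enriques surface. The four isolated $\tau$-fixed points on $\Xtilde$ yield four $A_1$-singularities of $Z$ on distinct fibres; the image $\bar E$ of $E$ is a $(-2)$-curve that is a $2$-section of the induced elliptic fibration, by applying the double-cover formula $(\pi^{*}\bar E)^2=2\bar E^2$ together with $\pi^{*}\bar E=E$, valid because $\tau$ acts nontrivially on $E$. To verify that the minimal resolution $\widetilde Z$ is Enriques, I would compute $\chi(\cO_{\widetilde Z})=1$ and $K_{\widetilde Z}^2=0$ via Riemann--Hurwitz on the smooth double cover $\widetilde{\Xtilde}\to\widetilde Z$ obtained by blowing up the four isolated fixed points, whose branch divisor is the disjoint union of the two images of $F_1^{*},F_3^{*}$ and the four $(-2)$-curves resolving the $A_1$'s. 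I would then show $2K_{\widetilde Z}\sim 0$ by pulling back to $\widetilde{\Xtilde}$ and expressing the result as a multiple of $F_3^{*}-F_1^{*}\in\Pic(\Xtilde)$, which is $4$-torsion on a $(4,4)$-elliptic surface. The shortest route avoids this direct check by invoking the classification of involutions on surfaces with $p_g=0$ from \cite{CCML} and \cite{MLP}, combined with deformation to the smooth case where $X'/\tau$ was already identified as a nodal Enriques surface by \cite{KL}.
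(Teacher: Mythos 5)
Your proposal is correct in substance, and its first two parts coincide with the paper's own (much terser) proof: the paper likewise obtains the two multiplicity-$4$ fibres from the $\sigma$-invariant fibres $F_1,F_3$, uses the description of $\Fix(\tau)$ (the curve $(x_2=0)=F_1\cup F_3$ plus the four isolated orbits on $I_1$-fibres) to see that $\tau$ fixes the two multiple fibres pointwise and leaves $E$ invariant with two fixed points, and gets the $(-2)$-curve $2$-section exactly as you do via $\pi^*\bar E=E$. Where you genuinely differ is the Enriques step: the paper's argument is simply that since the half-fibres are pointwise fixed, their multiplicity drops from $4$ to $2$ on the quotient, and an elliptic surface with $p_g=q=0$, $\chi=1$ and two multiplicity-$2$ fibres has $K$ numerically trivial, hence is Enriques; you instead verify this by Riemann--Hurwitz on the blown-up double cover together with the torsion of $F_3^*-F_1^*$, which is an essentially equivalent computation and works, with one small repair: $\pi^*$ is not injective on $\Pic$, so from $\pi^*(2K_{\Ztilde})\sim 0$ you only get $4K_{\Ztilde}\sim 0$ directly; it is cleaner to conclude $K_{\Ztilde}\equiv 0$ numerically (numerical triviality descends through the finite map), which with $\chi=1$, $q=0$ forces $\Ztilde$ minimal and Enriques by classification. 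On the other hand, your suggested ``shortest route'' should be discarded: the classification of \cite{CCML}, \cite{MLP} concerns involutions on smooth numerical Godeaux surfaces of general type, so it does not apply to the stable $X$ (whose resolution $\Xtilde$ is properly elliptic), and a flat limit of Enriques quotients need not be Enriques without further argument. Two minor slips: the pointwise fixed curve on $\Ytilde$ is the proper transform of $F_1\cup F_3$, not all of $\phi_Y^{-1}(x_2=0)$, which also contains the sections $E_i$; and the identification of $\Xtilde$ as a $(4,4)$-Dolgachev surface should note minimality, which follows since $K_{\Xtilde}\equiv\frac12 F$ is nef.
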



\begin{proof}
The two $\sigma$-invariant fibres $F_i$ on $\Ytilde$ give rise to two fibres of multiplicity $4$ on the quotient $\Xtilde$. From Proposition \ref{prop!Z4-stable-involution-fixed}, these two fibres are pointwise fixed by $\tau$, and so their multiplicity on $X/\tau$ drops to two. The image of $E$ is a $(-2)$-curve because $E$ is $\tau$-invariant.
\end{proof}


\begin{cor}
For general $Y$ in $\cY^s$ with a $\frac{1}{4}(1,1)$ singularity, the quotient surface $Y/\tau$ is a nodal K3 surface with four $A_1$ singularities. The ramification curve breaks into two components, each of which passes through all four $A_1$ singularities.
\end{cor}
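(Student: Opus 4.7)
The plan is to follow the strategy of Corollary \ref{cor!Z4-smooth-K3-quotient} in the stable setting of $\cY^s$, using the explicit model $Y=Y_{2,4,4}\subset\PP(1,1,1,2,2,2)$ from Lemma \ref{lem!Z4-stable-Y}. First I would compute the $\tau$-invariant subring of the homogeneous coordinate ring of $\PP(1,1,1,2,2,2)$: it is generated by $x_2$ of weight $1$ together with the weight-$2$ invariants $v_1=x_1^2$, $v_2=x_1x_3$, $v_3=x_3^2$, $y_1$, $y_2$, $y_3$, modulo the monomial syzygy $v_1v_3=v_2^2$. The new feature compared to the smooth case is the quadric relation $x_1x_3-x_2^2=0$ defining $Y$, which in invariants reads $v_2=x_2^2$ and eliminates the weight-$1$ coordinate $x_2$ in favour of $v_2$. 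A monomial-by-monomial check of the allowed terms of $q_0$ and $q_2$ for $\cY^s$ (using for instance $x_1^3x_3=v_1v_2$, $x_1^2y_2=v_1y_2$, $x_2^2y_2=v_2y_2$) shows that both quartics are already expressible in $v_1,v_2,v_3,y_1,y_2,y_3$ alone, so the well-forming step used in the smooth case applies verbatim: $Y/\tau$ embeds in $\PP^5$ as the complete intersection of the three quadrics $v_1v_3-v_2^2$, $q_0$, $q_2$, hence is a nodal K3 by adjunction.

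Next I would identify the singular points. The quadric $v_1v_3=v_2^2$ is a rank-$3$ quadric in $\PP^5$, singular along the $2$-plane $\Pi=\{v_1=v_2=v_3=0\}$ with residual coordinates $y_1,y_2,y_3$. The restrictions of $q_0$ and $q_2$ to $\Pi$ are two plane conics, which for generic coefficients meet transversally at four points; at each of these the K3 acquires a transverse slice of the quadric cone, hence an $A_1$ singularity, and no other singularities occur. As a consistency check, these four points are the images of the four $\tfrac14(1,1)$ singularities of $Y$ forming the $\sigma$-orbit $Q_1$ at $x_1=x_2=x_3=0$; since $Q_1$ is pointwise $\tau$-fixed, a short local invariant-theory computation on the $\ZZ/4$-cover $\C^2$ of $\tfrac14(1,1)$ (the combined group is $\ZZ/4\times\ZZ/2$ with invariant ring $\C[u_1^4,u_1^2u_3^2,u_3^4]$, presenting an $A_1$) confirms this identification.

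For the last statement I would use the fixed-locus description from the proof of Proposition \ref{prop!Z4-stable-involution-fixed}: the $\tau$-fixed curve on $Y$ is $(x_2=0)\cap Y$, which on $Y_{2,4,4}$ splits via $x_1x_3=x_2^2=0$ into $C_1^Y=(x_1=x_2=0)\cap Y$ together with $C_2^Y=(x_2=x_3=0)\cap Y$, meeting exactly at the four points of $Q_1$. Their images $B_1,B_2$ on the K3 lie respectively in the distinct slices $\{v_1=v_2=0\}$ and $\{v_2=v_3=0\}$, so they are distinct irreducible components of the branch curve; since $Q_1\subset C_i^Y$ for $i=1,2$, each $B_i$ passes through the four $A_1$ points, which are exactly the image of $Q_1$ and coincide with $\{v_1=v_2=v_3=0\}\cap\text{K3}$. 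The main obstacle I expect is the bookkeeping needed to verify that $x_2$ disappears from $q_0$ and $q_2$ after substituting $v_2=x_2^2$; once that is established, the rest is structural and closely parallels the smooth case.
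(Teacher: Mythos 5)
Your proof is correct. It coincides with the paper's argument on the first step --- both reduce to the $(2,2,2)\subset\PP^5$ model exactly as in Corollary \ref{cor!Z4-smooth-K3-quotient}, starting from the description of $Y$ in Lemma \ref{lem!Z4-stable-Y} --- but it identifies the four $A_1$ points by a different mechanism: you argue globally that the rank-$3$ quadric $v_1v_3=v_2^2$ is singular along the plane $\Pi=\{v_1=v_2=v_3=0\}$, that the restrictions of $q_0,q_2$ to $\Pi$ are conics meeting transversally in four points, and that a transverse slice of the quadric cone at each such point is an $A_1$; the paper instead computes locally at a point of $Q_1$, where the $\frac14(1,1)$ orbifold group combines with $\tau$ acting as $\frac14(1,3)$ to give invariants $u_1^4,\,u_1^2u_3^2,\,u_3^4$ with the single relation of an $A_1$ --- precisely the computation you add as a consistency check. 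Your global route has the advantage of exhibiting the singular points directly on the $(2,2,2)$ model as $\Pi\cap (Y/\tau)$, which you then exploit for the branch-curve statement; the paper's local route is shorter and makes the correspondence with the $\frac14(1,1)$ points of $Y$ immediate. Two harmless imprecisions: the relation $x_1x_3=x_2^2$ does not literally eliminate the odd-weight invariant $x_2$ --- rather it ensures all three defining equations involve $x_2$ only through $x_2^2=v_2$, so the degree-halving (well-forming) step of the smooth case goes through, which is what you in effect invoke; and the assertions that the two conics meet transversally and that no further singularities occur are genericity claims, the latter following because $\Fix(\tau)\cap Y=(x_2=0)\cap Y$ has no isolated fixed points and the general $Y$ in $\cY^s$ is smooth away from $Q_1$ --- the paper is equally terse on both points. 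Finally, your explicit splitting of the fixed curve via $x_1x_3=x_2^2=0$ into $(x_1=x_2=0)$ and $(x_2=x_3=0)$, each containing $Q_1$, spells out what the paper leaves implicit from Proposition \ref{prop!Z4-stable-involution-fixed}.
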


\begin{proof}
The fact that the quotient is an intersection of three quadrics is similar to Corollary \ref{cor!Z4-smooth-K3-quotient}, starting from the description of $Y$ in Lemma \ref{lem!Z4-stable-Y}.

The singularities are again induced by the $\sigma$-orbit $Q_1$ on $Y$, but this time $Q_1$ comprises four $\frac14(1,1)$ singularities. The local orbifold coordinates near a $\frac14(1,1)$ point $P$ of $Q_1$ are $u_1$, $u_3$, and the action of $\tau$ on these coordinates is $\frac14(1,3)$. Thus the invariant monomials of the composite action are $w_1=u_1^4$, $w_2=u_1^2u_3^2$, and $w_3=u_3^4$ with single relation $w_1w_3=w_2^2$, and the image of $P$ on $Y/\tau$ is an $A_1$ singularity.
\end{proof}


\subsection{Stable rational $\ZZ/4$-Godeaux surfaces}

Following the strategy explained in Section \ref{s23}, we look for stable $\ZZ/4$-Godeaux surfaces $\Xtilde$ with an elliptic fibration where the multiple fibres become nodal. As in Section \ref{s23}, this behaviour is induced by $Y$ having a single $\sigma$-fixed point in the hyperplane $(x_1=0)$ (or $(x_3=0)$). Such a fixed point must be an $A_3$ singularity or worse. This condition is independent of the presence of $\frac14(1,1)$ singularities.

\begin{lem}
The following divisors $\cB_1,\cB_3\subset\cM$ defined by
\begin{align*}
\cB_1 &\colon \text{no }x_1^4\text{ monomial in }q_0 \\
\cB_3 &\colon \text{no }x_3^4\text{ monomial in }q_0
\end{align*}
parametrise surfaces $Y$ with an $A_3$ singularity at $P_1=(1,0,0,0,0)$ (respectively $P_3=(0,0,1,0,0)$) which is fixed under the action of $\sigma$. Moreover, the quotient $X=Y/\sigma$ of a general surface in $\cB_i$ is a Godeaux surface with a $\frac 1{16}(1,3)$ singularity.
\end{lem}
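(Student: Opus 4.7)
My plan is to treat $\cB_1$ only; the case $\cB_3$ follows by the coordinate symmetry $(x_1,y_1)\leftrightarrow(x_3,y_3)$, which normalises the $\sigma$-action and preserves the monomial content of $q_0$ and $q_2$.

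First I would verify that $P_1\in Y$ precisely when $Y\in\cB_1$, and that $\sigma(P_1)=P_1$. At $P_1=(1,0,0,0,0)$ every monomial in $q_0$ other than $x_1^4$ vanishes (each involves one of $x_2,x_3,y_1,y_3$), and every monomial of $q_2$ vanishes at $P_1$; hence $q_0(P_1)=q_2(P_1)=0$ iff the coefficient of $x_1^4$ in $q_0$ is zero. Moreover $\sigma(1,0,0,0,0)=(i,0,0,0,0)$, which represents the same point of $\PP(1,1,1,2,2)$.

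Next I would work in the affine chart $x_1=1$ with coordinates $(x_2,x_3,y_1,y_3)$ and identify the singularity type at $P_1$. For generic $Y\in\cB_1$, the coefficient $c_{11}$ of $x_1^3x_3$ in $q_2$ is nonzero, so $q_2=0$ can be solved analytically for $x_3$ as a power series in $(x_2,y_1,y_3)$ of order $\ge 2$. Substituting into $q_0$ produces a local hypersurface in $\A^3$ whose quadratic part is $c_6x_2y_1+c_8y_1y_3=y_1(c_6x_2+c_8y_3)$, a product of two independent linear forms; its cubic part vanishes because the only cubic monomials in $q_0$ (namely $x_1x_2^2x_3$ and $x_2x_3y_3$) contain $x_3$, now of order $\ge 2$. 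By the Splitting Lemma, in new coordinates $X=y_1$, $Y=c_6x_2+c_8y_3$, $W=x_2$ the local equation becomes $XY+g(W)$ with $g$ of order $\ge 4$. To pin down $g$, I would restrict to the line $\{X=Y=0\}$, i.e. $y_1=0$ and $y_3=-(c_6/c_8)W$, along which $x_3$ reduces to $\alpha W^2+O(W^3)$ for an explicit $\alpha$ in the parameters of $q_2$, and then collect the coefficient of $W^4$; this is a polynomial in the parameters which is generically nonzero. Hence $P_1$ is an $A_3$ singularity.

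Finally I would compute the induced $\sigma$-action on the local coordinates and take the cyclic quotient. Rescaling by $\lambda=-i$ to bring $\sigma(1,0,0,0,0)=(i,0,0,0,0)$ back to the standard chart, $\sigma$ acts on $(x_2,x_3,y_1,y_3)$ as $(ix_2,-x_3,-iy_1,iy_3)$, giving weights $(1,2,3,1)\bmod 4$. After eliminating $x_3$, the weights on $(X,Y,W)$ are $(3,1,1)\bmod 4$, and the model equation $XY+W^4=0$ is $\sigma$-invariant. Realising $A_3\cong\A^2/\mu_4$ via $X=u^4$, $Y=v^4$, $W=uv$ with $\mu_4\ni\omega\colon(u,v)\mapsto(\omega u,\omega^{-1}v)$ and $\omega=e^{2\pi i/4}$, the equivariant lift of $\sigma$ must send $(u,v)\mapsto(\zeta^3 u,\zeta v)$ with $\zeta=e^{2\pi i/16}$, and one checks that $\sigma^4$ coincides with a generator of $\mu_4$. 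Hence $\langle\sigma,\mu_4\rangle$ is cyclic of order $16$, and the quotient at the image of $P_1$ is the cyclic singularity $\frac{1}{16}(1,3)$. The main computational obstacle is the verification that the $W^4$ coefficient in the splitting-lemma normal form is generically nonzero; the rest is bookkeeping and standard cyclic-quotient arithmetic.
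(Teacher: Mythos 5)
Your proposal is correct and follows essentially the same route as the paper's proof: localise at $P_{x_1}$ in the chart $x_1=1$, use the linear term $x_1^3x_3$ of $q_2$ to eliminate $x_3$, recognise the residual rank-two quadratic part plus a generically nonzero $W^4$ term as an $A_3$ point, and read off the $\sigma$-weights $(3,1,1)$ on the local coordinates to conclude the quotient is $\frac1{16}(1,3)$. Your write-up simply makes explicit two steps the paper leaves implicit (the splitting-lemma computation of the $W^4$ coefficient and the $\A^2/\mu_4\to\Z/16$ lifting argument), which is a faithful elaboration rather than a different method.
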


\begin{proof}
Consider a general surface $Y$ in $\cB_1$. Since $q_0$ does not contain the monomial $x_1^4$, the coordinate point $P=P_{x_1}$ is contained in $Y$. Ignoring coefficients, the leading terms of $q_0$, $q_2$ at $P$ are
\[y_1y_3+x_2y_1+x_3^2+h.o.t., \ x_3+x_2^2+h.o.t.\]
Thus eliminating $x_3$ with $x_2^2$ and a coordinates change shows that $Y$ has an $A_3$ singularity at $P$, and clearly $P$ is fixed under the action of $\sigma$. Computing with local coordinates near $P$, we see that the action of $\sigma$ on $(y_1,y_3,x_2)$ is $(-iy_1,iy_3,ix_2)$ and so $P$ induces a $\frac 1{16}(1,3)$ point on $X$.
\end{proof}


\begin{lem}\label{lem!Z4-transversal} All intersections between $\cY$, $\cB_1$ and $\cB_3$ are transversal.
\end{lem}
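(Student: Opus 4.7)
The plan is to follow exactly the strategy of Lemma~\ref{lem!Z3-transversal}: exhibit, at a general $Y$ in each pairwise (and triple) intersection, explicit one-parameter perturbations of the defining coefficients of $q_0$, $q_2$ and the quadric relation that move out of one divisor while staying in the others. Since three different monomial coefficients control the three divisors, the resulting tangent vectors in $\cM$ will be linearly independent.

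Concretely, $\cB_1$ is cut out by the vanishing of the coefficient of $x_1^4$ in $q_0$ (of $u_1^8$ in the $\PP(1,1,4,4,4)$ model) and $\cB_3$ by the coefficient of $x_3^4$ in $q_0$ (of $u_3^8$). The divisor $\cY$ sits inside $\cM$ as the locus where the quadric relation $x_1x_3-x_2^2=\lambda y_2$ of Lemma~\ref{lem!Z4-stable-Y} has $\lambda=0$, so that the extra variable $y_2$ survives in $\Proj R(Y,K_Y)$ and the four coordinate points $(x_1=x_2=x_3=0)$ become $\frac14(1,1)$ singularities. For a general $Y$ in $\cY\cap\cB_1\cap\cB_3$ I would then write down the three independent perturbations: (i) set $\lambda\ne0$ with all other coefficients of $q_0,q_2$ unchanged, which smooths the four $\frac14(1,1)$ singularities simultaneously and $\sigma$-equivariantly while keeping the coefficients of $x_1^4$ and $x_3^4$ in $q_0$ equal to zero, hence stays in $\cB_1\cap\cB_3$; (ii) add $\varepsilon x_1^4$ to $q_0$ (equivalently $\varepsilon u_1^8$), which smooths the $\sigma$-fixed $A_3$ at $P_1$ on $Y$ and hence the $\tfrac1{16}(1,3)$ on $X$, while preserving both $\lambda=0$ and the vanishing of the $x_3^4$ coefficient, hence stays in $\cY\cap\cB_3$; (iii) the analogous perturbation $\varepsilon' x_3^4$ for the $A_3$ at $P_3$, which stays in $\cY\cap\cB_1$.

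Because the three perturbations change three different monomial coefficients of the defining equations, their derivatives at $0$ span a $3$-dimensional subspace of the tangent space of the parameter space transverse to $\cY\cap\cB_1\cap\cB_3$; dividing by the natural reparametrisation $\C^*$-action (which acts diagonally on the coefficients of $q_0,q_2$) does not affect this conclusion. This gives the transversality of the three divisors at a general point of the triple intersection and, by restriction, at general points of each pairwise intersection.

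The only point requiring care, and where I expect the main obstacle to lie, is the verification that $\cY$ is genuinely cut out by a single parameter equivalent to $\lambda$: one must check that the boundary stratum of KSBA-stable surfaces with a single $\frac14(1,1)$ (or its $\sigma$-orbit of four on the cover) is locally a smooth Cartier divisor in $\cM$ and that the $\Q$-Gorenstein smoothing parameter produced in Lemma~\ref{lem!Z4-stable-Y} is a local equation for it. Once this is in hand, the transversality of $\cY$ with $\cB_1$ and $\cB_3$ is immediate from the fact that $\lambda$, the $x_1^4$-coefficient of $q_0$, and the $x_3^4$-coefficient of $q_0$ are visibly independent parameters in Reid's explicit parameter space.
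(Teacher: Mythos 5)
Your proposal is correct and follows essentially the same route as the paper: the paper also works with the model $Y_{8,8}\subset\PP(1,1,4,4,4)$, smooths the $\frac14(1,1)$ orbit via the quadric deformation $x_1x_3-x_2^2=\lambda y_2$ of Lemma~\ref{lem!Z4-stable-Y}, and smooths each $\frac1{16}(1,3)$ point by reinstating the monomial $u_1^8$ (resp.\ $u_3^8$) in $q_0$, these being independent coefficient perturbations. The paper does not belabour the local structure of $\cY$ in $\cM$ any more than you do; exhibiting the independent $\Q$-Gorenstein smoothings is exactly the content of the transversality claim as it is used there.
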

\begin{proof} The proof is similar to that of Lemma \ref{lem!Z3-transversal}, and we do not give full details. We only remark that a surface $Y$ in $\cY\cap\cB_1$ is realised as a complete intersection $Y_{8,8}\subset\PP(1,1,4,4,4)$ for which $q_0$ does not contain the monomial $u_1^8$. The $\Q$-Gorenstein smoothing of the $\frac14(1,1)$ point is in Lemma \ref{lem!Z4-stable-Y}, and the $\frac1{16}(1,3)$ point is smoothed by allowing $u_1^8$ to appear again.
\end{proof}

In this case, the $A_3$ singularities on $Y$ are fixed under $\tau$, so it is not true that $\cB_i^s=(\cB_1\cap\cB_3)^s$ here, unlike Section \ref{s23}.


Let $X_1$ be the stable Godeaux surface with one $\frac{1}{16}(1,3)$, $X_2$ with singularities $\frac{1}{4}(1,1)$ and $\frac{1}{16}(1,3)$, and $X_3$ with one $\frac{1}{4}(1,1)$ and two $\frac{1}{16}(1,3)$ singularities. Let $Y_i$ be the corresponding cyclic cover of $X_i$. According to Lemma \ref{lem!Z4-transversal}, the surfaces are related under $\Q$-Gorenstein degenerations as shown in Figure \ref{f5}.

\begin{figure}[htbp]
\centering
\includegraphics[width=14cm]{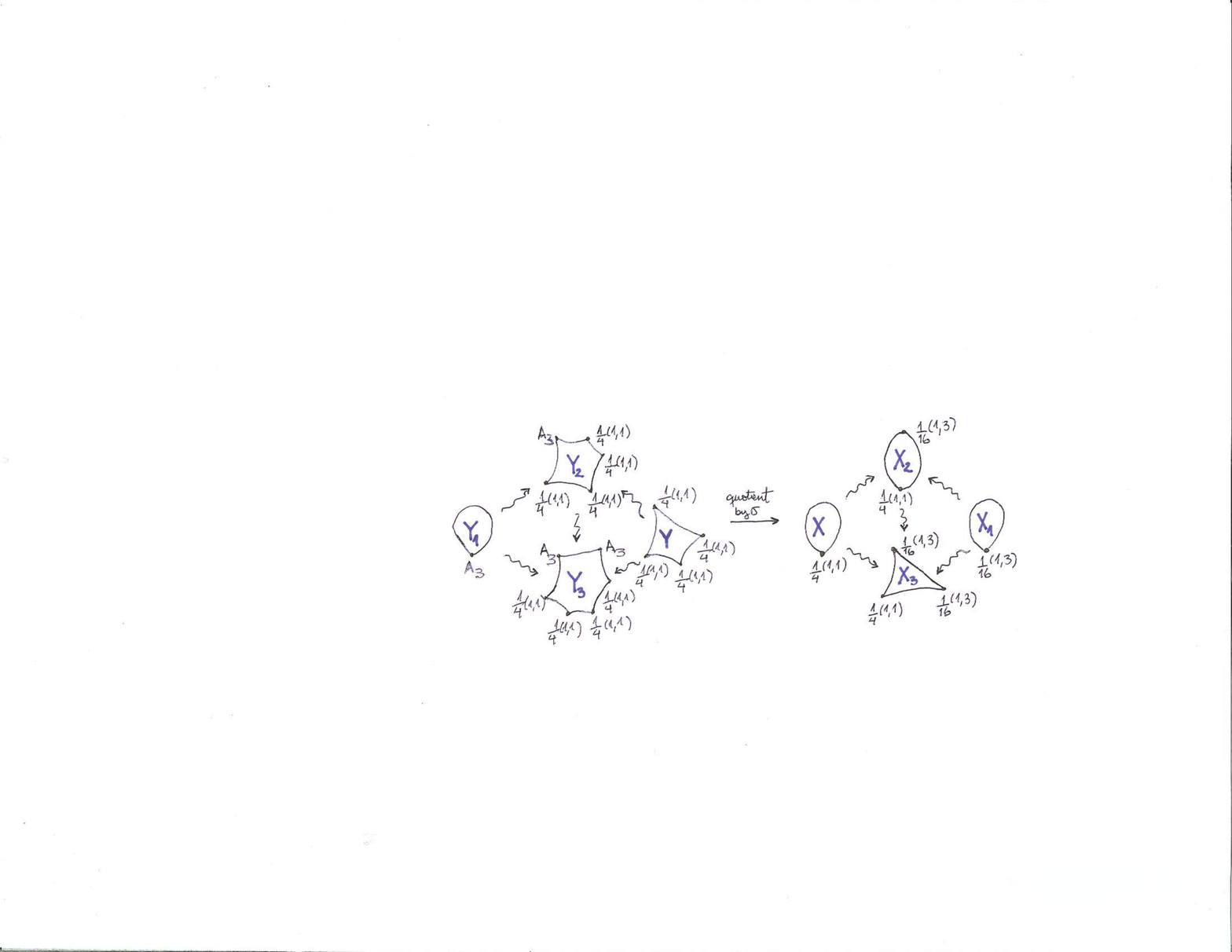}
\caption{Stable $\Z/4$-degenerations of $Y$ and $X$}
\label{f5}
\end{figure}

\begin{prop}
The surfaces $X_i$ are rational for $i=2,3$.
\end{prop}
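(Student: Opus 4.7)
The plan is to adapt Proposition \ref{prop!rational-X23} essentially verbatim to the $\Z/4$ setting. For $i=2,3$, let $\widetilde Y_i\to Y_i$ be the minimal resolution of the four $\frac14(1,1)$ singularities of $Y_i$; the pencil $\langle u_1,u_3\rangle$ induces an elliptic fibration $\widetilde Y_i\to\P^1$ in which the $A_3$ singularity (for $i=2$) or pair of $A_3$ singularities (for $i=3$) sit at the node(s) of the $\sigma$-invariant fibre(s) $F_1$ and/or $F_3$. The quotient $\widetilde X_i:=\widetilde Y_i/\sigma$ is the minimal resolution of $X_i$ at the $\frac14(1,1)$ singularity alone, and the induced elliptic fibration $\widetilde X_i\to\P^1$ has multiplicity-$4$ nodal fibre(s) carrying the $\frac{1}{16}(1,3)$ singularity(ies) at the node(s).

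Next, I would take the minimal resolution of each $\frac{1}{16}(1,3)$ point, which introduces an exceptional chain of self-intersections $[-6,-2,-2]$. The $n=4$ version of the linear-algebra computation implicit in the $\Z/3$ case ($\frac{1}{9}(1,2)=[-5,-2]$) yields $\pi^*(4I_1)=4\widetilde I_1+E_1+2E_2+3E_3$, where $\widetilde I_1$ is a $(-1)$-curve meeting the ends $E_1$ and $E_3$ of the chain at one point each. Since $\gcd(4,1,2,3)=1$, the resulting cycle of four rational curves is a non-multiple fibre of the fully-resolved elliptic fibration: the multiplicity-$4$ structure at that fibre has been destroyed by the resolution of the Wahl singularity.

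For $i=2$, the other multiplicity-$4$ fibre (the image in $\widetilde X_2$ of the smooth $\sigma$-invariant fibre in $\widetilde Y_2$) is unaffected, so after contracting $(-1)$-curves the resolved surface is a relatively minimal elliptic surface with $p_g=q=0$ and exactly one multiple fibre of multiplicity $4$. This is a Halphen pencil of index $4$ on $\P^2$, so $X_2$ is rational. For $i=3$, resolving both $\frac{1}{16}(1,3)$ singularities destroys both multiple fibres, producing a relatively minimal elliptic surface with $p_g=q=0$ and no multiple fibres, hence a blown-up pencil of cubics on $\P^2$; so $X_3$ is rational as well.

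The main point that needs care is the local assertion that the minimal resolution of the Wahl singularity $\frac{1}{n^2}(1,n-1)$ at the node of a multiplicity-$n$ fibre destroys the multiple-fibre structure and produces a non-multiple cycle of $n$ rational curves (with $\widetilde I_1$ a $(-1)$-curve). This is Kawamata's confluence construction of a nodal with a multiple $I_0$ fibre read in reverse, and it is settled by the same intersection calculation in the resolution as was used tacitly for $n=3$; once it is in place, everything else is a routine transcription of Proposition \ref{prop!rational-X23}.
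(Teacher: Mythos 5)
Your proposal is correct and is essentially the paper's own argument: the paper proves this proposition by simply invoking the $\Z/3$ case (Proposition \ref{prop!rational-X23}), and your write-up is exactly that proof transcribed to $n=4$, with the extra (correct) explicit computation $\pi^*(4I_1)=4\widetilde I_1+E_1+2E_2+3E_3$ and $\widetilde I_1^2=-1$ playing the role of the paper's ``the multiple fibre becomes an $I_3$ fibre and the proper transform of the $I_1$ is a $(-1)$-curve.'' The conclusions via a Halphen pencil of index $4$ (for $X_2$) and a pencil of cubics (for $X_3$) match the paper's reasoning.
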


\begin{proof}
This is the same proof as in the $\Z/3$ case, Proposition \ref{prop!rational-X23}.


\end{proof}

\begin{prop}
The surface $X_1$ is rational.
\end{prop}

\begin{proof}
Let us consider a $\Q$-Gorenstein deformation $X_1 \rightsquigarrow X_2$ over a disk $\D$, so that the deformation around the singularity $\frac{1}{16}(1,3)$ is constant. Then we resolve it simultaneously (and minimally) to obtain a $\Q$-Gorenstein smoothing $X'_1 \rightsquigarrow X'_2$. By the previous proposition, in the central fibre $X'_2$ we have a $(-1)$-curve between the $(-6)$-curve and the last $(-2)$-curve of the exceptional divisor. This $(-1)$-curve intersects the $(-4)$-curve of the exceptional divisor of $\widetilde X_2 \to X_2$ at one point and transversally. Therefore, we can apply the flip used in Proposition \ref{flip}. After the flip, the deformation $X'_1 \rightsquigarrow X'_2$ becomes smooth, and so the Kodaira dimension is preserved. By the previous proposition, the surface $X_2$ is rational, and so $X_1$ is rational as well.
\end{proof}



\begin{rem}
We know of one other involution on the $\ZZ/4$-Godeaux surface, from \cite[\S4.2]{KL}. The automorphism $\tau\colon (x_1,x_2,x_3,y_1,y_3)\mapsto(x_3,x_2,x_1,y_3,y_1)$
acts on a subfamily of $\cM$, and the group generated by $\sigma$ and $\tau$ is $D_4$. The fixed curve is genus $1$ and there are five isolated fixed points. According to \cite[\S8.7]{CCML}, the quotient $X/\tau$ is a double plane of du Val type. It would be interesting to study the corresponding degenerations for this involution, and understand the behaviour of the double plane.
\end{rem} 


\Addresses


\begin{thebibliography}{99}

\bibitem{BCP} I. Bauer, F. Catanese, R. Pignatelli, \emph{Surfaces of general type with geometric genus zero: a survey}, Complex and differential geometry, 1--48, Springer Proc. Math., 8, Springer, Heidelberg, 2011.

\bibitem{CCML} A. Calabri, C. Ciliberto, M. Mendes Lopes, \emph{Numerical Godeaux surfaces with an involution}, Trans. Amer. Math. Soc. 359 (2007), no. 4, 1605--1632.

\bibitem{Dolg} I. Dolgachev, \emph{Algebraic surfaces with $q=p_g=0$}, Algebraic surfaces, 97--215, C.I.M.E. Summer Sch., 76, Springer, Heidelberg 2010 (Reprint of the 1981 first edition).

\bibitem{FPR1} M. Franciosi, R. Pardini, S. Rollenske, Log-canonical pairs and Gorenstein stable surfaces with $K_X^2=1$, Compos. Math. 151 (2015), no. 8, 1529--1542.

\bibitem{FPR2} M. Franciosi, R. Pardini, S. Rollenske, Computing invariants of semi-log-canonical surfaces, Math. Z. 280 (2015), no. 3, 1107--1123.

\bibitem{HTU} P. Hacking, J. Tevelev, G. Urz\'ua, \emph{Flipping surfaces}, arXiv:1310.1580, 2013, to appear in the Journal of Algebraic Geometry.

\bibitem{K92} Y. Kawamata, \emph{Moderate degenerations of algebraic surfaces}, Complex algebraic varieties (Bayreuth, 1990), 113--132, Lecture Notes in Math., 1507, Springer, Berlin, 1992.

\bibitem{K} A. Kazanova, \emph{Vector bundles on Godeaux surfaces}, arXiv:1402.0254, 2014.

\bibitem{KL} J.-H. Keum, Y. Lee, \emph{Fixed locus of an involution acting on a Godeaux surface}, Math. Proc. Cambridge Philos. Soc. 129 (2000), no. 2, 205--216.

\bibitem{KSB} J. Koll\'ar, N. Shepherd-Barron, Threefolds and deformations of surface singularities, Invent. Math. {\bf 91} (1988), 299--338.

\bibitem{LP07} Y. Lee, J. Park, \emph{A simply connected surface of general type with $p_g=0$ and $K^2=2$}, Invent. Math. {\bf 170} (2007), 483--505.

\bibitem{MLP} M. Mendes Lopes, R. Pardini, \emph{Godeaux surfaces with an Enriques involution and some stable degenerations}, arXiv:1502.04621, (2015).

\bibitem{Miy} Y. Miyaoka, \emph{Tricanonical maps of numerical Godeaux surfaces}, Invent. Math. 34 (1976), no. 2, 99--111.

\bibitem{RTU} J. Rana, J. Tevelev, G. Urz{\'u}a, \emph{The Craighero--Gattazzo surface is simply-connected}, arXiv:1506.03529, 2015, accepted in Compositio Mathematica.

\bibitem{R} M. Reid, \emph{Surfaces with $p_g=0$, $K^2=1$}, J. Fac. Sci. Univ. Tokyo Sect. IA Math. 25 (1978), 75--92.

\bibitem{Kinosaki} M. Reid, \emph{Graded rings and birational geometry}, Proc. of algebraic geometry symposium (Kinosaki, Oct 2000), K. Ohno (Ed.), 1--72, (available from \verb|homepages.warwick.ac.uk/|$\sim$\verb|masda/3folds/Ki/Ki.pdf|).

\bibitem{God3} M. Reid, \emph{Parallel unprojection equations for $\ZZ/3$ Godeaux surfaces}, (available from \verb|homepages.warwick.ac.uk/|$\sim$\verb|masda/codim4/God3.pdf|).

\bibitem{SU} A. Stern, G. Urz\'ua, \emph{KSBA surfaces with elliptic quotient singularities, $\pi_1=1$, $p_g=0$, and $K^2=1,2$}, Israel Journal of Mathematics 214(2016), 651–-673.

\bibitem{Urz} G. Urz{\'u}a, \emph{Identifying neighbors of stable surfaces}, arXiv:1310.4353, to appear in the Annali della Scuola Normale Superiore di Pisa.

\bibitem{Urz2} G. Urz{\'u}a, \emph{$\mathbb{Q}$-Gorenstein smoothings of surfaces and degenerations of curves}, arXiv:1311.4844, to appear in the Rendiconti del Seminario Matematico della Universit\`a di Padova.

\end{thebibliography}
\end{document}